\documentclass[12pt,a4paper]{amsart}
\usepackage{geometry}
\usepackage{amsmath,amssymb,amsfonts,amscd,amsthm,wasysym,color,enumitem,indentfirst,graphicx,booktabs,mathrsfs}
\usepackage[bookmarksnumbered,colorlinks,linktocpage,plainpages]{hyperref}
\hypersetup{pdfstartview={FitH}}
\usepackage{mathtools}
\usepackage{extarrows}
\usepackage{tikz-cd}

\numberwithin{equation}{section}
\newtheorem{theorem}{Theorem}[section]
\newtheorem{corollary}[theorem]{Corollary}
\newtheorem{lemma}[theorem]{Lemma}
\newtheorem{proposition}[theorem]{Proposition}

\theoremstyle{definition}
\newtheorem{definition}[theorem]{Definition}

\newtheorem{example}[theorem]{Example}

\newtheorem{remark}[theorem]{Remark}

\usepackage{tikz}

\allowdisplaybreaks[4]

\makeatletter
\@namedef{subjclassname@2020}{\textup{2020} Mathematics Subject Classification}
\makeatother

\date{}

\newcommand{\AAAAA}{{\mathbf{A}^\prime}}
\begin{document}
\title[A Koszul complex in quaternionic analysis and its applications]{A Koszul complex in quaternionic analysis and its applications}
\date{\today}

\author{Yong Li}
\address[Yong Li]{School of Mathematics and Statistics, Anhui Normal University, Wuhu 241002, Anhui, People's Republic of China}
\email{leeey@ahnu.edu.cn}

\author{Yuchen Zhang*}
\address[Yuchen Zhang]{Institute of Mathematics, Academy of Mathematics and Systems Sciences, Chinese Academy of Sciences, Beijing 100190, China}
\email[Y. Zhang]{yuchen95@amss.ac.cn}

\thanks{
$\ast$ Corresponding author.
\\
This work is supported by the National Natural Science Foundation of China (Grant No. 12501100), the Anhui Provincial Natural Science Foundation (Grant No. 2508085QA031) and
the University Natural Science Research Project of Anhui Province (Grant No. 2022AH050175).}

\subjclass[2020]{Primary 30G35 ; Secondary 32C35; 18G10}
\keywords{$k$-regular functions, Koszul complex, multiplication-like operators, extension of $k$-regular functions.}

\begin{abstract}
 Let $n\geqslant 1, \Omega\subset\mathbb{H}^n $ be a domain. We construct a Koszul-type complex for the ideal sheaf
$\mathcal{I}_X^{(k)}$ of $k$-regular functions vanishing on
$X=\{(q_0, q_1, \cdots, q_{n-1})\in \Omega: q_0=0\}$ in several quaternionic variables:
\[
0\to \mathcal{R}^{(k+2)}\xrightarrow{\widetilde{\mathscr{L}}^{(k)}}
\mathcal{R}^{(k+1)}\oplus\mathcal{R}^{(k+1)}\xrightarrow{\mathscr{L}^{(k)}}
\mathcal{I}_X^{(k)}\to 0,
\]
where $k\geqslant 0$,  $\mathcal{R}^{(k)}$ is the sheaf of $k$-regular functions on $\Omega$, $\widetilde{\mathscr{L}}^{(k)}\!=(-L_1^{(k+2)}\!,L_0^{(k+2)})^{T}$,
$\mathscr{L}^{(k)}\!=(L_0^{(k+1)}\!,L_1^{(k+1)})$, and $L_0^{(k)},L_1^{(k)}$ are
multiplication-like operators on $k$-regular functions.
This gives the quaternionic analogue of the classical Koszul complex.  And
we present the long exact sequence in cohomology for the case $\Omega\cap\{q_0=0\}=\emptyset$ with explicit differential connecting maps, by applying the Cauchy–Fueter complex and cohomological methods.

 As an application, in the special case $n=1, k=1$, the operator pair $(L_0^{(1)}, L_1^{(1)})$ is shown to be surjective if and only if $H^3(\Omega, \mathbb{R})=0$. Furthermore, a cohomological vanishing criterion is given for $H^1(\Omega,\mathcal{I}_X^{(k)})$; under this criterion, every $k$-regular function on $\{q_0=0\}\cap\Omega$ extends to a $k$-regular function on $\Omega$.
\end{abstract}

\maketitle
\tableofcontents
\section{Introduction}

The quaternion algebra $\mathbb{H}$, with its familiar basis $\{1, \mathbf{i}, \mathbf{j}, \mathbf{k}\}$ and multiplication rules $\mathbf{i}^2=\mathbf{j}^2=\mathbf{k}^2=\mathbf{ijk}=-1$, was introduced by Hamilton \cite{Hamilton66}. Quaternionic analysis, broadly understood as a generalization of complex analysis to the non-commutative setting, has since evolved into a rich subject encompassing several distinct but interrelated frameworks.

The most direct quaternionic counterpart of holomorphic function theory is the theory of \textbf{quaternionic regular functions}, initiated by Fueter \cite{Fueter35}. A differentiable function $f$ is called (left) regular if it is annihilated by the Cauchy--Fueter operator
\begin{equation*}
D = \frac{\partial}{\partial x_0}+\mathbf{i}\frac{\partial}{\partial x_1}+\mathbf{j}\frac{\partial}{\partial x_2}+\mathbf{k}\frac{\partial}{\partial x_3}.
\end{equation*}
Fueter showed that these functions enjoy many properties parallel to those of holomorphic functions --- notably Laurent expansions and a Cauchy integral formula \cite{Fueter35,Fueter36} --- and they have found numerous applications in theoretical physics \cite{Adler86,Lanczos29}. This theory has been extensively developed over the ensuing decades \cite{Sudbery79, CSSS04} and has been subsumed into the broader programme of Clifford analysis \cite{BDS82,Gilbert91,LY24,CSSS04}.

A natural higher-spin generalization arises from the \textbf{$k$-Cauchy--Fueter operator} $\mathscr{D}_0^{(k)}$ (see Definition~\ref{def:kCF}), which originates in physics as the elliptic incarnation of the spin-$k/2$ massless field equations on Minkowski spacetime \cite{CMW16,EPW80,PR84,PR86}. Functions annihilated by $\mathscr{D}_0^{(k)}$ are called \textbf{$k$-regular functions}; they reduce to Fueter's regular functions when $k=1$, while the case $k=0$ corresponds to harmonic functions. Thus the $k$-regular hierarchy interpolates between harmonic and regular functions, offering a flexible scale of regularity.

The extension of the $k$-Cauchy--Fueter operator to several quaternionic variables was carried out by Wang \cite{Wang10}, where the resulting $k$-regular functions simultaneously generalize Fueter's one-variable theory and the several-variable regular functions introduced earlier by Pertici \cite{Pertici88}. In this multivariable setting, there is a one-to-one correspondence between $1$-regular functions and quaternionic regular functions. Kang and Wang \cite{KW13} subsequently established a Taylor expansion for $k$-regular functions via the Penrose integral formula. The associated \textbf{Cauchy--Fueter complex} --- the quaternionic analogue of the Dolbeault complex --- has been studied by several authors \cite{ALPS97,ABLSS99,Wang10}, and a comprehensive treatment of $k$-Cauchy--Fueter operators and $k$-regular functions can be found in the series of works \cite{Wang10,Wang15,Wang19,Wang25,RZ23}.

The Koszul complex is a very important tool in algebra \cite{Buch64,BR64,GH78,CSSS04}. As an application in the theory of several complex variables, the Koszul complex provides a resolution of the ideal sheaf $\mathcal{I}_X$, where $X$ is an analytic subset of $\mathbb{C}^n$ and $\mathcal{I}_X$ is defined as the sheaf of  holomorphic functions vanishing on $X$. This resolution further shows that $\mathcal{I}_X$ is a coherent analytic sheaf, and the complex has many applications and generalizations in several complex variables \cite{GH78,Ji12,JY26}.

We illustrate with the example
\[
X=\{(z_0,z_1,\dots,z_{n-1})\in \mathbb{C}^n : z_0=z_1=0\}.
\]
Indeed, the Koszul complex tells us that we have the following exact sequence of sheaves:
\[
0 \longrightarrow \mathcal{O} \xrightarrow{(-z_1,z_0)^T} \mathcal{O}^2 \xrightarrow{(z_0,z_1)} \mathcal{I}_X \longrightarrow 0,
\]
where $\mathcal{O}$ is the sheaf of  holomorphic functions (see for example \cite[P.698]{GH78}).

We wish to obtain analogous results for quaternionic regular functions or $k$-regular functions. However, for a quaternionic regular function $f(q), q\in \mathbb{H}$, $qf$ is no longer quaternionic regular, so we cannot directly generalise the above result. However, Sudbery's result \cite{Sudbery79} tells us that in this case $qf$ is harmonic. Note that quaternionic regular functions $f$ are in one-to-one correspondence with $1$-regular functions, while harmonic functions are exactly $0$-regular functions. Inspired by this, Li and Zhang constructed a pair of multiplication operators $L_0,L_1$ in the one-variable $\mathbb{H}$ case, and proved that for a $k$-regular function $f$, both $L_0f$ and $L_1f$ are $(k-1)$-regular functions \cite{LZ26}.

In this paper, we further extend this pair of multiplication operators $L_0^{(k)}, L_1^{(k)}$ (see definition in \ref{def:L_0L_1 on k-CF}) to $\mathbb{H}^n$, and thereby obtain results analogous to the several complex variable case.

\begin{theorem}[Koszul resolution of $\mathcal{I}_X^{(k)}$] \label{thm:main}
Let $\Omega\subset\mathbb{H}^n$ be a domain and set
\[
X:=\{(q_0, q_1, \cdots, q_{n-1})\in \Omega: q_0=0\}.
\]
For every integer $k\geqslant 0$, the sequence
\begin{equation}\label{eq:Koszul complex}
0\longrightarrow \mathcal{R}^{(k+2)} \xrightarrow{\widetilde{\mathscr{L}}^{(k)}} \mathcal{R}^{(k+1)}\oplus \mathcal{R}^{(k+1)} \xrightarrow{\mathscr{L}^{(k)}} \mathcal{I}_X^{(k)} \longrightarrow 0
\end{equation}
is an exact sequence of sheaves on $\Omega$, where $\widetilde{\mathscr{L}}^{(k)}$ and $\mathscr{L}^{(k)}$ are defined by
\[
\widetilde{\mathscr{L}}^{(k)}:=(-L_1^{(k+2)}, L_0^{(k+2)})^T,\qquad
\mathscr{L}^{(k)}:=(L_0^{(k+1)}, L_1^{(k+1)}),
\]
respectively.
\end{theorem}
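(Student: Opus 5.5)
We argue stalkwise at each point $p\in\Omega$, using the two-component index formalism for $k$-regular functions: a $k$-regular function is a symmetric tensor $f_{A'_1\cdots A'_k}$ in the primed indices $A'_j\in\{0',1'\}$. The operators $L_0^{(k)},L_1^{(k)}$ contract one primed index against the variable $q_0$ written in $2\times2$ matrix form, $q_0\leftrightarrow (x^{0}_{AA'})$. Concretely, $L_{A}^{(k)} f=\sum_{A'} x^{0}_{AA'} f_{A'A'_2\cdots A'_k}$ up to symmetrisation, so each $L_A^{(k)}$ lowers $k$ by one. The same formalism shows that $L_0^{(k)}f$ and $L_1^{(k)}f$ are $(k-1)$-regular, exactly as in the one-variable case of \cite{LZ26}. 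Granting this, both maps in \eqref{eq:Koszul complex} are well defined, and the image of $\mathscr{L}^{(k)}$ lies in $\mathcal{I}_X^{(k)}$, since every term carries a factor of a coordinate of $q_0$.

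\textbf{Step 1 (complex and injectivity).} First we check that $\mathscr{L}^{(k)}\circ\widetilde{\mathscr{L}}^{(k)}=L_1^{(k+1)}L_0^{(k+2)}-L_0^{(k+1)}L_1^{(k+2)}=0$. In index form, $L_AL_B f$ equals $x^0_{AA'}x^0_{BB'}f_{A'B'\cdots}$, and this is symmetric in $A,B$ because $f$ is symmetric in $A',B'$. Hence the commutator vanishes.

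For injectivity of $\widetilde{\mathscr{L}}^{(k)}$, suppose $L_0f=L_1f=0$. Then $\sum_{A'}x^0_{AA'}f_{A'\cdots}=0$ for $A=0,1$. The matrix $(x^0_{AA'})$ is invertible whenever $q_0\neq0$, so $f$ vanishes off $X$. Since $X$ has codimension $4$, $f\equiv0$ by continuity.

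\textbf{Step 2 (exactness in the middle).} Suppose $L_0g_0+L_1g_1=0$ near $p$. Away from $X$ the germ relation is algebraic. Inverting $x^0$ via its adjugate, I would solve explicitly
\[
h=\frac{\text{(adjugate of }x^0)\text{ applied to }(g_0,g_1)}{|q_0|^2},
\]
which gives a candidate $h$ with $(-L_1h,L_0h)=(g_0,g_1)$. The proof then needs to show that $h$ is $(k+2)$-regular and extends across $X$. Near points $p\notin X$ this works directly, and regularity of $h$ follows by applying $\mathscr{D}_0$ together with the commutation relations between $\mathscr{D}_0$ and $L_A$.

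Near $p\in X$, I would instead use the Taylor expansion of $k$-regular functions in homogeneous $k$-regular polynomials in $q_0$ (Kang--Wang). Solving degree by degree reduces the problem to a finite-dimensional exactness statement for $\mathrm{SL}(2)$-representations: the Koszul complex of the two linear forms acting on the space of polynomials tensored with symmetric powers. A Clebsch--Gordan decomposition shows exactness, and convergence follows from Cauchy estimates.

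\textbf{Step 3 (surjectivity onto $\mathcal{I}_X^{(k)}$).} Given $f$ that is $k$-regular and vanishes on $X$, we expand it near $p\in X$ in $q_0$-homogeneous components. Each component of degree $m\geq1$ vanishing on $X$ must be written as $L_0g_0+L_1g_1$ with $(g_0,g_1)$ homogeneous of degree $m-1$ and $(k+1)$-regular. This is again a representation-theoretic statement: the homogeneous $k$-regular polynomials of degree $m$ form the irreducible piece $S^{m}\otimes S^{m+k}$, and the map from $(S^{m-1}\otimes S^{m+k})^{2}$ surjects onto it by Clebsch--Gordan. Summing and estimating the coefficients gives convergent $g_0,g_1$. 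The dependence on the other variables $q_1,\dots,q_{n-1}$ is carried along as parameters. This step is the main obstacle. The degree-wise surjectivity must preserve regularity in all variables jointly, since the operator $\mathscr{D}_0$ couples $q_0$ with the other $q_j$. I would first try to choose the canonical preimage given by the adjugate formula, which commutes with the $\mathrm{SL}(2)$ action, and verify that it respects the full multi-variable $k$-Cauchy--Fueter equations.
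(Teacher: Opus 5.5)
Your Steps 1 and 2 are sound. Off $X$, your route to regularity of $h$ (apply $\mathscr{D}$, commute it past $L_A$, then use injectivity of $(-L_1,L_0)$ on form-valued functions) is even slightly slicker than the paper's direct computation. The genuine gap is Step 3, in two places.

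\textbf{Surjectivity off $X$.} You never prove surjectivity of $\mathscr{L}^{(k)}$ on stalks at points $p\notin X$, where $\mathcal{I}_X^{(k)}=\mathcal{R}^{(k)}$. Unlike the holomorphic case, inverting the coefficient matrix does not preserve regularity. The pointwise preimage
$\eta(f)_s=\frac{1}{2|q_0|^2}\bigl(-z_1^{0'}\eta(h)_{s-1}+z_1^{1'}\eta(h)_s\bigr)$, $\eta(g)_s=\frac{1}{2|q_0|^2}\bigl(z_0^{0'}\eta(h)_{s-1}-z_0^{1'}\eta(h)_s\bigr)$
satisfies $L_0^{(k+1)}f+L_1^{(k+1)}g=h$, but it is in general \emph{not} $(k+1)$-regular. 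So the ``canonical adjugate preimage'' you propose to try is exactly what fails. The paper corrects it to $(f+L_1^{(k+2)}w,\,g-L_0^{(k+2)}w)$, in three moves:
\begin{enumerate}
\item Since $L_0^{(k)}\mathscr{D}_0^{(k+1)}f+L_1^{(k)}\mathscr{D}_0^{(k+1)}g=\mathscr{D}_0^{(k)}h=0$, the pointwise part of the middle-exactness argument, applied to form-valued functions, gives $v$ with $-L_1^{(k+1)}v=\mathscr{D}_0^{(k+1)}f$ and $L_0^{(k+1)}v=\mathscr{D}_0^{(k+1)}g$.
\item Injectivity then yields $\mathscr{D}_1^{(k+2)}v=0$.
\item One solves $\mathscr{D}_0^{(k+2)}w=v$ on a convex neighbourhood using $H^1(U,\mathcal{R}^{(k+2)})=0$. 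The case $k=0$ needs a separate explicit computation, since $D^{(0)}$ is unavailable.
\end{enumerate}
This solvability input for the $k$-Cauchy--Fueter complex is missing from your plan. Without it, the stalk map is not shown to be onto off $X$.

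\textbf{Surjectivity at $X$ for $n\geqslant 2$.} Here you explicitly leave joint regularity open. Strictly speaking, $\mathscr{D}_0^{(k)}$ does not couple derivatives in different $q_t$: its $\omega^A$-components with $A\in\{2t,2t+1\}$ involve only $q_t$. But all these systems act on the same primed indices, so a preimage chosen slice-by-slice in $q_0$ need not satisfy the $q_t$-equations for $t\geqslant 1$.

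The missing idea is to expand in \emph{all} variables using the Kang--Wang basis $P^{\mathbf m}_{s,j}$, $\mathbf m\in\mathbb{Z}^{2n}_{\geqslant 0}$. For every fixed $\mathbf m$ and $j$, the vector $(P^{\mathbf m}_{s,j})_s$ is regular. On these vectors, $L_0$ acts by $\mathbf m\mapsto\mathbf m+\mathbf e_1$, and $L_1$ acts by $\mathbf m\mapsto\mathbf m+\mathbf e_0$ with a minus sign, with $j$ unchanged. Vanishing on $X$ kills exactly the coefficients with $m_0=m_1=0$. Every remaining basis vector is then $L_0$ or $-L_1$ of a lower-degree basis vector, which is automatically $(k+1)$-regular in all variables. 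The coefficient estimates give convergence on $B_0(\varepsilon^2)$.

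Invariantly: the degree-$m$ $k$-regular polynomials can be identified with $S^m(\mathbb{C}^{2n})\otimes S^{m+k}(\mathbb{C}^2)$, and $(L_0,L_1)$ becomes multiplication by two coordinate functions on the first factor. The ordinary polynomial Koszul complex therefore does the work jointly in all variables, and no equivariant section or adjugate formula is needed.
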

Here $\mathcal{R}^{(k)}$ denotes the sheaf of  $k$-regular functions on $\Omega$, and $\mathcal{I}_X^{(k)}$ is defined as the sheaf of  $k$-regular functions vanishing on $X$.

The proof of this theorem is divided into two parts, for $x\in X$ and $x\notin X$. The proof for $x\notin X$ is similar to the treatment of  Theorem 1.2 in \cite{LZ26}. For the case $x\in X$, we need a theorem analogous to Hefer's Lemma in several complex variables. In fact, using the Taylor expansion of $k$-regular functions, we prove:
\begin{theorem}
Let $0<\varepsilon\ll 1$, and $$B_0(\varepsilon)=\{(q_0,\cdots, q_{n-1})\in \mathbb{H}^n: \sqrt{\left|q_0\right|^2+\left|q_1\right|^2+\cdots\left|q_{n-1}\right|^2}<\varepsilon \}.$$
If $h\in \mathcal{R}^{(k)}(B_0(\varepsilon))$ and $h|_{\{q_0=0\}}=0$, then there exist $f,g\in \mathcal{R}^{(k+1)}(B_0(\varepsilon^2))$ satisfying $L_0^{(k)}f + L_1^{(k)}g = h$  on $ B_0(\varepsilon^2)$.

\end{theorem}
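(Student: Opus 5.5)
We argue as in the proof of Hefer's lemma: expand $h$ in its Taylor series, divide termwise by $q_0$, and sum.

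\textbf{Step 1: Taylor expansion.} By the Taylor expansion of $k$-regular functions (Kang--Wang, via the Penrose integral formula), $h$ has on $B_0(\varepsilon)$ an expansion
\[
h=\sum_{m\geqslant 0} P_m ,
\]
where each $P_m$ is a homogeneous $k$-regular polynomial of degree $m$. The expansion converges normally on smaller balls, with Cauchy-type estimates
\[
\sup_{B_0(r)}|P_m|\leqslant C\,(r/\varepsilon')^m ,\qquad \varepsilon'<\varepsilon .
\]
Restricting the expansion to $\{q_0=0\}$ and using homogeneity, the hypothesis $h|_{\{q_0=0\}}=0$ gives $P_m|_{\{q_0=0\}}=0$ for every $m$.

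\textbf{Step 2: Graded division.} The key algebraic step is a division statement at the level of polynomials. If $P$ is a homogeneous $k$-regular polynomial of degree $m$ vanishing on $\{q_0=0\}$, we want homogeneous $(k+1)$-regular polynomials $F,G$ of degree $m-1$ with
\[
L_0^{(k)}F+L_1^{(k)}G=P,
\]
together with a norm bound
\[
\|F\|+\|G\|\leqslant C\,m^{N}\|P\|
\]
on the unit ball, for some fixed $N$.

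For existence, consider the map
\[
(L_0,L_1)\colon \mathcal{P}^{(k+1)}_{m-1}\oplus\mathcal{P}^{(k+1)}_{m-1}\to \mathcal{P}^{(k)}_{m}\cap\{\text{vanishing on } q_0=0\}
\]
between finite-dimensional spaces. I would prove it is surjective in two ways. First, I would use the one-variable result of \cite{LZ26} in the variable $q_0$, with $q_1,\dots,q_{n-1}$ as parameters. Second, I would compare dimensions: the kernel is the image of $\widetilde{\mathscr L}$ applied to $\mathcal P^{(k+2)}_{m-2}$, which mirrors the complex algebraic Koszul argument. Once surjectivity holds, fix a right inverse, for instance the minimal-norm solution in an $L^2$ (Fischer-type) inner product on the unit sphere.

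\textbf{Step 3: Summation.} Set $f=\sum_m F_m$ and $g=\sum_m G_m$. Each homogeneous piece is $(k+1)$-regular. Combining the polynomial growth $m^N$ with the geometric decay from Step 1, both series converge uniformly on compacta of $B_0(\varepsilon^2)$; the shrinkage from $\varepsilon$ to $\varepsilon^2$ leaves ample room for this since $\varepsilon<1$. Uniform limits of $(k+1)$-regular functions are $(k+1)$-regular, because $\mathscr D_0^{(k+1)}$ is elliptic. The operators $L_0,L_1$ are first-order multiplication-like, so they commute with the locally uniform limit of the polynomials and their derivatives, and therefore
\[
L_0^{(k)}f+L_1^{(k)}g=h \quad\text{on } B_0(\varepsilon^2).
\]

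\textbf{Main obstacle.} I expect Step 2 to be the main obstacle, in particular surjectivity for homogeneous polynomials in several quaternionic variables together with a polynomial-in-$m$ bound on the right inverse. I would try first to write $P=q_0$-divided data explicitly from the Taylor coefficients in the symmetric-spinor form of $k$-regular polynomials. In that form $L_0,L_1$ should act as multiplication by the two spinor components $z_{00'},z_{10'}$ of $q_0$ on symmetrized indices, which reduces the problem to the classical polynomial Koszul division.
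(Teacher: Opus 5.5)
\textbf{Verdict: there is a genuine gap in Step 2.} Your architecture (expand $h$, divide degree by degree, sum) matches the paper's. But Step 2 carries the whole proof, and neither of your two routes establishes it.

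\begin{itemize}
\item \emph{The one-variable route fails.} The Li--Zhang one-variable result concerns domains in $\mathbb{R}^4$ that avoid the origin. It therefore says nothing about polynomials vanishing at $q_0=0$. Indeed, the case $0\in\Omega$ of Theorem~\ref{thm:2} is new here and is deduced from the very statement you are proving. Even granting a one-variable division, freezing $q_1,\dots,q_{n-1}$ only yields the equations of $\mathscr{D}_0^{(k+1)}$ that involve $\nabla_0^{A'},\nabla_1^{A'}$, not those with $A\geqslant 2$. Since the solution is non-unique (the ambiguity is $\operatorname{Im}\widetilde{\mathscr L}^{(k)}$), nothing forces the remaining equations.
\item \emph{The dimension count is not established.} It presupposes exactness in the middle of the Koszul sequence at the polynomial level, injectivity of $\widetilde{\mathscr L}^{(k)}$ on polynomials, and an explicit dimension formula for degree-$m$ $k$-regular polynomials on $\mathbb{H}^n$. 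You prove none of these, and the first is itself a division problem of the same kind as the one you are trying to solve.
\item \emph{The $m^N$ bound is only asserted.} A bound on a minimal-norm right inverse amounts to a polynomial lower bound on the smallest nonzero singular value of $(L_0,L_1)$ in each degree. You give no mechanism for this, and Step 3 depends on it entirely.
\end{itemize}

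\textbf{The missing idea is the one in your last paragraph; it should be the proof, not a fallback.}
\begin{itemize}
\item \emph{Expansion.} Expand $\eta(h)_s=\sum_{\mathbf m,j}C^j_{\mathbf m}P^{\mathbf m}_{s,j}$ in the Kang--Wang polynomials. Here $P^{\mathbf m}_{s,j}$ is the coefficient of $r^{j-s-1}$ in $\prod_A(z^{A0'}+z^{A1'}r)^{m_A}$. For fixed $(\mathbf m,j)$, the consecutive coefficients $s=0,\dots,k+1$ form a $(k+1)$-regular polynomial.
\item \emph{Action of $L_0,L_1$.} By Lemma~\ref{lem:L action on P sj}, $L_0$ acts by $\mathbf m\mapsto\mathbf m+\mathbf e_1$ and $L_1$ by $\mathbf m\mapsto\mathbf m+\mathbf e_0$, the latter with a minus sign. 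Equivalently, they multiply the generating polynomial by $z^{10'}+z^{11'}r$ and by $-(z^{00'}+z^{01'}r)$ respectively.
\item \emph{Division.} The division is then literally monomial Koszul division in the index $\mathbf m$. Vanishing on $X$ gives $C^j_{\mathbf m}=0$ whenever $m_0=m_1=0$. Put the terms with $m_1\geqslant 1$ into $f$ at index $\mathbf m-\mathbf e_1$. Put the terms with $m_1=0$, $m_0\geqslant 1$ into $g$ at index $\mathbf m-\mathbf e_0$, with a sign.
\item \emph{Convergence.} Because the division is coefficientwise, no right-inverse bound is needed. Convergence follows from the Cauchy-type estimate $|C^j_{\mathbf m}|\lesssim (K/\varepsilon)^{|\mathbf m|}$, with $K$ depending on $n$, together with $|P^{\mathbf m}_{s,j}|\lesssim (c\,\varepsilon^2)^{|\mathbf m|}$ on $B_0(\varepsilon^2)$ (Lemma~\ref{lem:estimate of C mj and P m sj}).
\item \emph{Why $\varepsilon^2$ and $\varepsilon\ll1$.} The loss here is exponential in $|\mathbf m|$, not polynomial. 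That is exactly why the radius drops to $\varepsilon^2$ and why $\varepsilon\ll 1$ (depending on $n$) is required, rather than merely $\varepsilon<1$ as your Step 3 suggests.
\end{itemize}
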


It is easy to see that this is a generalisation of the following Hefer's Lemma.
\begin{theorem}[Hefer's lemma{\cite[P.244]{Kra92}}]
  Let $\Omega \subseteq \mathbb{C}^2$ be pseudoconvex. Assume that $0\in \Omega$. If $f:\Omega\to \mathbb{C} $ is a holomorphic function that satisfies $f(0)=0$, then there are holomorphic functions $f_1, f_2$ on $\Omega$ such that $f=z_1f_1+z_2f_2$ on $\Omega$.
\end{theorem}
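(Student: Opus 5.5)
The plan is to treat Hefer's lemma as a global-sections statement for the complex-analytic Koszul complex, and to pass from local to global by the vanishing of $H^1(\Omega,\mathcal{O})$ on pseudoconvex domains. This is the same local/global structure that Theorem~\ref{thm:main} and its local Hefer-type companion are modelled on. Let $\mathcal{I}_0\subset\mathcal{O}$ be the ideal sheaf of the point $0$. Consider the sequence of sheaves on $\Omega$
\[
0\longrightarrow \mathcal{O}\xrightarrow{(-z_2,z_1)^T}\mathcal{O}^2\xrightarrow{(z_1,z_2)}\mathcal{I}_0\longrightarrow 0 .
\]
The first step is to check that this sequence is exact stalkwise.

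\emph{Surjectivity on stalks.} Away from $0$, at a point $p$ some $z_j(p)\neq 0$, say $z_1(p)\ne0$. A germ $f$ is then $z_1\cdot(f/z_1)+z_2\cdot 0$. At $0$ we use Hadamard's trick on a small ball:
\[
f(z)=f(z)-f(0,z_2)+f(0,z_2)-f(0,0)
=z_1\!\int_0^1\!\partial_1 f(tz_1,z_2)\,dt+z_2\!\int_0^1\!\partial_2 f(0,tz_2)\,dt .
\]
Both integrands are holomorphic because the ball is convex.

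\emph{Exactness in the middle and injectivity.} Injectivity of $(-z_2,z_1)^T$ is clear. For the middle, suppose $z_1g_1+z_2g_2=0$ near a point. Away from $0$ we may divide by whichever $z_j$ is nonzero: if $z_1(p)\neq0$, then $h=g_2/z_1$ gives $g_1=-z_2h$ and $g_2=z_1h$. At $0$ we use that $z_1,z_2$ are coprime in $\mathcal{O}_0$, a UFD. From $z_2\mid z_1g_1$ we get $z_2\mid g_1$; writing $g_1=-z_2h$ forces $g_2=z_1h$. Alternatively, $g_1(0,z_2)=0$ and Hadamard in $z_2$ gives the same.

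Next comes the passage to global sections. The long exact cohomology sequence gives
\[
H^0(\Omega,\mathcal{O}^2)\to H^0(\Omega,\mathcal{I}_0)\to H^1(\Omega,\mathcal{O}).
\]
Since $\Omega$ is pseudoconvex in $\mathbb{C}^2$, it is a domain of holomorphy, hence Stein. Therefore $H^1(\Omega,\mathcal{O})=0$ (Cartan B, or Hörmander's $L^2$ solution of $\bar\partial$ plus Dolbeault). Our global $f$ lies in $H^0(\Omega,\mathcal{I}_0)$ because $f(0)=0$, so it lifts to $(f_1,f_2)$.

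To make the argument concrete, and avoid citing Theorem B in full, I would run the Čech step by hand with the three-set cover $U_0=B(0,r)$, $U_1=\{z_1\ne0\}$, $U_2=\{z_2\ne0\}$. The local solutions are the Hadamard pair on $U_0$, $(f/z_1,0)$ on $U_1$, and $(0,f/z_2)$ on $U_2$. On overlaps their differences are, by the middle exactness just proved, of the form $(-z_2,z_1)h_{ij}$ with $h_{ij}$ holomorphic. The family $\{h_{ij}\}$ is a Čech $1$-cocycle with values in $\mathcal{O}$, and by injectivity of $(-z_2,z_1)^T$ the cocycle identity holds. Vanishing of $H^1$ lets us write $h_{ij}=h_j-h_i$. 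Correcting each local solution by $(-z_2,z_1)h_i$ then glues them to a global pair $(f_1,f_2)$ with $f=z_1f_1+z_2f_2$.

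The main obstacle is precisely this global step: it is the only place pseudoconvexity enters, through $H^1(\Omega,\mathcal{O})=0$. If a self-contained route is preferred, I would convert the cocycle into a $\bar\partial$-problem via a partition of unity. This produces a $\bar\partial$-closed smooth $(0,1)$-form on $\Omega$, and Hörmander's theorem on pseudoconvex domains solves $\bar\partial u=$ that form with $u$ smooth. The stalkwise algebra is routine by comparison.
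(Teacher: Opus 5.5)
Your argument is correct. The paper only quotes this lemma from Krantz without proof, but your route is exactly the template the paper itself uses for its quaternionic analogue: stalkwise exactness of the Koszul sequence $0\to\mathcal{O}\to\mathcal{O}^2\to\mathcal{I}_0\to0$, with Hadamard's trick playing the role of the Taylor-series splitting in the proof of Theorem~\ref{thm:main}, followed by the long exact sequence and the vanishing of $H^1(\Omega,\mathcal{O})$, as in the proof of Theorem~\ref{thm:2}. One harmless slip: in your alternative middle-exactness argument, $z_1g_1+z_2g_2=0$ gives $g_1(z_1,0)=0$ (not $g_1(0,z_2)=0$), and that is what Hadamard in $z_2$ actually needs.
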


Li and Zhang \cite{LZ26} first obtained a quaternionic generalization of Hefer's lemma:

\begin{theorem}[{\cite[Theorem~1.2]{LZ26}}]
Let $\Omega\subset\mathbb{R}^4$ be a domain with $H^3(\Omega,\mathbb{R})=0$
and $0\notin\Omega$. Then for any harmonic function $h\in C^\infty(\Omega,\mathbb{C})$
there exists a pair of $1$-regular functions $f,g$ such that
$h = L_0^{(1)}f + L_1^{(1)}g$.
\end{theorem}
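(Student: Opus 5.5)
The plan is to first solve $h=L_0^{(1)}f+L_1^{(1)}g$ with smooth, non-regular $f,g$, using $0\notin\Omega$. Then I would correct $(f,g)$ by an element of the kernel of $(L_0^{(1)},L_1^{(1)})$ so that both components become $1$-regular. The topological hypothesis $H^3(\Omega,\mathbb{R})=0$ should enter exactly once, when a closed obstruction has to be shown exact.

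\textbf{Step 1 (smooth division).} In two-component notation, $L_0^{(1)}f+L_1^{(1)}g$ is a contraction of $(f,g)$ against the coordinate matrix $(z^{AA'})$ of $q$. Since $q\neq 0$ on $\Omega$, this matrix is invertible with inverse $\bar q/|q|^2$. So a pointwise algebraic choice such as $(f_0,g_0)=\bar q\,h/|q|^2$, arranged componentwise, gives smooth $f_0,g_0$ with $L_0^{(1)}f_0+L_1^{(1)}g_0=h$.

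\textbf{Step 2 (the kernel).} The pointwise kernel of $(f,g)\mapsto L_0^{(1)}f+L_1^{(1)}g$ should be parametrized as $(f,g)=(-L_1^{(2)}\varphi,\,L_0^{(2)}\varphi)$ with $\varphi$ a smooth $\mathrm{Sym}^2\mathbb{C}^2$-valued function. This is the algebraic (pointwise) content of the exactness of \eqref{eq:Koszul complex} in degree one, and it holds again because $q$ is invertible. I would therefore look for $f=f_0-L_1^{(2)}\varphi$ and $g=g_0+L_0^{(2)}\varphi$ with $\mathscr{D}_0^{(1)}f=\mathscr{D}_0^{(1)}g=0$. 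Applying $\mathscr{D}_0^{(1)}$ and commuting it past the multiplication-like operators turns this into an equation of the form
\[
\mathscr{D}\varphi=\Phi(h,f_0,g_0),
\]
for a first-order operator $\mathscr{D}$ of Cauchy--Fueter type acting on $\mathrm{Sym}^2$-valued functions. Here $\Phi$ is explicit and smooth.

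\textbf{Step 3 (compatibility and solvability).} Using $\Delta h=0$, I would check that $\Phi$ satisfies the compatibility condition of the next operator in the one-variable Cauchy--Fueter complex, so that $\Phi$ is closed. I would then identify the relevant cohomology of this complex on $\Omega$ with de Rham cohomology. In one quaternionic variable the last nontrivial group of the $k$-Cauchy--Fueter complex is controlled by $H^3(\Omega,\mathbb{R})$: via the Hodge star, $\Phi$ corresponds to a closed $3$-form. Since $H^3(\Omega,\mathbb{R})=0$, that form is $d$ of a $2$-form, and this yields a smooth solution $\varphi$. Substituting $\varphi$ into Step 2 gives $1$-regular $f,g$.

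\textbf{Main obstacle.} I expect the hardest part to be this last identification. One has to compute precisely how $\mathscr{D}_0^{(1)}$ commutes with $L_0^{(2)},L_1^{(2)}$; the commutator should produce lower-order terms that cancel by symmetrization. One must then verify that the resulting obstruction is genuinely a closed $3$-form, rather than merely closed with respect to some auxiliary operator, so that the vanishing of $H^3$, and not some finer invariant, suffices. I would first carry out the commutator computation in spinor indices, reducing $\mathscr{D}\varphi=\Phi$ to $\mathrm{div}$- or $d^*$-type equations. In the worst case I would solve the equation locally on a cover by balls and glue the local solutions through a Čech--de Rham argument, using $H^3=0$ to kill the gluing cocycle.
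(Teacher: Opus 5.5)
Your plan is the argument the paper uses. It is Step~3 of the case $x\notin X$ with $k=0$ in Section~\ref{section:Koszul complex}: pointwise division by $|q_0|^2$, correction by $(L_1^{(2)}w,-L_0^{(2)}w)$, then reduction to $\mathscr D_0^{(2)}w=v$ with $\mathscr D_1^{(2)}v=0$. The only change is that it is run on all of $\Omega$ instead of on a convex $U$. Your $\Phi$ is, up to sign, that $v$, i.e.\ a representative of the connecting class $s_0^{(0)}h\in H^1(\Omega,\mathcal R^{(2)})$. The paper obtains the statement, as part of Theorem~\ref{thm:2}, from exactness of $H^0(\Omega,\mathcal R^{(1)}\oplus\mathcal R^{(1)})\to H^0(\Omega,\mathcal R^{(0)})\to H^1(\Omega,\mathcal R^{(2)})$. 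It combines this with the equivalence $H^1(\Omega,\mathcal R^{(2)})=0\iff H^3(\Omega,\mathbb R)=0$, quoted from \cite{LZ26}. Two details in your sketch need adjusting:
\begin{enumerate}
\item By Proposition~\ref{prop:Lj and k CF}(1), the commutation $D^{(1)}L_A^{(2)}=L_A^{(1)}D^{(2)}$ is exact, so no lower-order terms appear.
\item For your explicit $f_0,g_0$, the closedness $\mathscr D_1^{(2)}\Phi=0$ also uses that $|q|^{-2}$ is harmonic on $\mathbb R^4\setminus\{0\}$, not only $\Delta h=0$.
\end{enumerate}

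The step that does not go through as written is the last one. Under the Hodge-star dictionary, $\odot^2\mathbb C^2$-valued functions correspond only to the $2$-forms of one duality type, say the self-dual ones. Then $\mathscr D_0^{(2)}$ is $d$ restricted to these forms, and $\mathscr D_1^{(2)}$ is $d$ on $3$-forms. So $H^3(\Omega,\mathbb R)=0$ gives $\Phi=d\beta$ for some smooth complex $2$-form $\beta$. But to produce $\varphi$ you need a self-dual primitive, and in general $d\beta\neq d\beta_+$.

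The missing fact is that on every domain, each exact $3$-form has a self-dual primitive. This requires a global solvability argument, for example:
\begin{enumerate}
\item Solve $\Delta\mu=2\beta_-$ on $\Omega$ with $\mu$ anti-self-dual. Do this componentwise in a constant basis of $\Lambda^2_-$; the Laplacian is surjective on $C^\infty(\Omega)$ for every open $\Omega$ by Malgrange's theorem.
\item Set $\theta=d^*\mu$. For anti-self-dual $\mu$ one has $(d\theta)_-=\tfrac12\Delta\mu=\beta_-$.
\item Then $d\beta=d\bigl(\beta_+-(d\theta)_+\bigr)$, which is $d$ of a self-dual form.
\end{enumerate}
This is exactly the content of the result from \cite{LZ26} that the paper quotes, and with it your argument is complete. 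Your \v{C}ech fallback would not avoid this point. The gluing cocycle takes values in $\mathcal R^{(2)}$, so killing it requires $H^1(\Omega,\mathcal R^{(2)})=0$, not $H^3(\Omega,\mathbb R)=0$ directly.
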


As a first application of Theorem~\ref{thm:main}, we strengthen the above result
to an \emph{if and only if} statement and also treat the case $0\in\Omega$.

\begin{theorem}\label{thm:2}
Let $\Omega\subset\mathbb{R}^4$ be a domain.
\begin{itemize}
\item If $0\notin\Omega$, then $H^3(\Omega,\mathbb{R})=0$ if and only if for every harmonic function $h\in C^\infty(\Omega,\mathbb{C})$ there exist $1$-regular functions $f,g$ such that $h=L_0^{(1)}f+L_1^{(1)}g$.
\item If $0\in\Omega$, then $H^3(\Omega,\mathbb{R})=0$ if and only if for every harmonic function $h\in C^\infty(\Omega,\mathbb{C})$ with $h(0)=0$ there exist $1$-regular functions $f,g$ such that $h=L_0^{(1)}f+L_1^{(1)}g$.
\end{itemize}
\end{theorem}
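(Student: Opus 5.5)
We work in the case $n=1$, $k=0$ of Theorem~\ref{thm:main}. There $\mathcal{R}^{(0)}$ is the sheaf of harmonic functions, $X=\{0\}\cap\Omega$, and $\mathcal{I}_X^{(0)}$ is the sheaf of harmonic functions vanishing at $0$ (or all harmonic functions if $0\notin\Omega$). Theorem~\ref{thm:main} gives the short exact sequence of sheaves
\[
0\to \mathcal{R}^{(2)}\xrightarrow{\widetilde{\mathscr{L}}^{(0)}}\mathcal{R}^{(1)}\oplus\mathcal{R}^{(1)}\xrightarrow{\mathscr{L}^{(0)}}\mathcal{I}_X^{(0)}\to 0 ,
\]
whose second map is $(f,g)\mapsto L_0^{(1)}f+L_1^{(1)}g$. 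The surjectivity statements in both bullets say exactly that the induced map on global sections $H^0(\Omega,\mathcal{R}^{(1)})^2\to H^0(\Omega,\mathcal{I}_X^{(0)})$ is onto. Here we use that $\Gamma(\Omega,\mathcal{I}_X^{(0)})$ consists of the harmonic $h$ with $h(0)=0$ when $0\in\Omega$. The long exact cohomology sequence then identifies the cokernel of this map with
\[
\ker\bigl(H^1(\Omega,\mathcal{R}^{(2)})\to H^1(\Omega,\mathcal{R}^{(1)})^2\bigr).
\]
So the plan is to compute these sheaf cohomology groups for domains in $\mathbb{R}^4$ using fine resolutions.

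The key input is the Cauchy--Fueter complex. In one quaternionic variable, $\mathcal{R}^{(k)}$ has a fine resolution by smooth functions, $0\to\mathcal{R}^{(k)}\to C^\infty\otimes\odot^k\xrightarrow{\mathscr{D}_0^{(k)}}C^\infty\otimes\odot^{k-1}\otimes\mathbb{C}^2\to\cdots$, of length $1$ for $k\ge 1$. I will use the standard facts that the $k$-Cauchy--Fueter operator is elliptic and surjective on $C^\infty$ over any open set in $\mathbb{R}^4$: for an overdetermined-free, determined or underdetermined elliptic first-order system one solves $\mathscr{D}_0^{(k)}u=v$ via $u=(\mathscr{D}_0^{(k)})^*w$ with $\Delta w=v$ componentwise, and $\Delta$ is surjective on $C^\infty(\Omega)$ by Malgrange. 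This gives $H^1(\Omega,\mathcal{R}^{(k)})=0$ for every $k\ge1$. That would make the kernel above equal to $H^1(\Omega,\mathcal{R}^{(2)})=0$, which is inconsistent with the expected $H^3$ condition.

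Hence I must re-examine the resolution. For $k\ge 2$ in one variable, the $k$-Cauchy--Fueter complex has the form $\odot^k\to\odot^{k-1}\otimes\mathbb{C}^2$ with rank $k+1\to 2k$, so it is overdetermined. The resolution is then longer, $0\to\mathcal{R}^{(k)}\to\mathcal{E}^{0}\to\mathcal{E}^{1}\to\mathcal{E}^{2}\to\mathcal{E}^{3}\to0$, and its last cohomology on $\Omega\subset\mathbb{R}^4$ relates to de Rham-type cohomology. Concretely, I expect $H^1(\Omega,\mathcal{R}^{(2)})\cong H^3(\Omega,\mathbb{C})$, since the $2$-Cauchy--Fueter complex is a twisted form of the self-dual de Rham (Maxwell-type) complex. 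I also expect $H^1(\Omega,\mathcal{R}^{(1)})=0$, because the regular case is determined elliptic. With these, the long exact sequence gives
\[
\operatorname{coker}\cong H^1(\Omega,\mathcal{R}^{(2)})\cong H^3(\Omega,\mathbb{C}),
\]
which yields both bullets at once. The vanishing of $H^3(\Omega,\mathbb{R})$ is equivalent to the vanishing of $H^3(\Omega,\mathbb{C})$.

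The main obstacle is the identification $H^1(\Omega,\mathcal{R}^{(2)})\cong H^3(\Omega,\mathbb{C})$. I would prove it as follows. Take the fine resolution given by the $2$-Cauchy--Fueter complex. Show that $\mathscr{D}^{(2)}_0$ and the intermediate operators are surjective where ellipticity permits. Then compare the $H^1$ term, namely $\ker\mathscr{D}_1/\operatorname{im}\mathscr{D}_0$ on smooth sections, with the de Rham complex via an explicit chain map, using the known relation of $\mathscr{D}^{(2)}$ with $d$ on anti-self-dual forms. For the converse direction, given a nonzero class in $H^3$, I would produce explicitly the harmonic function $h$ via the connecting map $\delta$. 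By exactness this $h$ is not of the form $L_0^{(1)}f+L_1^{(1)}g$. When $0\in\Omega$ I subtract the constant $h(0)$, which is in the image trivially since constants are $L_0f+L_1g$ of suitable linear regular $f,g$.
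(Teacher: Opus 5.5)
Your argument is the paper's. The long exact sequence of the Koszul resolution (Theorem~\ref{thm:main} with $n=1$, $k=0$), together with $H^1(\Omega,\mathcal{R}^{(1)})=0$, identifies the cokernel of $(L_0^{(1)},L_1^{(1)})$ on global sections with $H^1(\Omega,\mathcal{R}^{(2)})$. The paper then quotes $H^1(\Omega,\mathcal{R}^{(2)})=0\iff H^3(\Omega,\mathbb{R})=0$ from \cite{LZ26}. You instead propose to prove the sharper $H^1(\Omega,\mathcal{R}^{(2)})\cong H^3(\Omega,\mathbb{C})$ by identifying the one-variable $2$-Cauchy--Fueter complex with $C^\infty(\Lambda^2_{\pm})\xrightarrow{d}C^\infty(\Lambda^3)\xrightarrow{d}C^\infty(\Lambda^4)\to 0$; note it has three terms, not four.

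That isomorphism is true, but the ingredient that makes it work is not surjectivity of $\mathscr{D}_0^{(2)}$. That operator is overdetermined ($3\to 4$) and never onto, which is also why your first-paragraph claim that $H^1(\Omega,\mathcal{R}^{(k)})=0$ for all $k$ fails. What you need is $d\,C^\infty(\Omega,\Lambda^2_{\pm})=d\,C^\infty(\Omega,\Lambda^2)$. This holds because the $\Lambda^2_{\mp}$-component $d^{\mp}\colon C^\infty(\Omega,\Lambda^1)\to C^\infty(\Omega,\Lambda^2_{\mp})$ of $d$ is onto: $d^{\mp}(d^{\mp})^*$ is a constant multiple of the componentwise Laplacian, which is surjective on $C^\infty(\Omega)$ by Malgrange.

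One remark in your last paragraph is false. When $0\in\Omega$, no nonzero constant has the form $L_0^{(1)}f+L_1^{(1)}g$, since every $z_A^{A'}$ vanishes at $q_0=0$ and so every such expression vanishes at $0$. The step does no harm only because it is unnecessary. The preimage of a nonzero class under the surjective connecting map $H^0(\Omega,\mathcal{I}_X^{(0)})\to H^1(\Omega,\mathcal{R}^{(2)})$ is already a section of $\mathcal{I}_X^{(0)}$, so it satisfies $h(0)=0$ and nothing needs to be subtracted. Delete that sentence.
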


By standard sheaf theory, the short exact sequence in
Theorem~\ref{thm:main} induces a long exact sequence of cohomology
groups, which plays a central role in the proof of
Theorem~\ref{thm:2}.  In general, for every $k\geqslant 0$ we have
\begin{equation*}
\begin{aligned}
0\rightarrow & H^0(\Omega,\mathcal{R}^{(k+2)})\stackrel{\widetilde{\mathcal{L}}_0^{(k)}}{\longrightarrow}H^0\big(\Omega,\mathcal{R}^{(k+1)}\oplus \mathcal{R}^{(k+1)}\big)\stackrel{\mathcal{L}_0^{(k)}}{\longrightarrow} H^0(\Omega,\mathcal{R}^{(k)})\\
\stackrel{s_0^{(k)}}{\longrightarrow} & H^1(\Omega,\mathcal{R}^{(k+2)}) \stackrel{\widetilde{\mathcal{L}}_1^{(k)}}{\longrightarrow}H^1\big(\Omega,\mathcal{R}^{(k+1)}\oplus \mathcal{R}^{(k+1)}\big)\stackrel{\mathcal{L}_1^{(k)}}{\longrightarrow} H^1(\Omega,\mathcal{R}^{(k)}) \\
\stackrel{s_1^{(k)}}{\longrightarrow} & H^2(\Omega,\mathcal{R}^{(k+2)}) \stackrel{\widetilde{\mathcal{L}}_2^{(k)}}{\longrightarrow}H^2\big(\Omega,\mathcal{R}^{(k+1)}\oplus \mathcal{R}^{(k+1)}\big)\stackrel{\mathcal{L}_2^{(k)}}{\longrightarrow} H^2(\Omega,\mathcal{R}^{(k)}) \\
\stackrel{s_2^{(k)}}{\longrightarrow} & \cdots
\end{aligned}
\end{equation*}

In practice, one works not with the definitions of sheaf cohomology but with the isomorphic cohomology of differential forms associated to $k$-regular functions, which is far more amenable to computation.  It is therefore natural to ask whether the maps in the above long exact sequence admit explicit expressions in terms of differential operators.  When $X=\Omega\cap \{q_0=0\}=\emptyset$, the answer is affirmative.  In this case the ideal sheaf $\mathcal{I}_X^{(k)}$ coincides with $\mathcal{R}^{(k)}$, whose differential-form resolution is provided by the $k$-Cauchy--Fueter complex. Using this complex, all connecting maps can be identified with explicit differential operators (see Definition~\ref{def: L and tilde L} and Proposition~\ref{prop: connecting homomorphism s}). We thus obtain the following.
\begin{theorem}[Long exact sequence in cohomology]\label{thm:long exact sequence}
Let $\Omega\subset\mathbb{H}^n$ be a domain with $\Omega\cap \{q_0=0\}=\emptyset$. Then for every $k\geqslant 0$ there is a long exact sequence
\begin{equation*}
\begin{aligned}
0&\rightarrow H^0(\Omega,\mathcal{R}^{(k+2)})\stackrel{\widetilde{\mathcal{L}}_0^{(k)}}{\longrightarrow}H^0\big(\Omega,\mathcal{R}^{(k+1)}\oplus \mathcal{R}^{(k+1)}\big)\stackrel{\mathcal{L}_0^{(k)}}{\longrightarrow} H^0(\Omega,\mathcal{R}^{(k)})\stackrel{s_0^{(k)}}{\longrightarrow} H^1(\Omega,\mathcal{R}^{(k+2)})\rightarrow  \\
\cdots &\rightarrow H^k\big(\Omega,\mathcal{R}^{(k+1)}\oplus \mathcal{R}^{(k+1)}\big)\stackrel{\mathcal{L}_k^{(k)}}{\longrightarrow} H^k(\Omega,\mathcal{R}^{(k)})\stackrel{s_k^{(k)}}{\longrightarrow}H^{k+1}(\Omega,\mathcal{R}^{(k+2)})\\
&\stackrel{\widetilde{\mathcal{L}}_{k+1}^{(k)}}{\longrightarrow} H^{k+1}\big(\Omega,\mathcal{R}^{(k+1)}\oplus \mathcal{R}^{(k+1)}\big).
\end{aligned}
\end{equation*}
\end{theorem}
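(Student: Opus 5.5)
Since $\Omega\cap\{q_0=0\}=\emptyset$, every stalk of $\mathcal{I}_X^{(k)}$ equals that of $\mathcal{R}^{(k)}$, so $\mathcal{I}_X^{(k)}=\mathcal{R}^{(k)}$ on $\Omega$. Theorem~\ref{thm:main} then gives a short exact sequence of sheaves
\[
0\to\mathcal{R}^{(k+2)}\xrightarrow{\widetilde{\mathscr{L}}^{(k)}}\mathcal{R}^{(k+1)}\oplus\mathcal{R}^{(k+1)}\xrightarrow{\mathscr{L}^{(k)}}\mathcal{R}^{(k)}\to0 .
\]
Standard sheaf theory yields the abstract long exact sequence displayed before the statement. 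The content of the theorem is therefore twofold: the maps should be realized concretely on differential-form cohomology, and the sequence should be truncated at the indicated degree, i.e.\ it should end with $\widetilde{\mathcal{L}}_{k+1}^{(k)}$ rather than continue with a surjective $\mathcal{L}_{k+1}^{(k)}$.

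The plan is as follows. First, use the $k$-Cauchy--Fueter complex as a fine (or at least acyclic) resolution of $\mathcal{R}^{(k)}$, so that $H^j(\Omega,\mathcal{R}^{(k)})$ is the $j$-th cohomology of the complex $\mathscr{D}_0^{(k)},\mathscr{D}_1^{(k)},\dots$ on smooth sections over $\Omega$. This is possible because $\Omega$ avoids $\{q_0=0\}$ and the complex is exact on small balls by the quaternionic Poincar\'e lemma. Second, extend $L_0,L_1$ to the whole complexes, using the operators of Definition~\ref{def: L and tilde L}. Applied degreewise, these give chain maps between the $(k+2)$-, $(k+1)$- and $k$-Cauchy--Fueter complexes, and they commute with the differentials; this commutation is exactly what makes $L_0^{(k)}f$ lie in $\mathcal{R}^{(k-1)}$ at degree $0$. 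Third, check that these chain maps induce the sheaf maps on cohomology. For the connecting map, lift a closed form by an explicit right inverse of $(L_0,L_1)$ on forms, which exists because $q_0$ is invertible on $\Omega$. For instance, take something like $\omega\mapsto(\bar q_0|q_0|^{-2}\cdot\,\omega,0)$ adjusted into the correct symmetric-power bundle. Then apply the differential and pull back through $\widetilde{\mathscr{L}}$. By Proposition~\ref{prop: connecting homomorphism s}, this is $s_j^{(k)}$, and it agrees with the abstract connecting homomorphism by the usual snake-lemma identification for a short exact sequence of complexes. Exactness at each spot is then the usual zig-zag argument, once we know that the degreewise sequence of form complexes is exact.

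The truncation comes from the length of the complexes. The $k$-Cauchy--Fueter complex on $\mathbb{H}^n$ has a length depending on $k$ and $n$: the spinor weights drop as the degree increases. The degreewise sequence of forms $0\to C^{j}_{(k+2)}\to C^{j}_{(k+1)}\oplus C^{j}_{(k+1)}\to C^{j}_{(k)}\to0$ may fail to be short exact once $j$ exceeds the range where the symmetric-power ranks fit (roughly $j\le k+1$). So I would prove degreewise exactness only for $j\le k+1$, together with injectivity of $\widetilde{\mathscr{L}}$ in degree $k+1$. The zig-zag lemma then gives exactness up to $H^{k+1}(\Omega,\mathcal{R}^{(k+2)})\to H^{k+1}(\Omega,\mathcal{R}^{(k+1)}\oplus\mathcal{R}^{(k+1)})$.

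The main obstacle is the pointwise algebra: showing that on $\Omega$ the degreewise operators form a short exact sequence of vector bundles in degrees $\le k+1$. This is a statement about $\mathrm{Sym}$-tensors and multiplication by the $2\times2$ matrix of $q_0$. I would try to verify it by a rank count, together with the invertibility of $q_0$, which gives an explicit splitting.
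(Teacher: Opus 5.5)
Your overall architecture matches the paper's: the operators of Definition \ref{def: L and tilde L} act as chain maps between the Cauchy--Fueter complexes (Proposition \ref{prop:DL and D tilde L commutative}), you use degreewise exactness, and you finish with a zig-zag argument. The gap is in the top degree, which is the only non-routine part of the statement. Degreewise short exactness fails at $j=k+1$. There $V^{(k+2)}_{k+1}\cong\mathbb{C}^2\otimes\Lambda^{k+1}\mathbb{C}^{2n}$, $V^{(k+1)}_{k+1}\cong\Lambda^{k+1}\mathbb{C}^{2n}$ and $V^{(k)}_{k+1}\cong\Lambda^{k+2}\mathbb{C}^{2n}$. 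So the ranks $2\binom{2n}{k+1}$, $2\binom{2n}{k+1}$, $\binom{2n}{k+2}$ are not additive (unless $k+2>2n$). Moreover, $\widetilde{\mathcal{L}}^{(k)}_{k+1}$ is already a bundle isomorphism, and $\mathcal{L}^{(k)}_{k+1}$ is a first-order differential operator, not a bundle map. Using Lemma \ref{lem:computation z} one finds
\[
\mathcal{L}^{(k)}_{k+1}\widetilde{\mathcal{L}}^{(k)}_{k+1}h=|q_0|^2\bigl(d^{0'}h_{0'}+d^{1'}h_{1'}\bigr)=|q_0|^2\mathscr{D}^{(k+2)}_{k+1}h ,
\]
which is nonzero in general. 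So in degree $k+1$ the three terms do not even form a complex, and no rank count or pointwise splitting can settle what you call the main obstacle. What is actually available is short exactness for $j\le k$ (Propositions \ref{prop: injective of tilde L}, \ref{prop: exactness of L and tilde L}, \ref{prop: surjective of L}), together with bijectivity of $\widetilde{\mathcal{L}}^{(k)}_{k+1}$.

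Consequently the usual zig-zag lemma does not give the last two spots of the sequence. Take $b$ with $\mathcal{L}^{(k)}_k b=f$ and $d^{0'}d^{1'}f=0$, and set $a=(\widetilde{\mathcal{L}}^{(k)}_{k+1})^{-1}\mathscr{D}^{(k+1)}_k b$. To show $a$ is $\mathscr{D}^{(k+2)}_{k+1}$-closed, the standard argument needs injectivity of $\widetilde{\mathcal{L}}^{(k)}_{k+2}$; you only ask for injectivity in degree $k+1$. That injectivity fails, since $\widetilde{\mathcal{L}}^{(k)}_{k+2}$ annihilates constant forms $\omega^0\wedge\omega^1\wedge\beta$. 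Likewise, the inclusion $\ker(\widetilde{\mathcal{L}}^{(k)}_{k+1})_*\subset\operatorname{Im}s^{(k)}_k$ at $H^{k+1}(\Omega,\mathcal{R}^{(k+2)})$ normally uses $\mathcal{L}^{(k)}_{k+1}\widetilde{\mathcal{L}}^{(k)}_{k+1}=0$, which is false here.

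The paper handles closedness of $s^{(k)}_k f$ by the explicit computation $\mathscr{D}^{(k+2)}_{k+1}(2s^{(k)}_kf)=\frac{2}{|q_0|^2}d^{0'}d^{1'}f$ in Proposition \ref{prop: connecting homomorphism s}. You can close both points at once by combining the displayed identity with $\mathcal{L}^{(k)}_{k+1}\mathscr{D}^{(k+1)}_k=\mathscr{D}^{(k)}_k\mathcal{L}^{(k)}_k$. If $\widetilde{\mathcal{L}}^{(k)}_{k+1}a=\mathscr{D}^{(k+1)}_kb$, then $|q_0|^2\mathscr{D}^{(k+2)}_{k+1}a=d^{0'}d^{1'}\bigl(\mathcal{L}^{(k)}_kb\bigr)$, so $a$ is closed if and only if $\mathcal{L}^{(k)}_kb$ is. This vanishing on closed forms, not degreewise exactness in degree $k+1$, is what carries the sequence up to $\widetilde{\mathcal{L}}^{(k)}_{k+1}$.

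A minor point: the hypothesis $\Omega\cap\{q_0=0\}=\emptyset$ is not what makes the $k$-Cauchy--Fueter complex an acyclic resolution, which holds on any domain. It is used to invert $|q_0|^2$ in the degreewise algebra.
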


We note that the sequence terminates at $H^{k+1}\big(\Omega,\mathcal{R}^{(k+1)}\oplus\mathcal{R}^{(k+1)}\big)$. This is because the $k$-Cauchy--Fueter complex involves a second-order differential operator in degree $k$, which forces the subsequent map $\mathcal{L}_{k+1}^{(k)}$ to be a first order differential operator; exactness beyond this point cannot be verified within the present framework.

When $\Omega\cap \{q_0=0\}\neq\emptyset$, an explicit description of the connecting maps is no longer available. Nevertheless, by means of \v{C}ech cohomology we can still obtain a useful criterion for the vanishing of $H^1(\Omega,\mathcal{I}_X^{(k)})$, to which we now turn.

\begin{theorem}[Cohomology vanishing criteria]\label{thm:main2}
Let $n,k\geqslant 1$ and let $\Omega\subset\mathbb{R}^{4n}$ be a domain.
\begin{enumerate}
\item If $X=\Omega\cap\{q_0=0\}\neq\emptyset$, then $H^1(\Omega,\mathcal{I}_X^{(k)})=0$ if and only if $\widetilde{\mathcal{L}}_1^{(k)}$ is surjective and $\widetilde{\mathcal{L}}_2^{(k)}$ is injective.
\item If $\Omega\cap\{q_0=0\}=\emptyset$, then $H^k(\Omega,\mathcal{R}^{(k)})=0$ if and only if $\widetilde{\mathcal{L}}_k^{(k)}$ is surjective and $\widetilde{\mathcal{L}}_{k+1}^{(k)}$ is injective.
\end{enumerate}
\end{theorem}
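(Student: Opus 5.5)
The plan is to read both parts off the long exact cohomology sequence coming from the short exact sequence of Theorem~\ref{thm:main}. In part (1) the third sheaf is the ideal sheaf $\mathcal{I}_X^{(k)}$. In part (2) we have $X=\emptyset$, so $\mathcal{I}_X^{(k)}=\mathcal{R}^{(k)}$, and we use the explicit long exact sequence of Theorem~\ref{thm:long exact sequence}. In both cases the criterion is the standard fact about a five-term exact segment
\[
A^{p}\xrightarrow{\alpha} B^{p}\xrightarrow{\beta} C^{p}\xrightarrow{s} A^{p+1}\xrightarrow{\alpha'} B^{p+1},
\]
where $A=\mathcal{R}^{(k+2)}$, $B=\mathcal{R}^{(k+1)}\oplus\mathcal{R}^{(k+1)}$ and $C$ is the target sheaf. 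By exactness, $\operatorname{Im}\beta=\ker s$ and $\operatorname{Im} s=\ker\alpha'$. Hence $C^{p}=0$ if and only if both $\beta=0$ and $s=0$. Next, $\beta=0$ holds iff $\alpha$ is surjective, since $\ker\beta=\operatorname{Im}\alpha$. Likewise $s=0$ holds iff $\alpha'$ is injective.

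For part (1) take $p=1$, $C=\mathcal{I}_X^{(k)}$. The relevant segment is
\[
H^1(\Omega,\mathcal{R}^{(k+2)})\xrightarrow{\widetilde{\mathcal{L}}_1^{(k)}} H^1(\Omega,\mathcal{R}^{(k+1)}\oplus\mathcal{R}^{(k+1)})\to H^1(\Omega,\mathcal{I}_X^{(k)})\xrightarrow{s_1^{(k)}} H^2(\Omega,\mathcal{R}^{(k+2)})\xrightarrow{\widetilde{\mathcal{L}}_2^{(k)}} H^2(\Omega,\mathcal{R}^{(k+1)}\oplus\mathcal{R}^{(k+1)}).
\]
This is a portion of the sheaf-theoretic long exact sequence, which exists for any short exact sequence of sheaves of abelian groups; we may compute it with \v{C}ech cohomology, as the paper indicates. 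No explicit description of $s_1^{(k)}$ is needed, because only exactness is used. The algebraic fact above then gives: $H^1(\Omega,\mathcal{I}_X^{(k)})=0$ if and only if $\widetilde{\mathcal{L}}_1^{(k)}$ is surjective and $\widetilde{\mathcal{L}}_2^{(k)}$ is injective. The hypothesis $X\neq\emptyset$ only serves to distinguish this case from part (2). The notation $\widetilde{\mathcal{L}}_p^{(k)}$ should be understood as the map induced on sheaf cohomology, or equivalently on \v{C}ech cohomology.

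For part (2) take $p=k$ in Theorem~\ref{thm:long exact sequence}. That sequence contains exactly the segment
\[
H^k(A)\xrightarrow{\widetilde{\mathcal{L}}_k^{(k)}} H^k(B)\xrightarrow{\mathcal{L}_k^{(k)}} H^k(\Omega,\mathcal{R}^{(k)})\xrightarrow{s_k^{(k)}} H^{k+1}(A)\xrightarrow{\widetilde{\mathcal{L}}_{k+1}^{(k)}} H^{k+1}(B),
\]
and it terminates precisely at the last term we need. The same argument then yields the criterion. The main point to check is that exactness holds at every node used here. At $H^k(B)$ we need $\ker\mathcal{L}_k^{(k)}=\operatorname{Im}\widetilde{\mathcal{L}}_k^{(k)}$; at $H^k(\Omega,\mathcal{R}^{(k)})$ we need exactness at the connecting map $s_k^{(k)}$; and at $H^{k+1}(A)$ we need $\ker\widetilde{\mathcal{L}}_{k+1}^{(k)}=\operatorname{Im}s_k^{(k)}$. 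All three are asserted in Theorem~\ref{thm:long exact sequence}, since it lists $\widetilde{\mathcal{L}}_{k+1}^{(k)}$ as the final map and claims exactness at $H^{k+1}(\Omega,\mathcal{R}^{(k+2)})$. Note that we never need exactness at $H^{k+1}(B)$, which is the node the paper warns cannot be verified.

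I expect the only potential obstacle to be bookkeeping of this kind: making sure the explicitly defined operators of Definition~\ref{def: L and tilde L} agree with the abstract induced maps under the isomorphism to the $k$-Cauchy--Fueter cohomology. This is exactly what Proposition~\ref{prop: connecting homomorphism s} and Theorem~\ref{thm:long exact sequence} provide, so the proof reduces to elementary diagram-chasing.
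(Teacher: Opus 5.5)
Your proposal is correct and matches the paper's proof. Part (2) is read off the five-term segment of Theorem~\ref{thm:long exact sequence}, and part (1) off the long exact sequence induced by Theorem~\ref{thm:main}, in both cases via the elementary exactness argument you give.

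The only difference is in part (1). The paper runs it through \v{C}ech cohomology of a convex cover and transfers surjectivity and injectivity between the \v{C}ech maps $\widetilde{\mathscr{L}}_p^{(k)}$ and the differential-form maps $\widetilde{\mathcal{L}}_p^{(k)}$ via Lemma~\ref{lem:connection between tilde L and msthscr L}, which rests on Proposition~\ref{prop:DL and D tilde L commutative}. That lemma, not Proposition~\ref{prop: connecting homomorphism s} or Theorem~\ref{thm:long exact sequence} (both of which assume $\Omega\cap\{q_0=0\}=\emptyset$), is what justifies reading $\widetilde{\mathcal{L}}_1^{(k)}$ and $\widetilde{\mathcal{L}}_2^{(k)}$ as the sheaf-induced maps when $X\neq\emptyset$.
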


From the standard short exact sequence of quotient sheaves
\begin{equation*}
    0\longrightarrow \mathcal{I}_X^{(k)}    \longrightarrow \mathcal{R}^{(k)}     \longrightarrow \mathcal{R}^{(k)}_X \longrightarrow 0,
\end{equation*}
where $\mathcal{R}^{(k)}_X := \mathcal{R}^{(k)}/\mathcal{I}_X^{(k)}$, we obtain a long exact sequence in cohomology.  The connecting map
$H^0(\Omega,\mathcal{R}^{(k)}_X) \to H^1(\Omega,\mathcal{I}_X^{(k)})$ shows that whenever $H^1(\Omega,\mathcal{I}_X^{(k)})=0$, the restriction map $\mathcal{R}^{(k)}(\Omega) \to \mathcal{R}^{(k)}_X(\Omega)$ is surjective --- precisely the condition that every $k$-regular function on $X$ extends to a $k$-regular function on $\Omega$. Theorem~\ref{thm:main2}(i) supplies exactly such a vanishing criterion, and together they yield:

\begin{corollary}[Extension criterion via $\widetilde{\mathcal{L}}$-operators]\label{cor:1}
Let $k\geqslant 1$ and $n\geqslant 2$, and let $\Omega\subset\mathbb{R}^{4n}$ be a domain with $X=\Omega\cap\{q_0=0\}\neq\emptyset$. If $\widetilde{\mathcal{L}}_1^{(k)}$ is surjective and $\widetilde{\mathcal{L}}_2^{(k)}$ is injective, then every $k$-regular function on $X$ admits a $k$-regular extension to $\Omega$.
\end{corollary}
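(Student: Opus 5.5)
The plan is to combine Theorem~\ref{thm:main2}(i) with the long exact cohomology sequence of the quotient short exact sequence
\[
0\longrightarrow \mathcal{I}_X^{(k)}\longrightarrow \mathcal{R}^{(k)}\longrightarrow \mathcal{R}^{(k)}_X\longrightarrow 0 .
\]
First, I apply Theorem~\ref{thm:main2}(i). Its hypotheses are $n,k\geqslant 1$ and $X\neq\emptyset$, and these are implied by the corollary's assumptions $k\geqslant 1$, $n\geqslant 2$, $X\neq\emptyset$. Together with the assumed surjectivity of $\widetilde{\mathcal{L}}_1^{(k)}$ and injectivity of $\widetilde{\mathcal{L}}_2^{(k)}$, it gives $H^1(\Omega,\mathcal{I}_X^{(k)})=0$.

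Second, I write down the initial segment of the long exact sequence of the quotient sequence:
\[
0\to H^0(\Omega,\mathcal{I}_X^{(k)})\to H^0(\Omega,\mathcal{R}^{(k)})\to H^0(\Omega,\mathcal{R}^{(k)}_X)\xrightarrow{\ \delta\ } H^1(\Omega,\mathcal{I}_X^{(k)}).
\]
Since the target of $\delta$ vanishes, exactness shows that the restriction map $\mathcal{R}^{(k)}(\Omega)\to\mathcal{R}^{(k)}_X(\Omega)$ is surjective.

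Third, and this is the step needing the most care, I identify global sections of $\mathcal{R}^{(k)}_X=\mathcal{R}^{(k)}/\mathcal{I}_X^{(k)}$ with $k$-regular functions on $X$. The quotient sheaf is supported on $X$. A germ at $x\in X$ is a germ of a $k$-regular function modulo those vanishing on $X$, so it is determined exactly by its restriction to $X$.

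Here I interpret ``a $k$-regular function on $X$'' as a function on $X$ that is locally the restriction of a $k$-regular function defined on an open neighbourhood in $\Omega$. With this reading, such a function $\varphi$ is given on an open cover $\{U_\alpha\}$ by local $k$-regular functions $F_\alpha$ with $F_\alpha|_{X\cap U_\alpha}=\varphi|_{X\cap U_\alpha}$. On overlaps, $F_\alpha-F_\beta$ vanishes on $X$, so it is a section of $\mathcal{I}_X^{(k)}$, and the classes $[F_\alpha]$ glue to a global section of $\mathcal{R}^{(k)}_X$. On open sets $U$ disjoint from $X$, the quotient is zero, because $\mathcal{I}_X^{(k)}=\mathcal{R}^{(k)}$ there.

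Surjectivity from the second step then produces $F\in\mathcal{R}^{(k)}(\Omega)$ with $[F]=[F_\alpha]$ locally. This means $F-F_\alpha$ vanishes on $X\cap U_\alpha$, hence $F|_X=\varphi$, and $F$ is the required extension.

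The only genuine subtlety is this identification of $H^0(\Omega,\mathcal{R}^{(k)}_X)$ with functions on $X$. I would check that if two germs agree on $X$, they differ by an element of $\mathcal{I}_X^{(k)}$; this is immediate from the definition of $\mathcal{I}_X^{(k)}$ as $k$-regular germs vanishing on $X$. The condition $n\geqslant 2$ is not used in any of these steps; it only ensures that $X$ is a genuine quaternionic domain in $\mathbb{H}^{n-1}$, so that the notion of a $k$-regular function on $X$ is meaningful.
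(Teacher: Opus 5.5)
Your proposal is correct and follows the paper's route: Corollary~\ref{cor:1} is derived from Theorem~\ref{thm:main2}(i), which gives $H^1(\Omega,\mathcal{I}_X^{(k)})=0$, together with Proposition~\ref{prop:extension}, which is exactly your long-exact-sequence argument for $0\to\mathcal{I}_X^{(k)}\to\mathcal{R}^{(k)}\to\mathcal{R}^{(k)}_X\to 0$ plus the identification $H^0(\Omega,\mathcal{R}^{(k)}_X)\cong\mathcal{R}^{(k)}(\Omega\cap X)$, which you spell out in more detail than the paper does. One point is worth making explicit: the paper means intrinsically $k$-regular functions on $X\subset\mathbb{R}^{4n-4}$, and your ``locally a restriction'' reading agrees with this because any $k$-regular $\varphi(q_1,\dots,q_{n-1})$ extends locally as the $q_0$-independent function $F(q_0,q_1,\dots,q_{n-1})=\varphi(q_1,\dots,q_{n-1})$, which is $k$-regular since the $\omega^0,\omega^1$ components of $\mathscr{D}_0^{(k)}F$ involve only $q_0$-derivatives (and conversely, restrictions to $X$ of $k$-regular functions are $k$-regular in $n-1$ variables).
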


A particularly practical sufficient condition is obtained when the relevant cohomology groups of $\mathcal{R}^{(k+1)}$ and
$\mathcal{R}^{(k+2)}$ vanish.

\begin{corollary}[Extension under cohomology vanishing]\label{cor:2}
Let $k\geqslant 0$ and let $\Omega\subset\mathbb{R}^{4n}$ be a domain with $X=\Omega\cap\{q_0=0\}\neq\emptyset$. If
\[
H^1(\Omega,\mathcal{R}^{(k+1)})=0 \quad\text{and}\quad H^2(\Omega,\mathcal{R}^{(k+2)})=0,
\]
then every $k$-regular function on $X$ extends to a $k$-regular function on $\Omega$. In particular, this condition is automatically satisfied when $\Omega$ is convex.
\end{corollary}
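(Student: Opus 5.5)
The plan is to derive Corollary~\ref{cor:2} from the long exact sequence of the ideal sheaf, via Theorem~\ref{thm:main}, together with the quotient-sheaf long exact sequence described just before Corollary~\ref{cor:1}. First, take the Koszul resolution of Theorem~\ref{thm:main},
\[
0\to \mathcal{R}^{(k+2)}\to \mathcal{R}^{(k+1)}\oplus\mathcal{R}^{(k+1)}\to \mathcal{I}_X^{(k)}\to 0,
\]
and extract from its long exact sequence in cohomology the segment
\[
H^1\big(\Omega,\mathcal{R}^{(k+1)}\big)^{\oplus 2}\longrightarrow H^1(\Omega,\mathcal{I}_X^{(k)})\longrightarrow H^2(\Omega,\mathcal{R}^{(k+2)}).
\]
Under the hypotheses both outer groups vanish, since cohomology commutes with finite direct sums. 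Exactness in the middle then forces $H^1(\Omega,\mathcal{I}_X^{(k)})=0$. This argument uses only the exactness of the sheaf sequence, so it works for every $k\geqslant 0$ and every $n$, without needing the restrictions in Theorem~\ref{thm:main2}. Equivalently, the vanishing makes $\widetilde{\mathcal L}^{(k)}_1$ trivially surjective and $\widetilde{\mathcal L}^{(k)}_2$ trivially injective.

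Next, use the quotient sequence $0\to\mathcal{I}_X^{(k)}\to\mathcal{R}^{(k)}\to\mathcal{R}^{(k)}_X\to 0$. The segment $H^0(\Omega,\mathcal{R}^{(k)})\to H^0(\Omega,\mathcal{R}^{(k)}_X)\to H^1(\Omega,\mathcal{I}_X^{(k)})=0$ shows that restriction is surjective on global sections. One point needs care: identifying a "$k$-regular function on $X$" with a global section of $\mathcal{R}^{(k)}_X$. A section of the quotient sheaf is a function on $X$ that is locally the restriction of a $k$-regular function. I would take this as the meaning of $k$-regular on $X$, consistent with the discussion preceding Corollary~\ref{cor:1}. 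Surjectivity then gives a global $F\in\mathcal{R}^{(k)}(\Omega)$ restricting to the given data.

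For the convex case, I need $H^1(\Omega,\mathcal{R}^{(k+1)})=0$ and $H^2(\Omega,\mathcal{R}^{(k+2)})=0$. I would realize $H^j(\Omega,\mathcal{R}^{(m)})$ as the cohomology of the $m$-Cauchy--Fueter complex on $\Omega$, which is the differential-form resolution the paper alludes to. I would then invoke the known vanishing of this cohomology on convex domains, either through the Poincar\'e-type lemma or through the solvability of the nonhomogeneous $k$-Cauchy--Fueter equations on convex domains from Wang's work.

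The main obstacle is this last step. Wang's $L^2$/Hörmander-type results for the $k$-Cauchy--Fueter complex in several quaternionic variables give vanishing in the relevant positive degrees on convex (or suitably pseudoconvex) domains. I would cite these directly, checking that degrees $1$ and $2$ fall in the range where the complex resolves the sheaf. This holds for $m=k+1\geqslant 1$ and $m=k+2\geqslant 2$, because the second-order operator occurs only in degree $m$.
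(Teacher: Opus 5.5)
Your proof is correct and follows the paper's route. You use the long exact sequence of the Koszul resolution in Theorem~\ref{thm:main} to get $H^1(\Omega,\mathcal{I}_X^{(k)})=0$, then the quotient-sheaf sequence as in Proposition~\ref{prop:extension}, and Wang's vanishing of $H^p(U,\mathcal{R}^{(m)})$, $p\geqslant 1$, on convex $U$ for the last assertion.

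The only difference is that you read off $H^1(\Omega,\mathcal{I}_X^{(k)})=0$ directly from the sheaf-cohomology sequence instead of citing Theorem~\ref{thm:main2}. That theorem is the same argument in \v{C}ech form but is stated only for $k\geqslant 1$, so your version also covers $k=0$. Your check on degree ranges is unnecessary, since the $k$-Cauchy--Fueter resolution is exact in every degree.
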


The organization of this paper is as follows. In Section~\ref{section:Preliminaries} we recall necessary background on $k$-Cauchy--Fueter operators and $k$-regular functions. Section~\ref{section:L0L1} extends the multiplication-like operators $L_0^{(k)},L_1^{(k)}$ to $\mathbb{H}^n$ and establishes their basic properties. In Section~\ref{section:Koszul complex} we prove Theorem~\ref{thm:main}, the Koszul resolution of $\mathcal{I}_X^{(k)}$. Section~\ref{section:long exact sequence} constructs the long exact sequence of differential-form cohomology (Theorem~\ref{thm:long exact sequence}) with explicit differential operators as connecting maps. Finally, Section~\ref{section:Application} presents the applications: surjectivity of $(L_0^{(1)},L_1^{(1)})$, vanishing criteria and extension theorems.

\section{Preliminaries}\label{section:Preliminaries}

\subsection{Cauchy-Fueter operator}

  Let $\mathbb{H}$ be the quaternion space with the imaginary units $\mathbf{i},\mathbf{j},\mathbf{k}$  satisfying the relations:
$$\mathbf{i}^2=\mathbf{j}^2=\mathbf{k}^2=\mathbf{ijk}=-1.$$

Let  $\mathbb H^n$ be   the set of several quaternionic variables. We denote
$$x=\left(q_0,q_1,\ldots,q_{n-1}\right)\in \mathbb H^n,$$
where
$$q_t=x_{4t}+\mathbf{i}x_{4t+1}+\mathbf{j}x_{4t+2}+\mathbf{k}x_{4t+3}\in\mathbb H$$
for any $t=0,1,\ldots,n-1 $ with all $x_j$ being real.

 The Cauchy-Fueter operator is defined as follows
\begin{align*}
  &D: C^\infty(\mathbb{R}^{4n},\mathbb{H})\longrightarrow C^\infty(\mathbb{R}^{4n},\mathbb{H}^n),\\
    &(Df)_t: = \left(\frac{\partial}{\partial x_{4t}}+\mathbf{i}\frac{\partial}{\partial x_{4t+1}}+\mathbf{j}\frac{\partial}{\partial x_{4t+2}}+\mathbf{k}\frac{\partial}{\partial x_{4t+3}}\right)f,\ \ \  t=0,1,\ldots,n-1.
\end{align*}
Let $\Omega\subset\mathbb{R}^{4n}$ be an open subset and $f\in C^1(\Omega,\mathbb{H})$. If $Df=0$ on  $\Omega$, we call $f$ \textbf{ quaternionic regular} on $\Omega$. Let  $\mathcal{R}$ denote the sheaf of quaternionic regular functions on a domain $\Omega\subset \mathbb{R}^{4n}$.

\subsection{$k$-Cauchy-Fueter operators}

  As many properties of the $k$-Cauchy-Fueter equation and $k$-regular functions have already been investigated by Wang \cite{Wang10,Wang15,Wang19,Wang25,KW13}, here we briefly review the definition of the $k$-Cauchy-Fueter operator, along with basic notations and concepts such as the $k$-Cauchy-Fueter complex, $k$-regular functions.

We embed the quaternion ring $\mathbb{H}$ into the ring of complex $2\times 2$ matrices via
\begin{equation}\label{eq:z AAprime}
\begin{aligned}
\tau(q_t):&=\left(\begin{array}{cc}
    x_{4t}+ x_{4t+1}i & -x_{4t+2}- x_{4t+3}i  \\
     x_{4t+2}- x_{4t+3}i& x_{4t}- x_{4t+1}i
\end{array}\right)\\
&=
\left(\begin{array}{cc}
    \overline{z^{(2t)0^\prime}} & \overline{z^{(2t)1^\prime}}  \\
    \overline{z^{(2t+1)0^\prime}} & \overline{z^{(2t+1)1^\prime}}
\end{array}\right),\qquad t=0,\ldots,n-1.
\end{aligned}
\end{equation}
This yields  operators  $\nabla_{AA^\prime}$,   defined by
\begin{equation*}
  \begin{aligned}
  \left(\begin{array}{cc}
  \nabla_{(2t)0^\prime}   & \nabla_{(2t)1^\prime}\\
  \nabla_{(2t+1)0^\prime} & \nabla_{(2t+1)1^\prime}
  \end{array}\right):&=2\left(\begin{array}{cc}
    \frac{\partial}{{\partial z^{(2t)0^\prime}}} & \frac{\partial}{{\partial z^{(2t)1^\prime}}}  \\
    \frac{\partial}{{\partial z^{(2t+1)0^\prime}}} & \frac{\partial}{{\partial z^{(2t+1)1^\prime}}}
\end{array}\right)\\
&=
\left(
\begin{array}{cc}
\dfrac{\partial}{\partial  x_{4t} }+i\dfrac{\partial}{\partial x_{4t+1}}   & -\dfrac{\partial}{\partial  x_{4t+2} }-i\dfrac{\partial}{\partial x_{4t+3}} \\
 \dfrac{\partial}{\partial  x_{4t+2} }-i\dfrac{\partial}{\partial x_{4t+3}} &\  \ \ \dfrac{\partial}{\partial  x_{4t} }-i\dfrac{\partial}{\partial x_{4t+1}}
\end{array}
\right).
\end{aligned}
\end{equation*}

We can write smooth quaternions-valued functions $u,f$ as $u=u_0+\mathbf{j}u_1$ and $f=f_0+\mathbf{j}f_1$ for complex-valued functions $u_0,u_1,f_0$ and $f_1$. Then the Cauchy-Fueter equation
$$Du=f$$
is equivalent to the system:
\begin{equation*}
    \left(\begin{array}{ll}
  \nabla_{(2t)0^\prime}   & \nabla_{(2t)1^\prime}\\
  \nabla_{(2t+1)0^\prime} & \nabla_{(2t+1)1^\prime}
  \end{array}\right)
  \left(\begin{array}{l}
       u_0 \\
       u_1
  \end{array}\right)=
  \left(\begin{array}{l}
       f_0 \\
       f_1
  \end{array}\right), \qquad t=0,\ldots, n-1.
\end{equation*}

By raising  indices, we introduce derivatives
$$\nabla^{A^\prime}_{A}:=\sum_{B^\prime=0, 1} \nabla_{AB^\prime} \ \epsilon^{B^\prime A^\prime},$$
  where
 \begin{equation*}
\epsilon=(\epsilon^{B^\prime A^\prime})=\left(\begin{array}{cc}
  0 & -1 \\
  1 & 0
\end{array}\right)=\tau(\mathbf{j}).
\end{equation*}
 More precisely, we have
\begin{equation}\label{preliminary:nabla AA prime}
  \left(\begin{array}{ll}
  \nabla_{2t}^{0^\prime}   & \nabla_{2t}^{1^\prime}\\
  \nabla_{2t+1}^{0^\prime} & \nabla_{2t+1}^{1^\prime}
  \end{array}\right)=
\left(
\begin{array}{ll}
-\dfrac{\partial}{\partial x_{4t+2}}-i\dfrac{\partial}{\partial x_{4t+3}} & -\dfrac{\partial}{\partial x_{4t}}-i\dfrac{\partial}{\partial x_{4t+1}} \\
\ \ \ \dfrac{\partial}{\partial x_{4t}}-i\dfrac{\partial}{\partial x_{4t+1}}   & -\dfrac{\partial}{\partial x_{4t+2}}+i\dfrac{\partial}{\partial x_{4t+3}}
\end{array}
\right).
\end{equation}

Before introducing the \(k\)-Cauchy--Fueter operator, we first establish some necessary notations.
Fix a basis \(\{\omega^0,\omega^1,\ldots,\omega^{2n-1}\}\) of \(\mathbb{C}^{2n}\). An element of \(\Lambda^p\mathbb{C}^{2n}\) is denoted by
\[
f=\sum_{0\leqslant i_1<\cdots < i_p\leqslant 2n-1} f_{i_1\cdots i_p}\,\omega^{i_1}\wedge\cdots\wedge \omega^{i_p}.
\]

An element of $\mathbb{C}^2$ is denoted by
\[
(f_{A'}) \quad \text{or} \quad (f^{A'}), \qquad A'=0',1',
\]
where $f_{A'},f^{A'}\in\mathbb{C}$. The symmetric power $\odot^k\mathbb{C}^2$ is a subspace of $\otimes^k\mathbb{C}^2$ whose elements can be expressed as
\[
(f_{A_0'A_1'\cdots A_{k-1}'}) \quad \text{or} \quad (f^{A_0'A_1'\cdots A_{k-1}'}), \qquad A_0',A_1',\dots,A_{k-1}'=0',1',
\]
where $f_{A_0'A_1'\cdots A_{k-1}'},f^{A_0'A_1'\cdots A_{k-1}'}\in\mathbb{C}$ are invariant under permutations of their indices, i.e.,
\[
f_{A_0'A_1'\cdots A_{k-1}'}=f_{A_{\sigma(0)}'A_{\sigma(1)}'\cdots A_{\sigma(k-1)}'}\quad \text{ and } \quad  f^{A_0'A_1'\cdots A_{k-1}'}=f^{A_{\sigma(0)}'A_{\sigma(1)}'\cdots A_{\sigma(k-1)}'}
\]
for any permutation $\sigma\in S_k$, the symmetric group on $k$ letters. For the special case $k=0$, we set $\odot^0\mathbb{C}^2\cong\mathbb{C}$.

Let $k\geqslant0$. Consider the complex vector spaces
\[
V^{(k)}_j=
\begin{cases}
\odot^{k-j}\mathbb{C}^2\otimes \Lambda^{j}\mathbb{C}^{2n},
& j=0,\ldots,k,\\[6pt]
\odot^{j-k-1}\mathbb{C}^2\otimes \Lambda^{j+1}\mathbb{C}^{2n},
& j=k+1,\ldots,2n-1.
\end{cases}
\]
Note that \(V_k^{(k)}\cong \Lambda^k\mathbb{C}^{2n}\) and \(V_{k+1}^{(k)}\cong \Lambda^{k+2}\mathbb{C}^{2n}\). Let \(j\leqslant k-1\). An element of \(\odot^{k-j}\mathbb{C}^2\otimes\Lambda^{j}\mathbb{C}^{2n}\) is denoted by
\[
(f_{ A_0^\prime\cdots A_{k-j-1}^\prime}),\qquad A_0^\prime,\ldots,A_{k-j-1}^\prime=0^\prime,1^\prime,
\]
where each \(f_{A_0^\prime\cdots A_{k-j-1}^\prime}\in\Lambda^j\mathbb{C}^{2n}\) is invariant under permutations of the indices
\[A_0^\prime,\dots,A_{k-j-1}^\prime.\]
For fixed \(j>k+1\), an element of \(\odot^{j-k-1}\mathbb{C}^2\otimes\Lambda^{j+1}\mathbb{C}^{2n}\) is denoted by
\[
(f^{ A_0^\prime\cdots A_{j-k-2}^\prime}),\qquad A_0^\prime,\ldots,A_{j-k-2}^\prime=0^\prime,1^\prime,
\]
where each \(f^{A_0^\prime\cdots A_{j-k-2}^\prime}\in\Lambda^{j+1}\mathbb{C}^{2n}\) is invariant under permutations of the indices
\[A_0^\prime,\dots,A_{j-k-2}^\prime.\]

We also use the notation $\AAAAA$ to denote the indices $A_0^\prime A_1^\prime \cdots A_{p}^\prime$ for some nonnegative integer $p$.

Within the theory of quaternionic analysis, two fundamental operators $d^{0'}$ and $d^{1'}$ frequently appear in relevant research works (cf. \cite{WW17,Wang25}). We present their definitions as follows.
\begin{equation}\label{eq:def of d0d1}
\begin{aligned}
&d^{j'}\colon C^\infty(\Omega,\Lambda^p\mathbb{C}^{2n})\longrightarrow C^\infty(\Omega,\Lambda^{p+1}\mathbb{C}^{2n}),\qquad j'=0',1',\\
&d^{j'}f:=\sum_{A=0}^{2n-1}\sum_{0\leqslant A_1<\cdots < A_p\leqslant 2n-1} \nabla_{A}^{j'} f_{A_1\cdots A_p}\,\omega^A\wedge\omega^{A_1}\wedge\cdots\wedge \omega^{A_p}.
\end{aligned}
\end{equation}
Let $\Lambda^\ast\mathbb{C}^{2n}$ denote the exterior algebra. Then the operators $d^{0'}$ and $d^{1'}$ are naturally defined on $C^\infty(\Omega,\Lambda^\ast\mathbb{C}^{2n})$ and satisfy the relations below, see for example \cite{WW17}:
\begin{equation}\label{eq:prop of d0 and d1}
d^{0'}d^{0'}=0,\qquad d^{1'}d^{1'}=0, \qquad d^{0'}d^{1'}+d^{1'}d^{0'}=0.
\end{equation}

Now we introduce the $k$-Cauchy-Fueter operators
\begin{equation*}
\begin{aligned}
  \mathscr{D}_0^{(k)}: & C^\infty(\mathbb{R}^{4n},\odot^k\mathbb{C}^2)\longrightarrow C^\infty(\mathbb{R}^{4n},\Lambda^1\mathbb{C}^{2n}\otimes\odot^{k-1}\mathbb{C}^2),
\end{aligned}
\end{equation*}
defined by
 \begin{equation}\label{def:kCF}
   \left(\mathscr{D}_0^{(k)}f\right)_{A_0^\prime\cdots A_{k-2}^\prime} :=
  \begin{cases} d^{0^\prime}f_{0^\prime}+d^{1^\prime}f_{1^\prime},  & \qquad k=1,\\  \\ d^{0^\prime}f_{0^\prime A_0^\prime\cdots A_{k-2}^\prime}+ d^{1^\prime}f_{1^\prime A_0^\prime\cdots A_{k-2}^\prime}, & \qquad   k\geqslant2.
  \end{cases}
\end{equation}
Let $k,n\geqslant 1$, let $\Omega\subset\mathbb{R}^{4n}$ be an open set, and let $f\in C^1\big(\Omega,\odot^k\mathbb{C}^2\big)$. We say that $f$ is \textbf{$k$-regular} on $\Omega$ if $\mathscr{D}^{(k)}_0f=0$ holds everywhere on $\Omega$. If $d^{0'}d^{1'}f=0$ on $\Omega$, then $f$ is called \textbf{$0$-regular} on $\Omega$. In addition, vectors in $\odot^k\mathbb{C}^2$ are defined as $k$-regular functions at a single point. Denote by $\mathcal{R}^{(k)}$  the sheaf of $k$-regular functions on $\Omega$.

Various concrete examples of k-regular functions have been constructed in the literature; we refer the reader to \cite{Wang10,Wang15,LZ26} for detailed illustrations. Some typical examples are listed as follows.

\begin{example}
$(1)$  Let $\Omega\subset\mathbb{R}^{4n}$ be an open subset. Then function $f=(f_{0'},f_{1'})\in C^1(\Omega,\mathbb{C}^2)$ is $1$-regular on $\Omega$ if and only if $\mathbf{j}(f_{0'}+\mathbf{j}f_{1'})$ is a quaternionic regular function on $\Omega$.

$(2)$ We provide an explicit characterization of $2$-regular functions. If $f\in C^\infty(\Omega,\odot^2\mathbb{C}^2)$ is $2$-regular, then $f$ satisfies the following system of equations for each $t=0,1,\dots,n-1$:
\begin{equation*}
\begin{aligned}
&\nabla_{2t}^{0^\prime}f_{0^\prime0^\prime}+\nabla_{2t}^{1^\prime}f_{1^\prime0^\prime}=0,\qquad \nabla_{2t+1}^{0^\prime}f_{0^\prime0^\prime}+\nabla_{2t+1}^{1^\prime}f_{1^\prime0^\prime}=0,\\
&\nabla_{2t}^{0^\prime}f_{0^\prime1^\prime}+\nabla_{2t}^{1^\prime}f_{1^\prime1^\prime}=0,\qquad \nabla_{2t+1}^{0^\prime}f_{0^\prime1^\prime}+\nabla_{2t+1}^{1^\prime}f_{1^\prime1^\prime}=0, \qquad f_{0^\prime1^\prime}=f_{1^\prime0^\prime}.
\end{aligned}
\end{equation*}

$(3)$ Consider a quaternionic regular function $F=F_1\mathbf{i}+F_2\mathbf{j}+F_3\mathbf{k}$ with real-valued components $F_1,F_2,F_3$. We define a smooth function $f\in C^\infty(\Omega,\odot^2\mathbb{C}^2)$ by
\[f_{0^\prime0^\prime}=F_2-iF_3, \qquad   f_{0^\prime1^\prime}=f_{1^\prime0^\prime}=-iF_1,   \qquad f_{1^\prime1^\prime}=F_2+iF_3.\]
Then, $f$ is 2-regular.

$(4)$ If $g\in C^\infty(\Omega,\odot^k\mathbb{C}^2)$ is $k$-regular on $\Omega$, then every component $g_{A_0^\prime A_1^\prime\cdots A_{k-1}^\prime}$ of $g$ is $0$-regular on $\Omega$. When $n=1$, the function $f$ is $0$-regular on $\Omega$ if and only if $f$ is harmonic on $\Omega$.
\end{example}

Let \(E_j^{(k)}\) denote the trivial bundle over \(\Omega\) with fiber \(V_j^{(k)}\),
and let \(\mathcal{V}_j^{(k)}\) be the sheaf of smooth sections of \(E_j^{(k)}\).
Then the sheaf \(\mathcal{R}^{(k)}\) admits an acyclic resolution \cite{Wang10}:
\begin{equation}\label{eq:resolution_of_k_CF}
0\rightarrow \mathcal{R}^{(k)}
\xrightarrow{\,i\,}
\mathcal{V}_0^{(k)}
\xrightarrow{\mathscr{D}_0^{(k)}}
\mathcal{V}_1^{(k)}
\xrightarrow{\mathscr{D}_1^{(k)}}
\mathcal{V}_2^{(k)}
\xrightarrow{\mathscr{D}_2^{(k)}}
\cdots.
\end{equation}
Here \(i\) is the natural embedding, and the operators
\[
\mathscr{D}_j^{(k)}:
C^\infty(\mathbb{R}^{4n},V_j^{(k)})
\longrightarrow
C^\infty(\mathbb{R}^{4n},V_{j+1}^{(k)})
\]
are defined by
\begin{equation}\label{eq:def_of_Dj}
(\mathscr{D}_j^{(k)}f)_{\AAAAA}
=
\begin{cases}
d^{0^\prime}f_{0^\prime \AAAAA}
+ d^{1^\prime}f_{1^\prime \AAAAA},
& 0\leqslant j\leqslant k-2,
\\[4pt]
d^{0^\prime}f_{0^\prime} + d^{1^\prime}f_{1^\prime},
& j = k-1,
\\[4pt]
d^{0^\prime}d^{1^\prime}f,
& j = k,
\\[4pt]
d^{A_0^\prime}f,
& j = k+1,
\\[4pt]
d^{(A_{0}^\prime}f^{A_{1}^\prime \cdots A_{p(k,j)}^\prime)},
& j\geqslant k+2,
\end{cases}
\end{equation}
where $\AAAAA$ denotes the indices $A_0^\prime\cdots A_{p(k,j)}^\prime$ and
\begin{equation}\label{eq:def:p kj}
p(k,j)=
\begin{cases}
k-j-1, & j=0,\dots,k-1,
\\[4pt]
j-k-2, & j=k+2,\dots,2n.
\end{cases}
\end{equation}

Here $d^{(A_0^\prime}f^{A_1^\prime \cdots A_{p(k,j)}^\prime)}$ denotes the symmetrisation of $d^{A_0^\prime}f^{A_1^\prime \cdots A_{p(k,j)}^\prime}$ with respect to the indices $A_{0}'\cdots A_{p(k,j)}'$.
Let $f\in \otimes^p\mathbb{C}^2\otimes \Lambda^q\mathbb{C}^{2n}$. The symmetrisation over the indices of $f$ is defined as follows:
 \[f^{(A_0^\prime \ldots A_{p-1}^\prime)}:=\frac{1}{p!}\sum_{\sigma\in S_p}f^{A_{\sigma(0)}^\prime \ldots A_{\sigma(p-1)}^\prime}.\]

Of course, the direct definition of $\mathscr{D}_j^{(k)}$ is computationally intractable. We shall provide an alternative formulation of $\mathscr{D}_j^{(k)}$ via a suitable isomorphism.

Note that there exists an isomorphism between the following vector spaces:
\[
\eta \colon \odot^{p}\mathbb{C}^2\otimes\Lambda^\ast\mathbb{C}^{2n} \longrightarrow \mathbb{C}^{p+1}\otimes \Lambda^\ast\mathbb{C}^{2n}.
\]
This isomorphism is defined as
\begin{equation}\label{eq:def:eta}
(\eta(f))_{s} := f_{A_0'\cdots A_{p-1}'}, \quad \text{where } A_0 + \cdots + A_{p-1} = s,\quad s=0,1,\dots ,p.
\end{equation}
Here $(\eta(f))_{s}$ denotes a function taking values in differential forms.

 Under this isomorphism, for each $j<k$, the operator $\mathscr{D}_j^{(k)}$ can be written as
 \begin{equation}\label{eq:Djk under eta j small}
     \eta\bigl(\mathscr{D}_j^{(k)}f\bigr)_s=d^{0'}\eta(f)_s+d^{1'}\eta(f)_{s+1}.
 \end{equation}

For the cases where $j\geqslant k+2$, we need the following lemma.

\begin{lemma}\label{lem:sym}\cite{Wang17}
If \((F_{A_0' \dots A_p'}) \in \otimes^{p+1} \mathbb{C}^2 \otimes \Lambda^q \mathbb{C}^{2n}\) is symmetric in \(A_1', \dots, A_p'\), then we have
\[
F_{(A_0' \dots A_p')} = \frac{1}{p+1} \sum_{s=0}^{p} F_{A_s' A_0' \dots \widehat{A_s'} \dots A_p'},
\]
where the first term in the sum corresponds to \(F_{A_0' \dots A_p'}\).
\end{lemma}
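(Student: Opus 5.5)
We need to prove Lemma (sym): if $F$ is symmetric in $A_1',\dots,A_p'$, then the full symmetrisation satisfies
\[
F_{(A_0'\dots A_p')}=\frac{1}{p+1}\sum_{s=0}^p F_{A_s'A_0'\dots\widehat{A_s'}\dots A_p'} .
\]
The approach is to organise the sum over $S_{p+1}$ according to which original index lands in the first slot.

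First, write the definition of symmetrisation with $p+1$ indices:
\[
F_{(A_0'\dots A_p')}=\frac{1}{(p+1)!}\sum_{\sigma\in S_{p+1}}F_{A_{\sigma(0)}'\cdots A_{\sigma(p)}'} ,
\]
with permutations of $\{0,\dots,p\}$.

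Next, partition $S_{p+1}$ into the $p+1$ classes $C_s=\{\sigma:\sigma(0)=s\}$, $s=0,\dots,p$. Each $C_s$ has $p!$ elements. For $\sigma\in C_s$, the remaining positions $1,\dots,p$ receive the indices $\{A_0',\dots,A_p'\}\setminus\{A_s'\}$ in some order.

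The key observation is that $F$ is symmetric in its last $p$ slots. Hence every term with $\sigma\in C_s$ equals the single value $F_{A_s'A_0'\dots\widehat{A_s'}\dots A_p'}$, where the remaining indices are listed in increasing order. Precisely, $\sigma$ restricted to $\{1,\dots,p\}$ is a bijection onto $\{0,\dots,p\}\setminus\{s\}$, and reordering the last $p$ slots does not change the value. Summing over $C_s$ therefore gives $p!\,F_{A_s'A_0'\dots\widehat{A_s'}\dots A_p'}$.

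Finally, summing over $s$ and dividing by $(p+1)!$ yields the coefficient $p!/(p+1)!=1/(p+1)$, which is the claimed formula. The $s=0$ term is $F_{A_0'A_1'\dots A_p'}$ itself. The same argument works verbatim for upper indices and for $\Lambda^q\mathbb{C}^{2n}$-valued components, since everything is linear in the coefficients.

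The only step requiring care is the bijection bookkeeping in the key observation; there is no real obstacle.
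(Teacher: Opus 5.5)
Your proof is correct and complete. You partition $S_{p+1}$ into the $p+1$ classes $\{\sigma:\sigma(0)=s\}$, each of size $p!$, and use the symmetry of $F$ in its last $p$ slots to collapse each class to $p!\,F_{A_s'A_0'\dots\widehat{A_s'}\dots A_p'}$, which yields exactly the factor $1/(p+1)$.

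The paper gives no proof of its own to compare with; it quotes the lemma from Wang's earlier work, and grouping permutations by the first slot is the standard argument. The classes are defined by positions rather than by index values, so coincident values among the $A_i'\in\{0',1'\}$ cause no double counting.
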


For each $j\geqslant k+2$, it follows from Lemma \ref{lem:sym} that
\begin{equation}\label{eq:Djk under eta j large}
\eta\big(\mathscr{D}_j^{(k)}f\big)_s
=\frac{p(k,j)+1-s}{p(k,j)+1}\,d^{0'}\eta(f)_s
+\frac{s}{p(k,j)+1}\,d^{1'}\eta(f)_{s-1}.
\end{equation}
In addition, we adopt the convention that $\eta(f)_{-1}:=0$.

Equations \eqref{eq:Djk under eta j small} and \eqref{eq:Djk under eta j large} facilitate the calculations in the proof of our main results.

At the end of this section, we note that
\[
H^j(\Omega,\mathcal{R}^{(k)})\cong\frac{\ker\mathscr{D}_j^{(k)}}{\operatorname{Im}\mathscr{D}_{j-1}^{(k)}},\qquad j\geqslant 1.
\]
This identity holds since the resolution \eqref{eq:resolution_of_k_CF} is acyclic.

\subsection{Ideal sheaves in quaternionic analysis and extension problem}

Extending holomorphic functions from a subvariety or submanifold to the entire space is a central theme in several complex variables and complex geometry. Ideal sheaves serve as an indispensable tool in these studies. This section explores the extension of \(k\)-regular functions from
\[
\{\,q_0=0\,\}\cap\Omega
\]
to the entire domain \(\Omega\), as well as the role that ideal sheaves play in this process.

Let $\Omega\subset \mathbb{R}^{4n}$ be a domain. Consider the function spaces of $k$-regular functions on $\Omega$ that vanish on
\[
X:=\{\,q_0=0\,\}\cap\Omega.
\]
We define the presheaf $\mathcal{I}_X^{(k)}$ by
\begin{equation*}
\mathcal{I}_X^{(k)}(U):=\bigl\{f\in \mathcal{R}^{(k)}(U) : f(x)=0 \text{ for } x\in X\bigr\}.
\end{equation*}
It is clear that if $X=\emptyset$, then $\mathcal{I}^{(k)}_X$ is exactly the sheaf $\mathcal{R}^{(k)}$.

\begin{proposition}
    The presheaf $\mathcal{I}_X^{(k)}$ is a sheaf.
\end{proposition}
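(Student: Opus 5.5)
The plan is to check the two sheaf axioms directly, using the fact that $\mathcal{R}^{(k)}$ is already a sheaf. Here $X=\{q_0=0\}\cap\Omega$ is a fixed closed subset of $\Omega$, and I read the vanishing condition in $\mathcal{I}_X^{(k)}(U)$ as $f(x)=0$ for $x\in X\cap U$. First I verify that $\mathcal{I}_X^{(k)}$ is a sub-presheaf of $\mathcal{R}^{(k)}$. For open sets $V\subset U$, the restriction of $f\in\mathcal{R}^{(k)}(U)$ to $V$ is $k$-regular on $V$, because $\mathscr{D}_0^{(k)}$ (or $d^{0'}d^{1'}$ when $k=0$) is a local differential operator. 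It also vanishes on $X\cap V\subset X\cap U$. So the restriction maps of $\mathcal{R}^{(k)}$ carry $\mathcal{I}_X^{(k)}(U)$ into $\mathcal{I}_X^{(k)}(V)$, and the functoriality identities are inherited.

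The locality (identity) axiom follows from the inclusion. Let $U=\bigcup_\alpha U_\alpha$ and let $f,g\in\mathcal{I}_X^{(k)}(U)$ satisfy $f|_{U_\alpha}=g|_{U_\alpha}$ for every $\alpha$. Viewing $f,g$ as elements of $\mathcal{R}^{(k)}(U)$, which are ordinary functions, we get $f=g$ pointwise on $U$.

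For gluing, take a compatible family $f_\alpha\in\mathcal{I}_X^{(k)}(U_\alpha)$, meaning $f_\alpha=f_\beta$ on $U_\alpha\cap U_\beta$. Define $f$ on $U$ by $f(x)=f_\alpha(x)$ for $x\in U_\alpha$; compatibility makes this well defined. Since every point of $U$ has a neighbourhood $U_\alpha$ on which $f=f_\alpha$ is $C^1$ and satisfies $\mathscr{D}_0^{(k)}f=0$, the local nature of the operator gives $f\in\mathcal{R}^{(k)}(U)$. For the vanishing condition, any $x\in X\cap U$ lies in some $U_\alpha$, so $f(x)=f_\alpha(x)=0$. Thus $f\in\mathcal{I}_X^{(k)}(U)$, and it restricts to each $f_\alpha$.

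No step is a real obstacle. The only point needing care is the one fixed at the start: the vanishing condition must be local, tested on $X\cap U$ rather than on all of $X$. With that reading, vanishing on a set is a pointwise condition and is preserved under both restriction and gluing.
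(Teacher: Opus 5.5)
Your proposal is correct and follows essentially the same route as the paper: the identity axiom is immediate because sections are genuine functions, and gluing is checked pointwise, using the locality of $\mathscr{D}_0^{(k)}$ and the fact that each $x\in X$ lies in some member of the cover. Two of your steps are small clarifications that the paper's proof leaves implicit, since it only treats covers of $\Omega$ itself: the explicit check that restriction maps preserve $\mathcal{I}_X^{(k)}$, and reading the vanishing condition as $f=0$ on $X\cap U$.
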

\begin{proof}
    By the definition of a sheaf, it suffices to show that if $\Omega_\alpha$ is an open cover of $\Omega$, then $\mathcal{I}_X^{(k)}$ satisfies
    \begin{enumerate}
        \item If $f,g\in \mathcal{I}_X^{(k)}(\Omega)$ and $f=g$ on every $\Omega_\alpha$, then $f=g$ on $\Omega$;
        \item If $f_\alpha\in \mathcal{I}_X^{(k)}(\Omega_\alpha)$ and if for $\Omega_\alpha\cap\Omega_\beta\neq \emptyset$ we have
\[f_\alpha=f_\beta\quad \text{ on } \quad \Omega_\alpha\cap\Omega_\beta\]
for all $\alpha,\beta$, then there exists $F\in \mathcal{I}_X^{(k)}$ such that $F|_{\Omega_\alpha}=f_\alpha$.
    \end{enumerate}
The statement $(1)$ is obvious. Consider statement $(2)$. It is clear that there exists a function $F\in C^\infty(\Omega,\odot^k\mathbb{C}^2)$ such that $F|_{\Omega_\alpha}=f_\alpha$. It remains to show that $F\in \mathcal{I}_X^{(k)}(\Omega)$. For any $x\in\Omega$, there exists some $\Omega_\alpha$ containing $x$ such that
\[
(\mathscr{D}_0^{(k)}F)(x)=(\mathscr{D}_0^{(k)}f_\alpha)(x)=0.
\]
If in addition $x\in X$, then $F(x)=f_\alpha(x)=0$. Therefore, $F\in \mathcal{I}_X^{(k)}(\Omega)$. This completes the proof.
\end{proof}

Notice that the sheaf $\mathcal{I}_X^{(k)}$ is a subsheaf of $\mathcal{R}^{(k)}$. We then obtain a short exact sequence of sheaves
\begin{equation}\label{eq:short exact seq of IkX}
0\longrightarrow \mathcal{I}_X^{(k)}\longrightarrow \mathcal{R}^{(k)}\longrightarrow \mathcal{R}^{(k)}_X\longrightarrow 0.
\end{equation}
Here $\mathcal{R}_X^{(k)}$ denotes the quotient sheaf $\mathcal{R}^{(k)}/\mathcal{I}_X^{(k)}$. From the definition of the quotient sheaf, we recall that
\[
\mathcal{R}^{(k)}_{X,x}\cong \mathcal{R}^{(k)}_x/\mathcal{I}_{X,x}^{(k)}.
\]
For any open set $U\subset \Omega$,
\[
\mathcal{R}^{(k)}_{X}(U)=
\begin{cases}
\mathcal{R}^{(k)}(U\cap X), & \text{if } U\cap X \neq \emptyset,\\
0, & \text{if } U\cap X = \emptyset,
\end{cases}
\]
where $\mathcal{R}^{(k)}(U\cap X)$ represents the space of $k$-regular functions defined on $X\cap U\subset\mathbb{R}^{4n-4}$.
 For any open sets $V\subset U\subset \Omega$, the map $r^U_V$ is the corresponding restriction map.

It is clear that \(H^0(\Omega,\mathcal{R}^{(k)}_X)\cong \mathcal{R}^{(k)}(\Omega\cap X)\) coincides with the set of \(k\)-regular functions on \(\Omega\cap X\) for \(n>1\), and is isomorphic to the complex vector space \(\odot^k\mathbb{C}^2\) when \(n=1\). Note that the short exact sequence \eqref{eq:short exact seq of IkX} induces the following long exact sequence:
\[
0\longrightarrow H^0(\Omega,\mathcal{I}_X^{(k)})\longrightarrow H^0(\Omega,\mathcal{R}^{(k)})\stackrel{i'}\longrightarrow H^0(\Omega,\mathcal{R}^{(k)}_X)\stackrel{s'}\longrightarrow H^1(\Omega,\mathcal{I}_X^{(k)})\longrightarrow\cdots.
\]
If a \(k\)-regular function \(f\) on \(\Omega\cap X\) admits a \(k\)-regular extension to the entire space \(\Omega\), then there exists \(F\in H^0(\Omega,\mathcal{R}^{(k)})\) such that \(i'(F)=f\). This yields the following proposition.

\begin{proposition}\label{prop:extension}
    Let $\Omega\subset\mathbb{H}^n$ be a domain such that $X=\{q_0=0\}\cap \Omega\neq \emptyset$. If a $k$-regular function $f$ on $\Omega\cap X$ satisfies $s'(f)=0$, then there exists a function $F$ that is $k$-regular on $\Omega$ and $F|_X=f$. In particular, if $H^1(\Omega,\mathcal{I}_X^{(k)})=0$, then every $k$-regular function $f$ on $\Omega\cap X$ admits a $k$-regular extension to the entire domain $\Omega$.
\end{proposition}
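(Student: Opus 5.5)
This follows directly from exactness of the long exact cohomology sequence induced by \eqref{eq:short exact seq of IkX}. Take a $k$-regular function $f$ on $\Omega\cap X$ and regard it as an element of $H^0(\Omega,\mathcal{R}^{(k)}_X)\cong\mathcal{R}^{(k)}_X(\Omega)$. Since the sequence
\[
H^0(\Omega,\mathcal{R}^{(k)})\stackrel{i'}{\longrightarrow} H^0(\Omega,\mathcal{R}^{(k)}_X)\stackrel{s'}{\longrightarrow} H^1(\Omega,\mathcal{I}_X^{(k)})
\]
is exact at the middle term, the hypothesis $s'(f)=0$ gives $f\in\ker s'=\operatorname{Im} i'$. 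Hence there is some $F\in H^0(\Omega,\mathcal{R}^{(k)})=\mathcal{R}^{(k)}(\Omega)$ with $i'(F)=f$.

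We then interpret $i'(F)=f$ concretely. The map $i'$ is induced on global sections by the quotient morphism $\mathcal{R}^{(k)}\to\mathcal{R}^{(k)}/\mathcal{I}_X^{(k)}$. For each $x\in X$, the stalk $\mathcal{R}^{(k)}_{X,x}=\mathcal{R}^{(k)}_x/\mathcal{I}^{(k)}_{X,x}$ is identified with germs of restrictions to $X$: two germs differ by an element of $\mathcal{I}^{(k)}_{X,x}$ exactly when they agree on $X$ near $x$. This is the identification $\mathcal{R}^{(k)}_X(U)=\mathcal{R}^{(k)}(U\cap X)$ recorded before the proposition, under which the quotient map is restriction $g\mapsto g|_{U\cap X}$. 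Therefore $i'(F)=F|_X$, and $i'(F)=f$ says precisely that $F|_X=f$, i.e.\ $F$ is a $k$-regular extension of $f$. (When $n=1$, $X$ is a single point and this reduces to $F(0)=f\in\odot^k\mathbb{C}^2$.)

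The ``in particular'' statement follows at once: if $H^1(\Omega,\mathcal{I}_X^{(k)})=0$, then $s'$ is the zero map, so $s'(f)=0$ for every $f$ and the first part applies.

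The only step needing care is the identification $i'(F)=F|_X$. One has to check that the description of $\mathcal{R}^{(k)}_X(U)$ is compatible with sheafification, namely that a section of the quotient sheaf, which is locally a class of $k$-regular functions, really is a function on $U\cap X$. This holds because the local classes are determined by their values on $X$, and these agree on overlaps, so they glue to a single function.
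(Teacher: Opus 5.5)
Your proof is correct and follows the paper's route: exactness of the long exact sequence induced by $0\to\mathcal{I}_X^{(k)}\to\mathcal{R}^{(k)}\to\mathcal{R}^{(k)}_X\to 0$ at $H^0(\Omega,\mathcal{R}^{(k)}_X)$ gives $\ker s'=\operatorname{Im} i'$, and $i'$ is restriction to $X$ under the identification $\mathcal{R}^{(k)}_X(U)=\mathcal{R}^{(k)}(U\cap X)$. The paper states this in one line; your remark on why sections of the quotient sheaf glue to genuine functions on $U\cap X$ makes explicit what the paper takes for granted.
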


\section{The multiplication-like operators $L_A$}\label{section:L0L1}

Sudbery \cite{Sudbery79} observed that if $f$ is quaternionic regular, then $qf$ is harmonic. In previous work \cite{LZ26}, Li and Zhang constructed a pair of multiplication-type operators $L_0$ and $L_1$, which map $k$-regular functions on $\mathbb{R}^4$ to $(k-1)$-regular functions. In this section, we generalize these operators to $k$-regular functions over $\mathbb{R}^{4n}$.

Applying the embedding $\tau$ defined in \eqref{eq:z AAprime} to $-q_t\mathbf{j}$, we obtain
\begin{equation*}
\begin{aligned}
\tau(-q_t\mathbf{j})
&= \tau\bigl(x_{4t+2} + x_{4t+3}\mathbf{i} - x_{4t}\mathbf{j} - x_{4t+1}\mathbf{k}\bigr)\\
&= \begin{pmatrix}
x_{4t+2} + x_{4t+3}i & x_{4t} + x_{4t+1}i \\
-x_{4t} + x_{4t+1}i & x_{4t+2} - x_{4t+3}i
\end{pmatrix}\\
&= \begin{pmatrix}
z^{(2t+1)0'} & z^{(2t+1)1'} \\
-z^{(2t)0'} & -z^{(2t)1'}
\end{pmatrix}.
\end{aligned}
\end{equation*}

We denote the matrix $\tau(-q_t\mathbf{j})$ by
\begin{equation}\label{eq:def:zAAprime}
\begin{pmatrix}
z_{2t}^{0'} & z_{2t}^{1'}\\[4pt]
z_{2t+1}^{0'} & z_{2t+1}^{1'}
\end{pmatrix}.
\end{equation}

\begin{definition}\label{def:L_0L_1 on k-CF}
Let $k\geqslant1$ and let $\Omega$ be a domain in $\mathbb{R}^{4n}$.
For $A=0,1,\dots,2n-1$, the operators $L^{(k)}_A$ are defined on $C^\infty(\Omega,\odot^k\mathbb{C}^2)$ by
\begin{equation*}
\begin{aligned}
&L_A^{(k)}:C^\infty(\Omega,\odot^k\mathbb{C}^{2}\otimes\Lambda^\ast\mathbb{C}^{2n})\longrightarrow C^\infty(\Omega,\odot^{k-1}\mathbb{C}^2\otimes\Lambda^\ast\mathbb{C}^{2n}),\\
&(L^{(k)}_Af)_{A'_0\cdots A'_{k-2}}
:= z_A^{0'} f_{0'A'_0\cdots A'_{k-2}}
+ z_A^{1'} f_{1'A'_0\cdots A'_{k-2}}.
\end{aligned}
\end{equation*}
\end{definition}

Under the isomorphism $\eta$, we obtain the following computationally convenient formula:
\begin{equation}\label{eq:LA eta}
\eta\big(L_A^{(k)}f\big)_s=z_A^{0'}\eta(f)_s+z_A^{1'}\eta(f)_{s+1}
\end{equation}
which holds for all $s=0,1,\dots,k-1$.

Let $k\geqslant 1$. We define the operator
\[
D^{(k)}:C^\infty\big(\Omega,\odot^{k}\mathbb{C}^{2}\otimes \Lambda^\ast\mathbb{C}^{2n}\big)\longrightarrow C^\infty\big(\Omega,\odot^{k-1}\mathbb{C}^{2}\otimes \Lambda^\ast\mathbb{C}^{2n}\big)
\]
via the formula
\begin{equation}\label{eq:def Dk}
\eta(D^{(k)}f)_s:=d^{0'}\eta(f)_s+d^{1'}\eta(f)_{s+1},\quad s=0,\ldots,k-1.
\end{equation}
We note that for all $j<k$, the operators $\mathscr{D}_j^{(k)}$ share an identical structural form with $D^{(k-j)}$. To be more specific, under the isomorphism $\eta$, both $\mathscr{D}_j^{(k)}$ and $D^{(k-j)}$ can be expressed as $d^{0'}\eta(\cdot)_s+d^{1'}\eta(\cdot)_{s+1}$.

We generalize Theorem 1.1 and Proposition 3.2 in \cite{LZ26}.

\begin{proposition}\label{prop:Lj and k CF}
The following identities hold.
\begin{enumerate}
    \item The identity
    \[
    D^{(k-1)} L_A^{(k)}= L_A^{(k-1)} D^{(k)}
    \]
    holds for all $k\geqslant2, A=0,1,\ldots,2n-1$.
    \item Let $k\geqslant1$. Suppose $f\in C^\infty(\Omega,\odot^k\mathbb{C}^{2})$ is $k$-regular on a domain $\Omega\subset\mathbb{R}^{4n}$. Then $L_A^{(k)}f$ is $(k-1)$-regular on $\Omega$.
\end{enumerate}
\end{proposition}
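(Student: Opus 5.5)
The plan is to prove both parts by a direct Leibniz-rule computation in the $\eta$-picture. The only input is that the coefficient functions $z_A^{0'},z_A^{1'}$ from \eqref{eq:def:zAAprime} are real-linear in $x$, so that $d^{0'}$ and $d^{1'}$ send them to \emph{constant} $1$-forms with a special antisymmetry.

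\textbf{Step 1 (derivatives of the $z$'s).} From \eqref{preliminary:nabla AA prime} and \eqref{eq:def:zAAprime} I will tabulate $\nabla_B^{j'}z_A^{i'}$. Since $z_{2t}^{0'}=x_{4t+2}+ix_{4t+3}$, $z_{2t}^{1'}=x_{4t}+ix_{4t+1}$, and so on, each such derivative is a constant. It vanishes unless $B$ and $A$ belong to the same block $\{2t,2t+1\}$ with $B\neq A$. Sample values:
\[
\nabla_{2t}^{0'}z_{2t}^{0'}=0,\qquad \nabla_{2t+1}^{0'}z_{2t}^{1'}=2,\qquad \nabla_{2t+1}^{1'}z_{2t}^{0'}=-2.
\]
Put $\theta_A^{j'i'}:=d^{j'}z_A^{i'}=\sum_B\nabla_B^{j'}z_A^{i'}\,\omega^B$. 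The aim is to verify
\[
\theta_A^{0'0'}=\theta_A^{1'1'}=0,\qquad \theta_A^{0'1'}=-\theta_A^{1'0'}=:\theta_A,
\]
where $\theta_A$ is a constant $1$-form. This finite check over the four cases $A\in\{2t,2t+1\}$, $B\in\{2t,2t+1\}$ is routine but is the crux of the argument. If a sign fails, I would recheck the sign conventions for $\epsilon$ and $\tau(-q_t\mathbf{j})$ before anything else.

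\textbf{Step 2 (part (1)).} Write $f_s=\eta(f)_s$. For a scalar function $z$ and a form $g$ we have $d^{j'}(zg)=z\,d^{j'}g+(d^{j'}z)\wedge g$. Using \eqref{eq:LA eta} and \eqref{eq:def Dk},
\[
\eta(D^{(k-1)}L_A^{(k)}f)_s=d^{0'}(z_A^{0'}f_s+z_A^{1'}f_{s+1})+d^{1'}(z_A^{0'}f_{s+1}+z_A^{1'}f_{s+2}).
\]
The terms in which $z$ is not differentiated give exactly $z_A^{0'}(d^{0'}f_s+d^{1'}f_{s+1})+z_A^{1'}(d^{0'}f_{s+1}+d^{1'}f_{s+2})=\eta(L_A^{(k-1)}D^{(k)}f)_s$. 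The remaining terms are
\[
\theta_A^{0'0'}\wedge f_s+(\theta_A^{0'1'}+\theta_A^{1'0'})\wedge f_{s+1}+\theta_A^{1'1'}\wedge f_{s+2},
\]
and these vanish by Step 1. The index ranges also match: $s=0,\dots,k-2$ on both sides.

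\textbf{Step 3 (part (2)).} For $k\geqslant 3$, the operator $L_A^{(k)}f$ lands in $\odot^{k-1}\mathbb{C}^2$ with $k-1\geqslant 2$. By \eqref{eq:Djk under eta j small}, $\mathscr{D}_0^{(m)}$ has the same $\eta$-form as $D^{(m)}$, so part (1) gives $\mathscr{D}_0^{(k-1)}L_A^{(k)}f=L_A^{(k-1)}\mathscr{D}_0^{(k)}f=0$. The same argument covers $k=2$, since for $m=1$ the operator $\mathscr{D}_0^{(1)}$ is again $d^{0'}f_{0'}+d^{1'}f_{1'}$.

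For $k=1$, $L_Af=z^{0'}f_0+z^{1'}f_1$ is scalar, and I must show $d^{0'}d^{1'}L_Af=0$ given $d^{0'}f_0+d^{1'}f_1=0$. By Step 1,
\[
d^{1'}L_Af=z^{0'}d^{1'}f_0+z^{1'}d^{1'}f_1-\theta_A f_0.
\]
Applying $d^{0'}$, using $d^{0'}z^{0'}=0$ and $d^{0'}z^{1'}=\theta_A$, and noting that $\theta_A$ is constant, we get
\[
z^{0'}d^{0'}d^{1'}f_0+z^{1'}d^{0'}d^{1'}f_1+\theta_A\wedge(d^{1'}f_1+d^{0'}f_0).
\]
The last term vanishes by $1$-regularity. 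By \eqref{eq:prop of d0 and d1},
\[
d^{0'}d^{1'}f_0=-d^{1'}d^{0'}f_0=d^{1'}d^{1'}f_1=0,\qquad d^{0'}d^{1'}f_1=-d^{0'}d^{0'}f_0=0.
\]
Hence $L_A^{(1)}f$ is $0$-regular.

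The main obstacle is Step 1, specifically getting the relation $\theta_A^{0'1'}=-\theta_A^{1'0'}$ exactly right; everything else is formal.
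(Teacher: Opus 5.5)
Your proposal is correct and takes essentially the same route as the paper. Your Step 1 is exactly the content of Lemma~\ref{lem:computation z}: the identity $\nabla_A^{A'}z_B^{B'}=-2\epsilon^{B'A'}\epsilon^{BA}$ makes each $d^{j'}z_A^{i'}$ a constant $1$-form proportional to $\epsilon^{i'j'}$, so your sample values and the relations $\theta_A^{0'0'}=\theta_A^{1'1'}=0$, $\theta_A^{0'1'}=-\theta_A^{1'0'}$ check out; the paper then, like you, cancels the extra Leibniz terms in part (1) by antisymmetry of $\epsilon$, derives part (2) from (1) for $k\geqslant 2$, and settles $k=1$ by the same direct $d^{0'}d^{1'}$ computation.
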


 Before presenting the proof, we first state a useful lemma for subsequent computations.
\begin{lemma}\label{lem:computation z}\cite[Lemma 3.3]{Wang17}
\begin{enumerate}
\item $\nabla_{AA^\prime} z^{BB^\prime}=2\delta_A^{B}\delta_{A^\prime}^{B^\prime}$.
\item We define $\epsilon^{BA}$ by
\begin{equation}\label{eq:def:epsilon BA}
\epsilon^{BA}:=\begin{cases} -1, & \qquad B=2t,A=2t+1, t=0,\ldots,n-1,\\
1, & \qquad B=2t+1,A=2t, t=0,\ldots,n-1,\\
0, & \qquad \text{ otherwise.}
\end{cases}
\end{equation}
It then follows that
\[\nabla_A^{A^\prime}z_{B}^{B^\prime}=-2\epsilon^{B^\prime A^\prime}\epsilon^{BA}.\]
\item Let $f\in C^1(\mathbb{R}^{4n},\Lambda^p\mathbb{C}^{2n})$. Then the equation
\[d^{j^\prime}(z_{A}^{A^\prime}f)=z_A^{A^\prime}d^{j^\prime}f-\sum_{B=0}^{2n-1}2\epsilon^{BA}\epsilon^{j^\prime A^\prime}\omega^B\wedge f\]
holds for all $j^\prime,A^\prime=0^\prime,1^\prime$ and $A=0,1,\ldots,2n-1$.
\end{enumerate}
\end{lemma}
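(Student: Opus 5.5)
The plan is to verify the three identities in order, each feeding into the next. Identity (1) is a direct coordinate check. Identity (2) follows from (1) once the lower-index coordinates $z_B^{B'}$ are expressed through the upper-index ones $z^{CB'}$. Identity (3) is the Leibniz rule combined with (2).

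For (1), I would read off the coordinates from \eqref{eq:z AAprime}. Since $\tau(q_t)$ has entries $\overline{z^{AA'}}$, we get
\[
z^{(2t)0'}=x_{4t}-ix_{4t+1},\quad z^{(2t)1'}=-x_{4t+2}+ix_{4t+3},\quad z^{(2t+1)0'}=x_{4t+2}+ix_{4t+3},\quad z^{(2t+1)1'}=x_{4t}+ix_{4t+1}.
\]
The operators $\nabla_{AA'}$ with $A\in\{2t,2t+1\}$ involve only $x_{4t},\dots,x_{4t+3}$. Hence $\nabla_{AA'}z^{BB'}=0$ whenever $A$ and $B$ belong to different blocks $t$. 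Within one block there are sixteen first-order computations. For example, $\nabla_{(2t)0'}z^{(2t)0'}=(\partial_{x_{4t}}+i\partial_{x_{4t+1}})(x_{4t}-ix_{4t+1})=2$, while $\nabla_{(2t)0'}z^{(2t+1)1'}=1+i\cdot i=0$. Checking all sixteen yields $2\delta_A^B\delta_{A'}^{B'}$.

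For (2), the key observation is the relation between the two coordinate systems. Comparing the matrix $\tau(-q_t\mathbf{j})$ with \eqref{eq:def:zAAprime} gives $z_{2t}^{B'}=z^{(2t+1)B'}$ and $z_{2t+1}^{B'}=-z^{(2t)B'}$. With $\epsilon^{BC}$ as in \eqref{eq:def:epsilon BA}, this can be written uniformly as
\[
z_B^{B'}=-\sum_{C}\epsilon^{BC}z^{CB'}.
\]
Raising the index on $\nabla$ via $\nabla_A^{A'}=\sum_{C'}\nabla_{AC'}\epsilon^{C'A'}$ and using (1), we obtain
\[
\nabla_A^{A'}z_B^{B'}=-\sum_{C,C'}\epsilon^{C'A'}\epsilon^{BC}\,2\delta_A^C\delta_{C'}^{B'}=-2\epsilon^{B'A'}\epsilon^{BA}.
\]
The only real care needed here is tracking the index order and signs of $\epsilon$ in the two raising conventions.

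For (3), write $f=\sum_I f_I\,\omega^I$ and apply the definition \eqref{eq:def of d0d1}. The product rule gives
\[
d^{j'}(z_A^{A'}f)=z_A^{A'}d^{j'}f+\sum_{C}(\nabla_C^{j'}z_A^{A'})\,\omega^C\wedge f.
\]
By (2), $\nabla_C^{j'}z_A^{A'}=-2\epsilon^{A'j'}\epsilon^{AC}$. Both $\epsilon$'s are antisymmetric, so $\epsilon^{A'j'}\epsilon^{AC}=\epsilon^{j'A'}\epsilon^{CA}$. Renaming $C$ as $B$ gives exactly the stated formula.

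I expect the main obstacle to be purely bookkeeping: confirming the sign convention in $z_B^{B'}=-\sum_C\epsilon^{BC}z^{CB'}$ and the orientation of $\epsilon$ in the raising of indices. I would double-check both against one explicit entry, for instance $\nabla_{2t}^{0'}z_{2t+1}^{0'}$ computed directly from \eqref{preliminary:nabla AA prime}.
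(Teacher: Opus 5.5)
Your proof is correct. Parts (1) and (3) match the paper, and part (2) takes a genuinely different route.

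For (1), the paper simply cites \cite{Wang17}. Your sixteen same-block checks, together with the vanishing across blocks, do establish it; I confirmed that every entry comes out as $2\delta_A^B\delta_{A'}^{B'}$. For (3), the paper uses exactly your Leibniz-rule argument plus the antisymmetry of both $\epsilon$'s.

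For (2), the paper splits into cases. When $A$ and $B$ lie in different blocks the derivative vanishes by inspection. The same-block case is a direct computation from \eqref{preliminary:nabla AA prime} and \eqref{eq:def:zAAprime}, which the paper defers to \cite[Lemma 3.3]{LZ26}.

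You instead note that the lower-index coordinates come from the upper-index ones via $z_B^{B'}=-\sum_C\epsilon^{BC}z^{CB'}$. This is right: it encodes $z_{2t}^{B'}=z^{(2t+1)B'}$ and $z_{2t+1}^{B'}=-z^{(2t)B'}$. You then combine it with $\nabla_A^{A'}=\sum_{C'}\nabla_{AC'}\epsilon^{C'A'}$ and with (1).

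This makes (2) a formal consequence of (1). The block structure is carried automatically by $\delta_A^C$ and $\epsilon^{BA}$, so you need neither a case split nor a second round of coordinate checks. It also explains why the answer has the product form $\epsilon^{B'A'}\epsilon^{BA}$. The paper's route avoids finding the index-lowering identity, at the price of checking the same-block entries by hand.

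One small remark: the entry you propose as a sign check, $\nabla_{2t}^{0'}z_{2t+1}^{0'}$, is zero on both sides because $\epsilon^{0'0'}=0$. It therefore cannot detect a sign error. A nonzero entry would: for example, $\nabla_{2t}^{0'}z_{2t+1}^{1'}=(-\partial_{x_{4t+2}}-i\partial_{x_{4t+3}})(x_{4t+2}-ix_{4t+3})=-2$. This agrees with $-2\epsilon^{1'0'}\epsilon^{BA}$ for $(B,A)=(2t+1,2t)$.
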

\begin{proof}
The assertion $(1)$ corresponds exactly to Lemma $3.3$ in \cite{Wang17}. We split the proof of assertion $(2)$ into two cases. If $A\in \{2s,2s+1\}$, $B\in\{2t,2t+1\}$ and $s\neq t$, then from the definitions of $\nabla_A^{A^\prime}$ \eqref{preliminary:nabla AA prime} and $z_A^{A^\prime}$ \eqref{eq:def:zAAprime}, we have
\[
\nabla_A^{A^\prime}z_B^{B^\prime}=0.
\]
The case where $A\in \{2s,2s+1\}$ and $B\in\{2s,2s+1\}$ can be proved similarly to the proof of Lemma 3.3 in \cite{LZ26}.
Assertion $(3)$ follows from $(2)$ and the definition of $d^{j^\prime}$ (see equation \eqref{eq:def of d0d1}).
\end{proof}

 \emph{Proof of Proposition \ref{prop:Lj and k CF}.}
We first prove the assertion $(1)$. Based on the definition of $L_A^{(k)}$ (see Definition \ref{def:L_0L_1 on k-CF}), the definition of $\eta$ in \eqref{eq:def:eta}, and the definition of $D^{(k)}$ in \eqref{eq:def Dk}, the identity below holds for all integers $s = 0,1,\dots,k-2$:
\[
\begin{aligned}
\eta\bigl(D^{(k-1)}(L_A^{(k)}f)\bigr)_{s}
&= d^{0^\prime}\eta(L_A^{(k)}f)_{s}
+ d^{1^\prime}\eta(L_A^{(k)}f)_{s+1} \\
&= d^{0^\prime}\bigl(z_A^{0^\prime}\eta(f)_{s}
+ z_A^{1^\prime}\eta(f)_{s+1}\bigr) + d^{1^\prime}\bigl(z_A^{0^\prime}\eta(f)_{s+1}
+ z_A^{1^\prime}\eta(f)_{s+2}\bigr).
\end{aligned}
\]
Applying Lemma \ref{lem:computation z}, we further obtain
\begin{equation}\label{eq:prop Lj and k CF: k geq 2:main}
\begin{aligned}
 \eta\bigl(D^{(k-1)}(L_A^{(k)}f)\bigr)_{s} &=z_A^{0^\prime}\left(d^{0^\prime}\eta(f)_{s}+d^{1^\prime}\eta(f)_{s+1}\right)+z_A^{1^\prime}\left(d^{0^\prime}\eta(f)_{s+1}+d^{1^\prime}\eta(f)_{s+2}\right)\\
 &\quad -2\sum_{B=0}^{2n-1}\left(\epsilon^{0^\prime 0^\prime}\epsilon^{AB}\omega^B\wedge \eta(f)_{s}+\epsilon^{1^\prime 0^\prime}\epsilon^{AB}\omega^B\wedge \eta(f)_{s+1}\right.\\
 &\quad +\left.\epsilon^{0^\prime 1^\prime}\epsilon^{AB}\omega^B\wedge \eta(f)_{s+1}+\epsilon^{1^\prime 1^\prime}\epsilon^{AB}\omega^B\wedge \eta(f)_{s+2}\right).
 \end{aligned}
\end{equation}

Recall the formula \eqref{eq:Djk under eta j small} and note that
\[
\epsilon=\begin{pmatrix}
      0 & -1\\
    1  & 0
\end{pmatrix}.
\]
Combining the foregoing identities with \eqref{eq:prop Lj and k CF: k geq 2:main}, we arrive at
\[
\begin{aligned}
\eta\bigl(D^{(k-1)}L_A^{(k)}f\bigr)_s&=z_A^{0^\prime}\left(d^{0^\prime}\eta(f)_{s}+d^{1^\prime}\eta(f)_{s+1}\right)+z_A^{1^\prime}\left(d^{0^\prime}\eta(f)_{s+1}+d^{1^\prime}\eta(f)_{s+2}\right)\\
&=z_A^{0'}\eta\bigl(D^{(k)}f\bigr)_s+z_A^{1'}\eta\bigl(D^{(k)}f\bigr)_{s+1}=\eta\bigl(L_A^{(k-1)}D^{(k)}f\bigr)_s.
\end{aligned}
\]

Now we prove assertion $(2)$. The case $k\geqslant 2$ follows directly from assertion $(1)$. It suffices to prove the case $k=1$. Let $\Omega\subset\mathbb{R}^{4n}$ be a domain, and let $f\in C^1(\Omega,\mathbb{C}^2)$ be a $1$-regular function on $\Omega$.
We show that
\[
d^{0^\prime}d^{1^\prime}\bigl(z_A^{0^\prime}f_{0^\prime}+z_A^{1^\prime}f_{1^\prime}\bigr)=0.
\]
By Lemma \ref{lem:computation z},
\begin{align*}
d^{0^\prime}d^{1^\prime}\bigl(z_A^{0^\prime}f_{0^\prime}+z_A^{1^\prime}f_{1^\prime}\bigr)
&= z_A^{0^\prime}d^{0^\prime}d^{1^\prime}f_{0^\prime}
+ z_A^{1^\prime}d^{0^\prime}d^{1^\prime}f_{1^\prime} \\
&\quad+ 2\sum_{B=0}^{2n-1}\Bigl(
\epsilon^{0^\prime 1^\prime}\epsilon^{AB}\omega^B\wedge d^{0^\prime}f_{0^\prime}
- \epsilon^{1^\prime 0^\prime}\epsilon^{AB}\omega^B\wedge d^{1^\prime}f_{1^\prime}
\Bigr)\\
&= z_A^{0^\prime}d^{0^\prime}d^{1^\prime}f_{0^\prime}
+ z_A^{1^\prime}d^{0^\prime}d^{1^\prime}f_{1^\prime}- 2\sum_{B=0}^{2n-1}\epsilon^{AB}\omega^B\wedge
\bigl(d^{0^\prime}f_{0^\prime}+d^{1^\prime}f_{1^\prime}\bigr)\\
&=z_A^{0^\prime}d^{0^\prime}d^{1^\prime}f_{0^\prime}
+ z_A^{1^\prime}d^{0^\prime}d^{1^\prime}f_{1^\prime}.
\end{align*}
On the other hand, recall the properties of $d^{0'}$ and $d^{1'}$ stated in \eqref{eq:prop of d0 and d1}. We deduce that
\begin{equation*}
    \begin{aligned}
     &d^{0'}d^{1'}f_{0'}=-d^{1'}(d^{0'}f_{0'}+d^{1'}f_{1'})=0,\\
     &d^{0'}d^{1'}f_{1'}=d^{0'}(d^{0'}f_{0'}+d^{1'}f_{1'})=0.
    \end{aligned}
\end{equation*}
The above equations imply that
\[d^{0'}d^{1'}\bigl(z_A^{0'}f_{0'} + z_A^{1'}f_{1'}\bigr)=0,\]
which finishes the proof. \qed

\begin{proposition}\label{prop: Lj Li commutative}
    Let $k\geqslant 2$. For any fixed indices $A,B=0,1,\ldots,2n-1$, we have
    \[L_A^{(k-1)}L_B^{(k)}=L_B^{(k-1)}L_A^{(k)}.\]
\end{proposition}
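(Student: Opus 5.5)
The plan is to verify the identity directly from Definition~\ref{def:L_0L_1 on k-CF}, most conveniently through the $\eta$-formula \eqref{eq:LA eta}. Since the $z_A^{A'}$ are scalar (complex-valued) functions, they commute with each other and with the form-valued components $\eta(f)_s$. No differentiation is involved, so Lemma~\ref{lem:computation z} is not needed. The argument is purely algebraic.

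Fix $f\in C^\infty(\Omega,\odot^k\mathbb{C}^2\otimes\Lambda^\ast\mathbb{C}^{2n})$ and $s=0,\dots,k-2$. Applying \eqref{eq:LA eta} twice gives
\begin{align*}
\eta\bigl(L_A^{(k-1)}L_B^{(k)}f\bigr)_s
&= z_A^{0'}\eta(L_B^{(k)}f)_s + z_A^{1'}\eta(L_B^{(k)}f)_{s+1}\\
&= z_A^{0'}z_B^{0'}\eta(f)_s + \bigl(z_A^{0'}z_B^{1'}+z_A^{1'}z_B^{0'}\bigr)\eta(f)_{s+1} + z_A^{1'}z_B^{1'}\eta(f)_{s+2}.
\end{align*}
The coefficients $z_A^{0'}z_B^{0'}$, $z_A^{0'}z_B^{1'}+z_A^{1'}z_B^{0'}$ and $z_A^{1'}z_B^{1'}$ are each symmetric under $A\leftrightarrow B$. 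Hence the right-hand side equals $\eta\bigl(L_B^{(k-1)}L_A^{(k)}f\bigr)_s$. Because $\eta$ is an isomorphism, the identity follows.

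Equivalently, one can work in index notation without $\eta$. There
\[
(L_A^{(k-1)}L_B^{(k)}f)_{A'_0\cdots A'_{k-3}}=\sum_{C',D'} z_A^{C'}z_B^{D'}f_{C'D'A'_0\cdots A'_{k-3}},
\]
and symmetry of $f$ in $C',D'$ together with commutativity of the scalars gives the result after relabelling $C'\leftrightarrow D'$. For $k=2$ the remaining index string is empty and the same computation applies.

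The only point needing care, which is not a real obstacle, is checking that \eqref{eq:LA eta} is valid on the index ranges used. Specifically, $L_B^{(k)}f$ lies in $\odot^{k-1}\mathbb{C}^2$, so $\eta(L_B^{(k)}f)_{s+1}$ is defined for $s+1\le k-1$. This holds since $s\le k-2$.
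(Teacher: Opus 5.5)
Your proposal is correct and follows essentially the same route as the paper: apply \eqref{eq:LA eta} twice, expand, and observe that the resulting coefficients $z_A^{0'}z_B^{0'}$, $z_A^{0'}z_B^{1'}+z_A^{1'}z_B^{0'}$, $z_A^{1'}z_B^{1'}$ are symmetric in $A\leftrightarrow B$. The only difference is cosmetic. The paper first does $k=2$ in components, using $f_{0'1'}=f_{1'0'}$, and then uses the $\eta$-form for $k\geqslant 3$, whereas you handle all $k\geqslant 2$ uniformly, which is valid since the $\eta$-formula already covers $s=0$ when $k=2$.
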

\begin{proof}
The proof of this proposition is analogous to that of Proposition 3.2 in \cite{LZ26}. Let \(f\in C^\infty(\Omega,\odot^2\mathbb{C}^2)\). Then
\begin{equation}\label{eq:prop L0L1:proof of 1:1}
\begin{aligned}
L^{(1)}_AL^{(2)}_Bf
&= z_A^{0^\prime}(L_B^{(2)}f)_{0^\prime}+z_A^{1^\prime}(L_B^{(2)}f)_{1^\prime}\\[4pt]
&= z_A^{0^\prime}\bigl(z_B^{0^\prime} f_{0^\prime0^\prime}+z_B^{1^\prime} f_{1^\prime0^\prime}\bigr)
 + z_A^{1^\prime}\bigl(z_B^{0^\prime} f_{0^\prime1^\prime}+z_B^{1^\prime} f_{1^\prime1^\prime}\bigr)\\[4pt]
&= z_B^{0^\prime}\bigl(z_A^{0^\prime}f_{0^\prime0^\prime}+z_A^{1^\prime}f_{1^\prime0^\prime}\bigr)
 + z_{B}^{1^\prime}\bigl(z_A^{0^\prime}f_{0^\prime1^\prime}+z_A^{1^\prime}f_{1^\prime1^\prime}\bigr)\\[4pt]
&= z_B^{0^\prime}(L_A^{(2)}f)_{0^\prime}+z_B^{1^\prime}(L_A^{(2)}f)_{1^\prime}
 = L_B^{(1)}L_A^{(2)}f.
\end{aligned}
\end{equation}
This proves the statement for the case \(k=2\).

The proof for the cases \(k\geqslant 3\) and \(f\in C^\infty(\Omega,\odot^k\mathbb{C}^2)\) follows by applying identity \eqref{eq:prop L0L1:proof of 1:1} to the term \(\eta\bigl(L^{(k-1)}_AL^{(k)}_B f\bigr)_{s}\). More precisely, we have
\begin{equation*}
\begin{aligned}
\eta\bigl(L^{(k-1)}_AL^{(k)}_B f\bigr)_{s}
&= z_A^{0^\prime}\bigl(z_B^{0^\prime} \eta(f)_{s}
+ z_B^{1^\prime} \eta(f)_{s+1}\bigr)+ z_A^{1^\prime}\bigl(z_B^{0^\prime} \eta(f)_{s+1}
+ z_B^{1^\prime} \eta(f)_{s+2}\bigr) \\
&= z_B^{0^\prime}\bigl(z_A^{0^\prime}\eta(f)_{s}
+ z_A^{1^\prime}\eta(f)_{s+1}\bigr) + z_B^{1^\prime}\bigl(z_A^{0^\prime}\eta(f)_{s+1}+z_A^{1^\prime}\eta(f)_{s+2}\bigr)\\
&= \eta\bigl(L^{(k-1)}_BL^{(k)}_Af\bigr)_{s}.
\end{aligned}
\end{equation*}
\end{proof}

\section{A Koszul resolution of $\mathcal{I}_X^{(k)}$}\label{section:Koszul complex}

In this section we prove Theorem~\ref{thm:main}.  We first define the sheaf morphisms \(\widetilde{\mathscr{L}}^{(k)}\) and \(\mathscr{L}^{(k)}\) in terms of the operators \(L_A^{(k)}\), and show that they form a complex resolving \(\mathcal{I}_X^{(k)}\).  Verifying exactness reduces to a stalkwise computation: away from \(X\) we use the invertibility of the coefficient matrix, while on \(X\) we expand $k$-regular functions in the basis \(P^{\mathbf{m}}_{s,j}\) of \cite{KW13}.

We define operators $\mathscr{L}^{(k)}$ and $\widetilde{\mathscr{L}}^{(k)}$ as follows:
\begin{definition}
    Let $k\geqslant0$ and $\Omega\subset\mathbb{R}^{4n}$ be a domain. We define operators ${\mathscr{L}}^{(k)}$ and $\widetilde{\mathscr{L}}^{(k)}$ as follows:
    \[
    \begin{aligned}
&\mathscr{L}^{(k)}:\mathcal{R}^{(k+1)}\oplus\mathcal{R}^{(k+1)}\longrightarrow \mathcal{I}_X^{(k)}, \qquad  \left(\mathscr{L}^{(k)}\left(   f_0,  f_1 \right)^T\right)_x:=\left(L^{(k+1)}_0 f_0+L^{(k+1)}_1 f_1\right)_x,\\
    &\widetilde{\mathscr{L}}^{(k)}:\mathcal{R}^{(k+2)}\longrightarrow \mathcal{R}^{(k+1)}\oplus\mathcal{R}^{(k+1)}, \qquad \left(\widetilde{\mathscr{L}}^{(k)}g\right)_x:= \left( (-L^{(k+2)}_1 g)_x,   (L^{(k+2)}_0 g)_x\right)^T.
    \end{aligned}
    \]
    where $x\in \Omega$.
\end{definition}

We now verify exactness on stalks, splitting the proof into the two cases: \(x\notin X\) and \(x\in X\).

\subsection{Proof of Theorem \ref{thm:main}: Case $x\notin X$}

In this section, we suppose $x \notin X$. Under this assumption, the sheaf $\mathcal{I}_X^{(k)}$ coincides with $\mathcal{R}^{(k)}$. We aim to prove that the sequence
\[
0\longrightarrow \mathcal{R}^{(k+2)}_x\xrightarrow{\widetilde{\mathscr{L}}^{(k)}}\mathcal{R}^{(k+1)}_x\oplus\mathcal{R}^{(k+1)}_x\xrightarrow{\mathscr{L}^{(k)}}\mathcal{R}_{x}^{(k)}\longrightarrow 0
\]
is exact.

The proof is divided into several steps.

\textbf{Step 1: The injectivity of $\widetilde{\mathscr{L}}^{(k)}_x$.}

Let $U$ be an open neighborhood of $x$ such that $U\cap X=\emptyset$. Let $p\geqslant1$. We shall establish the following stronger assertion: if $h\in C^\infty\big(U,\odot^{p}\mathbb{C}^2\otimes \Lambda^\ast\mathbb{C}^{2n}\big)$ obeys the identities $L_0^{(p)}h = L_1^{(p)}h = 0$, then $h$ vanishes identically on $U$.

Assume that $\eta(L_0^{(p)}h)_{s} = \eta(L_1^{(p)}h)_{s} = 0$ holds for each $s=0,1,\ldots p-1$. It follows from equation \eqref{eq:LA eta} that
\[
\begin{pmatrix}
-z_1^{0'} & -z_1^{1'}\\[4pt]
z_0^{0'}  &  z_0^{1'}
\end{pmatrix}
\begin{pmatrix}
\eta(h)_{s}\\[4pt]
\eta(h)_{s+1}
\end{pmatrix}
= 0.
\]
Note that the determinant of the matrix
\[
\begin{pmatrix}
-z_1^{0'} & -z_1^{1'}\\[4pt]
z_0^{0'}  &  z_0^{1'}
\end{pmatrix}
\]
is equal to $z_0^{0'}z_1^{1'} - z_0^{1'}z_1^{0'} = |q_0|^2$. It follows that
\[
\eta(h)_{s} = \eta(h)_{s+1} = 0
\]
for all indices $s$ on $U$. Hence $h = 0$.

\textbf{Step 2: The exactness of $\mathcal{R}^{(k+2)}_x\xrightarrow{\widetilde{\mathscr{L}}^{(k)}}\mathcal{R}^{(k+1)}_x\oplus\mathcal{R}^{(k+1)}_x\xrightarrow{\mathscr{L}^{(k)}}\mathcal{R}_{x}^{(k)}$.}

Let $U$ be an open neighborhood of $x$ such that $U\cap X=\emptyset$. We now prove a stronger result. Suppose $f,g\in C^\infty\big(U,\odot^{k+1}\mathbb{C}^2\otimes \Lambda^\ast\mathbb{C}^{2n}\big)$ satisfy
\begin{equation}\label{eq:thm main: x notin X:mid:assume of f and g}
D^{(k+1)}f=D^{(k+1)}g=0, \qquad L_0^{(k+1)}f+L_1^{(k+1)}g=0.
\end{equation}
Clearly, every pair of $(k+1)$-regular functions $f,g$ with $\mathscr{L}^{(k)}(f,g)^T=0$ obeys the above system of equations. We aim to construct a function
\[h\in C^\infty\big(U,\odot^{k+2}\mathbb{C}^2\otimes \Lambda^\ast\mathbb{C}^{2n}\big)\]
satisfying
\begin{equation}\label{eq:thm main: x notin X:mid:goal h}
f=-L_1^{(k+2)}h,\qquad g=L_0^{(k+2)}h,\qquad D^{(k+2)}h=0.
\end{equation}
It is straightforward to verify that if $f,g\in \mathcal{R}^{(k+1)}(U)$, then the function $h$ constructed as above lies in $\mathcal{R}^{(k+2)}(U)$ and satisfies
\[\widetilde{\mathscr L}^{(k)}h=(f,g)^T.\]

Assume that $f,g$ satisfies equation \eqref{eq:thm main: x notin X:mid:assume of f and g}. Then $h$ satisfies the condition in \eqref{eq:thm main: x notin X:mid:goal h} if and only if equality
\begin{equation*}
\begin{pmatrix}
-z_1^{0'} & -z_1^{1'}\\
z_0^{0'} & z_0^{1'}
\end{pmatrix}
\begin{pmatrix}
\eta(h)_s\\
\eta(h)_{s+1}
\end{pmatrix}=
\begin{pmatrix}
\eta(f)_s\\
\eta(g)_s
\end{pmatrix},
\end{equation*}
holds for all $s=0,1,\dots,k+1$. Consequently, we define $\eta(h)$ by
\begin{equation*}
\begin{pmatrix}
\eta(h)_s\\
\eta(h)_{s+1}
\end{pmatrix}= \frac{1}{\vert q_0\vert^2}
\begin{pmatrix}
z_0^{1'} & z_1^{1'}\\
-z_0^{0'} & -z_1^{0'}
\end{pmatrix}
\begin{pmatrix}
\eta(f)_s\\
\eta(g)_s
\end{pmatrix}.
\end{equation*}
It should be noted that for each $s=1,\dots,k+1$, the equality
\begin{equation}\label{eq:thm main: x notin X:mid:1}
\eta(h)_s=\frac{1}{\vert q_0\vert^2}\big(z_0^{1'}\eta(f)_{s}+z_1^{1'}\eta(g)_s\big)
\end{equation}
holds, and we also have
\[
\eta(h)_s=\frac{1}{\vert q_0\vert^2}\big(-z_0^{0'}\eta(f)_{s-1}-z_1^{0'}\eta(g)_{s-1}\big).
\]
It is necessary to verify that
\begin{equation}\label{eq:thm main: x notin X:mid:2}
z_0^{1'}\eta(f)_{s}+z_1^{1'}\eta(g)_s+z_0^{0'}\eta(f)_{s-1}+z_1^{0'}\eta(g)_{s-1}=0.
\end{equation}
Combining the condition \eqref{eq:thm main: x notin X:mid:assume of f and g} and equation \eqref{eq:LA eta}, the above identity follows directly.

Finally, we verify that $D^{(k+2)}h=0$. Define
\[H:=\eta(h),\qquad F:=\eta(f),\qquad G:=\eta(g). \]
Combining with \eqref{eq:thm main: x notin X:mid:1}, we obtain
\begin{equation*}
\begin{aligned}
d^{0'}H_s+d^{1'}H_{s+1}
&=d^{0'}\left(\frac{1}{|q_0|^2}\big(z_0^{1'}F_{s}+z_1^{1'}G_s\big)\right)+d^{1'}\left(\frac{1}{|q_0|^2}\big(z_0^{1'}F_{s+1}+z_1^{1'}G_{s+1}\big)\right)\\
&=\frac{z_0^{1'}}{|q_0|^2}\big(d^{0'}F_s+d^{1'}F_{s+1}\big)+\frac{z_1^{1'}}{|q_0|^2}\big(d^{0'}G_s+d^{1'}G_{s+1}\big)\\
&\quad +\left(d^{0'}\frac{z_0^{1'}}{|q_0|^2}\right)\wedge F_s+\left(d^{0'}\frac{z_1^{1'}}{|q_0|^2}\right)\wedge G_s\\
&\quad+\left(d^{1'}\frac{z_0^{1'}}{|q_0|^2}\right)\wedge F_{s+1}+\left(d^{1'}\frac{z_1^{1'}}{|q_0|^2}\right)\wedge G_{s+1}.
\end{aligned}
\end{equation*}
Since $D^{(k+1)}f=D^{(k+1)}g=0$, we have
\[d^{0'}F_s+d^{1'}F_{s+1}=0,\qquad d^{0'}G_s+d^{1'}G_{s+1}=0. \]
Hence, it suffices to prove that
\[\left(d^{0'}\frac{z_0^{1'}}{|q_0|^2}\right)\wedge F_s+\left(d^{0'}\frac{z_1^{1'}}{|q_0|^2}\right)\wedge G_s+\left(d^{1'}\frac{z_0^{1'}}{|q_0|^2}\right)\wedge F_{s+1}+\left(d^{1'}\frac{z_1^{1'}}{|q_0|^2}\right)\wedge G_{s+1}=0.\]
Recall the definitions of $z_A^{A'}$ in \eqref{eq:def:zAAprime} and of $\nabla_A^{A'}$ in \eqref{preliminary:nabla AA prime}. We have
\begin{equation*}
\nabla_A^{A'}|q_0|^2=-2z_A^{A'}, \qquad A=0,1,\quad A'=0',1'.
\end{equation*}
Combining this identity with Lemma \ref{lem:computation z}, we obtain
\begin{align*}
&|q_0|^4\left(\left(d^{0'}\frac{z_0^{1'}}{|q_0|^2}\right)\wedge F_s+\left(d^{0'}\frac{z_1^{1'}}{|q_0|^2}\right)\wedge G_s+\left(d^{1'}\frac{z_0^{1'}}{|q_0|^2}\right)\wedge F_{s+1}+\left(d^{1'}\frac{z_1^{1'}}{|q_0|^2}\right)\wedge G_{s+1}\right)\\
&=2\big(z_0^{1'}(z_0^{0'}\omega^0+z_1^{0'}\omega^1)+|q_0|^2\omega^1\big)\wedge F_s
+2\big(z_1^{1'}(z_0^{0'}\omega^0+z_1^{0'}\omega^1)-|q_0|^2\omega^0\big)\wedge G_s\\
&\quad +2z_0^{1'}(z_0^{1'}\omega^0+z_1^{1'}\omega^1)\wedge F_{s+1}+2z_1^{1'}(z_0^{1'}\omega^0+z_1^{1'}\omega^1)\wedge G_{s+1}\\
&=2\big(z_0^{1'}\omega^0+z_1^{1'}\omega^1\big)\wedge \big(z_0^{0'}F_s+z_1^{0'}G_s+z_0^{1'}F_{s+1}+z_1^{1'}G_{s+1}\big)=0.
\end{align*}
The final equality holds due to \eqref{eq:thm main: x notin X:mid:2}.

\textbf{Step 3: The surjectivity of $\mathscr{L}^{(k)}_x$.}

Let $U$ be a convex neighborhood of $x$ with $U\cap X=\emptyset$ and let $h\in \mathcal{R}^{(k)}(U)$.

  Recall the definitions of $\mathscr{L}^{(k)}$ and equation \eqref{eq:LA eta}. We have
\begin{equation*}
\begin{aligned}
\begin{pmatrix}
\eta\big(\mathscr{L}^{(k)}(f,g)^T\big)_0 \\
\vdots \\
\eta\big(\mathscr{L}^{(k)}(f,g)^T\big)_{k}
\end{pmatrix}
&=
\begin{pmatrix}
z_0^{0'} & z_0^{1'}  &  & \\
& z_0^{0'} & z_0^{1'}  & \\
& & \ddots & \ddots
\end{pmatrix}
\begin{pmatrix}
\eta(f)_0 \\
\vdots \\
\eta(f)_{k+1}
\end{pmatrix}\\
&\quad +
\begin{pmatrix}
z_1^{0'} & z_1^{1'}  &  & \\
& z_1^{0'} & z_1^{1'}  & \\
& & \ddots & \ddots
\end{pmatrix}
\begin{pmatrix}
\eta(g)_0 \\
\vdots \\
\eta(g)_{k+1}
\end{pmatrix}.
\end{aligned}
\end{equation*}
From the definition of $z_A^{A'}$ given in \eqref{eq:def:zAAprime}, it follows that
\begin{equation*}
\begin{aligned}
&z_0^{0'}\overline{z_0^{0'}}+z_0^{1'}\overline{z_0^{1'}}+z_1^{0'}\overline{z_1^{0'}}+z_1^{1'}\overline{z_1^{1'}}=2|q_0|^2,\\
&z_0^{1'}\overline{z_0^{0'}}+z_1^{1'}\overline{z_1^{0'}}=0. \\
\end{aligned}
\end{equation*}
  This implies that
\begin{equation*}
\begin{aligned}
2|q_0|^2\mathrm{Id}_{k+1}
&=
\begin{pmatrix}
z_0^{0'} & z_0^{1'}  &  & \\
& z_0^{0'} & z_0^{1'}  & \\
& & \ddots & \ddots
\end{pmatrix}
\begin{pmatrix}
\overline{z_0^{0'}} & \overline{z_0^{1'}}  &  & \\
& \overline{z_0^{0'}} & \overline{z_0^{1'}}  & \\
& & \ddots & \ddots
\end{pmatrix}^{\!T} \\
&\quad +
\begin{pmatrix}
z_1^{0'} & z_1^{1'}  &  & \\
& z_1^{0'} & z_1^{1'}  & \\
& & \ddots & \ddots
\end{pmatrix}
\begin{pmatrix}
\overline{z_1^{0'}} & \overline{z_1^{1'}}  &  & \\
& \overline{z_1^{0'}} & \overline{z_1^{1'}}  & \\
& & \ddots & \ddots
\end{pmatrix}^{\!T}.
\end{aligned}
\end{equation*}

We define two smooth functions $F,G\in C^\infty(U,\mathbb{C}^{k+2})$ via
\begin{align*}
&F=\frac{1}{2|q_0|^2}\begin{pmatrix}
\overline{z_0^{0'}} & \overline{z_0^{1'}}  &  & \\
& \overline{z_0^{0'}} & \overline{z_0^{1'}}  & \\
& & \ddots & \ddots
\end{pmatrix}^{\!T} \eta(h),\\
&G=\frac{1}{2|q_0|^2}\begin{pmatrix}
\overline{z_1^{0'}} & \overline{z_1^{1'}}  &  & \\
& \overline{z_1^{0'}} & \overline{z_1^{1'}}  & \\
& & \ddots & \ddots
\end{pmatrix}^{\!T}\eta(h).
\end{align*}
Let $f:=\eta^{-1}(F)$, $g:=\eta^{-1}(G)$, $H:=\eta(h)$ and define $H_{-1}=H_{k+1}=0$. The components of $F$ and $G$ can be explicitly expressed in terms of $H$ as
\begin{equation}\label{eq:thm main: x notin X:last:def f g}
\begin{aligned}
&F_s=\frac{1}{2|q_0|^2}(\overline{z_0^{1'}}H_{s-1}+\overline{z_0^{0'}}H_s)=\frac{1}{2|q_0|^2}(-z_1^{0'}H_{s-1}+z_1^{1'}H_s),\\
&G_s=\frac{1}{2|q_0|^2}(\overline{z_1^{1'}}H_{s-1}+\overline{z_1^{0'}}H_s)=\frac{1}{2|q_0|^2}(z_0^{0'}H_{s-1}-z_0^{1'}H_s).
\end{aligned}
\end{equation}
In addition, the functions $f$ and $g$ obey the identity
\begin{equation*}
\mathscr{L}^{(k)}(f,g)^T=h.
\end{equation*}
However, $f$ and $g$ are generally not $(k+1)$-regular, which necessitates suitable adjustments. Let $w\in C^\infty\big(U,\odot^{k+2}\mathbb{C}^{2}\big)$. We consider the expressions
\[f+L_1^{(k+2)}w,\qquad g-L_0^{(k+2)}w.\]
These two quantities still satisfy
\begin{equation*}
\mathscr{L}^{(k)}\bigl(f+L_1^{(k+2)}w,g-L_0^{(k+2)}w\bigr)^T=h.
\end{equation*}
Our remaining task is to construct a suitable $w$ such that $f+L_1^{(k+2)}w$ and $g-L_0^{(k+2)}w$ become $(k+1)$-regular.

The proof proceeds by considering two cases.

\textbf{Case $k\geqslant 1$.}
In this case, we have $f,g\in C^\infty\big(U,\odot^{k+1}\mathbb{C}^2\big)$, and
\[
\mathscr{D}_0^{(k+1)}f,\,\mathscr{D}_0^{(k+1)}g\in C^\infty\big(U,\odot^{k}\mathbb{C}^2\otimes \Lambda^1\mathbb{C}^{2n}\big).
\]
By Proposition \ref{prop:Lj and k CF}, we obtain
\begin{equation}\label{eq:thm main: x notin X:last:1}
L_0^{(k)}\mathscr{D}_0^{(k+1)}f+L_1^{(k)}\mathscr{D}_0^{(k+1)}g
=\mathscr{D}_0^{(k)}\big(L_0^{(k+1)}f+L_1^{(k+1)}g\big)=0.
\end{equation}

From the proof of the exactness of the sequence
\[
\mathcal{R}^{(k+2)}_x\xrightarrow{\widetilde{\mathscr{L}}^{(k)}}\mathcal{R}^{(k+1)}_x\oplus\mathcal{R}^{(k+1)}_x\xrightarrow{\mathscr{L}^{(k)}}\mathcal{R}_{x}^{(k)},
\]
there exists $v\in C^\infty\big(U,\odot^{k+1}\mathbb{C}^2\otimes\Lambda^1\mathbb{C}^{2n}\big)$ satisfying
\begin{equation}\label{eq:thm main: x notin X:last:v}
-L_1^{(k+1)}v=\mathscr{D}_0^{(k+1)}f,\qquad L_0^{(k+1)}v=\mathscr{D}_0^{(k+1)}g.
\end{equation}

Combining the definition of $D^{(k)}$ in \eqref{eq:def Dk} with Proposition \ref{prop:Lj and k CF}, we deduce
\begin{equation*}
L_1^{(k)}\mathscr{D}_1^{(k+2)}v
=L_1^{(k)}D^{(k+1)}v
=D^{(k)}L_1^{(k+1)}v
=\mathscr{D}_1^{(k+1)}\mathscr{D}_0^{(k+1)}f=0
\end{equation*}
and
\begin{equation*}
L_0^{(k)}\mathscr{D}_1^{(k+2)}v
=L_0^{(k)}D^{(k+1)}v
=D^{(k)}L_0^{(k+1)}v
=\mathscr{D}_1^{(k+1)}\mathscr{D}_0^{(k+1)}g=0.
\end{equation*}

Applying the proof of injectivity of $\widetilde{\mathscr{L}}^{(k)}$, we conclude
\[
\mathscr{D}_1^{(k+2)}v=0.
\]

Recall that $U$ is convex and $H^1(U,\mathcal{R}^{(k+2)}) = 0$. Then there exists a solution to the equation
\begin{equation*}
\mathscr{D}_0^{(k+2)}u = v.
\end{equation*}
Let $w$ be such a solution. Combining Proposition \ref{prop:Lj and k CF} with \eqref{eq:thm main: x notin X:last:v}, we obtain
\begin{equation}\label{eq:thm main: x notin X:last:case k large: final}
\begin{aligned}
\mathscr{D}_0^{(k+1)}\big(f+L_1^{(k+2)}w\big) &= \mathscr{D}_0^{(k+1)}f + L_1^{(k+1)}\mathscr{D}_0^{(k+2)}w = \mathscr{D}_0^{(k+1)}f + L_1^{(k+1)}v = 0,\\
\mathscr{D}_0^{(k+1)}\big(g-L_0^{(k+2)}w\big) &= \mathscr{D}_0^{(k+1)}g - L_0^{(k+1)}\mathscr{D}_0^{(k+2)}w = \mathscr{D}_0^{(k+1)}g - L_0^{(k+1)}v = 0.
\end{aligned}
\end{equation}
Consequently, we have
\begin{equation*}
\mathscr{L}^{(k)}\bigl(f+L_1^{(k+2)}w,g-L_0^{(k+2)}w\bigr)^T = h.
\end{equation*}
Moreover, the functions $f+L_1^{(k+2)}w$ and $g-L_0^{(k+2)}w$ are $(k+1)$-regular.

\textbf{Case $k=0$.}
For $k=0$, the proof differs from that for $k\geqslant 1$, since the operator $D^{(0)}$ is not well-defined, which means Proposition \ref{prop:Lj and k CF} is inapplicable. Consequently, equation \eqref{eq:thm main: x notin X:last:1} no longer holds.

However, we note that $\mathscr{D}_0^{(k+1)}f,\mathscr{D}_0^{(k+1)}g\in C^\infty(U,\Lambda^1\mathbb{C}^{2n})$. Accordingly, the function $v$ defined in \eqref{eq:thm main: x notin X:last:v} satisfies
\begin{equation*}
\begin{pmatrix}
-z_1^{0'} & -z_1^{1'}\\
z_0^{0'} & z_0^{1'}
\end{pmatrix}
\begin{pmatrix}
v_{0'}\\
v_{1'}
\end{pmatrix}=\begin{pmatrix}
\mathscr{D}_0^{(1)}f\\
\mathscr{D}_0^{(1)}g
\end{pmatrix}.
\end{equation*}
Since the matrix
\[
\begin{pmatrix}
-z_1^{0'} & -z_1^{1'}\\
z_0^{0'} & z_0^{1'}
\end{pmatrix}
\]
is invertible, we can explicitly define the function $v$ from \eqref{eq:thm main: x notin X:last:v} as follows:
\begin{equation}\label{eq:thm main: x notin X:last:case k0 v}
\begin{pmatrix}
v_{0'}\\
v_{1'}
\end{pmatrix}=\frac{1}{|q_0|^2}\begin{pmatrix}
z_0^{1'} & z_1^{1'}\\
-z_0^{0'} & -z_1^{0'}
\end{pmatrix}\begin{pmatrix}
\mathscr{D}_0^{(1)}f\\
\mathscr{D}_0^{(1)}g
\end{pmatrix}.
\end{equation}

The final task is to prove that $\mathscr{D}_1^{(2)}v=d^{0'}v_{0'}+d^{1'}v_{1'}=0$. Combining the definition of $v$ in \eqref{eq:thm main: x notin X:last:case k0 v}, the definitions of $f$ and $g$ in \eqref{eq:thm main: x notin X:last:def f g}, along with Lemma \ref{lem:computation z}, we obtain
\begin{align*}
\begin{pmatrix}
v_{0'}\\
v_{1'}
\end{pmatrix}
&=
\begin{pmatrix}
\frac{1}{|q_0|^2}d^{1'}h
-\frac{2z_0^{1'}}{|q_0|^4}\omega^0\wedge h
-\frac{2z_1^{1'}}{|q_0|^4}\omega^1\wedge h \\
-\frac{1}{|q_0|^2}d^{0'}h
+\frac{2z_0^{0'}}{|q_0|^4}\omega^0\wedge h
+\frac{2z_1^{0'}}{|q_0|^4}\omega^1\wedge h
\end{pmatrix}\\
&=
\begin{pmatrix}
\frac{1}{|q_0|^2}d^{1'}h
-\left(d^{1'}\frac{1}{|q_0|^2}\right)\wedge h \\
- \frac{1}{|q_0|^2}d^{0'}h
+\left(d^{0'}\frac{1}{|q_0|^2}\right)\wedge h
\end{pmatrix}.
\end{align*}
Notice that $d^{0'}d^{1'}\dfrac{1}{|q_0|^2}=d^{0'}d^{1'}h=0$. This immediately yields
\[d^{0'}v_{0'}+d^{1'}v_{1'}=0.\]
Furthermore, since the set $U$ is convex, we have $H^1(U,\mathcal{R}^{(2)})=0$. Thus there exists a function $w$ satisfying $\mathscr{D}_0^{(2)}w=v$. Combining with equation \eqref{eq:thm main: x notin X:last:case k large: final}, we obtain
\begin{equation*}
\mathscr{L}^{(k)}\bigl(f+L_1^{(k+2)}w,g-L_0^{(k+2)}w\bigr)^T = h
\end{equation*}
where the functions $f+L_1^{(k+2)}w$ and $g-L_0^{(k+2)}w$ are $(k+1)$-regular.

This completes the proof of case $x\notin X$.

\begin{remark}
   We would like to point out that the proof for the case $x\notin X$ employs multiple tools from homological algebra, including the connecting map. In Section \ref{section:long exact sequence}, we extend the above argument to establish the existence of a long exact sequence.
\end{remark}

\subsection{Proof of Theorem \ref{thm:main}: Case $x\in X$}

The proof for the case $x\in X$ relies heavily on the Taylor expansion of $k$-regular functions. We therefore introduce the Taylor expansion of $k$-regular functions presented in \cite{KW13}.

Let $s,j\in \mathbb{Z}$, $\mathbf{m}=(m_0,\ldots,m_{2n-1})\in \mathbb{Z}^{2n}_{\geqslant0}$ and define
\[|\mathbf{m}|=m_0+\cdots +m_{2n-1}.\]
In \cite{KW13}, Kang and Wang defined the polynomials $P^{\mathbf{m}}_{s,j}$ by
\begin{equation}\label{eq:def:P m sj}
P^{\mathbf{m}}_{s,j}:=\frac{1}{2\pi i}\oint_{|r|=1}r^{s-j}\prod_{A=0}^{2n-1} \big(z^{A0'}+z^{A1'}r\big)^{m_{A}}dr.
\end{equation}
Moreover, for any $k$-regular function $f$, there exists a series expansion of $f$ in terms of the polynomials $P^{\mathbf{m}}_{s,j}$.

\begin{theorem}\label{thm:series expansion}\cite[Theorem 1.2]{KW13}
Let $f$ be a $k$-regular function defined on a neighborhood of the origin $0$. Then there exists a neighborhood $B_0(\varepsilon)$ of $0$ and complex coefficients $C_{\mathbf{m}}^{j}$ such that the series
\[
\sum_{\mathbf{m}\in \mathbb{Z}_{\geqslant 0}^{2n}}\sum_{j=1}^{|\mathbf{m}|+s+1}C_{\mathbf{m}}^j P^{\mathbf{m}}_{s,j}
\]
converges uniformly to $\eta(f)_s$ on $B_0(\varepsilon)$ for each integer $s=0,\ldots,k$. Furthermore, each $P_{s,j}^{\mathbf{m}}$ takes the form
\begin{equation}\label{eq:rep:P m sj}
P_{s,j}^{\mathbf{m}}=\sum_{\substack{\mathbf{r},\mathbf{m}-\mathbf{r}\in \mathbb{Z}^{2n}_{\geqslant 0}\\ |\mathbf{r}|=j-1-s}}\left(\prod_{A}\binom{m_A}{r_A}\right)\left(\prod_{A}(z^{A0'})^{m_A-r_A}(z^{A1'})^{r_A}\right).
\end{equation}
\end{theorem}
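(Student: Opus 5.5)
The plan is to prove the expansion in two stages. First I show that each family of polynomials $(P^{\mathbf m}_{s,j})_{s}$ is itself $k$-regular. Then I show that these families span every space of homogeneous $k$-regular polynomials, and finish with a convergence argument.

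\textbf{Regularity of the polynomials.} Fix $\mathbf m$ and $j$. Put $\Phi_r:=\prod_{A}(z^{A0'}+z^{A1'}r)^{m_A}$, a polynomial in $z$ and $r$. By Lemma~\ref{lem:computation z}(1), $\nabla_{AA'}$ acting on $z^{BB'}$ only produces Kronecker deltas. After raising indices with $\epsilon$, this gives the pointwise identity
\[
\nabla_A^{0'}\Phi_r = -\,r\,\nabla_A^{1'}\Phi_r ,
\]
possibly with the opposite sign depending on conventions. The reason is that $\Phi_r$ depends on $z^{A0'},z^{A1'}$ only through the combination $z^{A0'}+z^{A1'}r$.

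Next I integrate this identity against $r^{s-j}$ over $|r|=1$. The factor $r$ shifts the exponent, so $r\cdot r^{s-j}=r^{(s+1)-j}$. This yields
\[
d^{0'}P^{\mathbf m}_{s,j}+d^{1'}P^{\mathbf m}_{s+1,j}=0 .
\]
By \eqref{eq:Djk under eta j small} this is exactly the $k$-regularity condition, read through $\eta$, for the vector $(P^{\mathbf m}_{s,j})_{s=0,\dots,k}$. The explicit formula \eqref{eq:rep:P m sj} follows by expanding $\Phi_r$ with the binomial theorem and taking the residue: only the terms in which $r$ appears to total power $j-1-s$ survive. This also explains the index range $1\le j\le |\mathbf m|+s+1$.

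\textbf{Completeness.} Every component of a $k$-regular $f$ is $0$-regular, i.e.\ it satisfies an elliptic (or at least hypoelliptic) second-order system. Hence $f$ is real analytic near $0$ and equals the sum of its homogeneous Taylor parts $f=\sum_{N\ge0} f_N$. Each $f_N$ is $k$-regular, because $\mathscr D^{(k)}_0$ is homogeneous of order one and therefore preserves the degree grading.

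It then suffices to show that the homogeneous $k$-regular polynomials of degree $N$ are spanned by the vectors $(P^{\mathbf m}_{s,j})_s$ with $|\mathbf m|=N$, where $j$ is reindexed so that the coefficients do not depend on $s$. I would establish this in three steps.
\begin{enumerate}
\item Compute the dimension of $\ker \mathscr D^{(k)}_0$ on degree-$N$ polynomials. Since the resolution \eqref{eq:resolution_of_k_CF} is exact on polynomials graded by degree, an Euler-characteristic count gives this dimension.
\item Check that the $P$'s span a subspace of that dimension. One route is to view them as the image of the Penrose-type transform of polynomials in $(z^{A0'}+z^{A1'}r)$ that are homogeneous in the twistor variables.
\item As an alternative route, use the $GL(2n)\times SL(2)$ symmetry. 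Both spaces are then the same irreducible representation $\odot^N\mathbb C^{2n}\otimes\odot^{N+k}\mathbb C^2$, and the $P$'s contain a highest weight vector.
\end{enumerate}

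\textbf{Convergence.} Having matched coefficients degree by degree, I obtain Cauchy-type estimates $|f_N|\le C R^{-N}$ on a small ball, using analyticity of solutions of elliptic systems. For $|z|$ small, $|P^{\mathbf m}_{s,j}|$ grows at most like $c^{N}|z|^N$. Together these give uniform convergence on some $B_0(\varepsilon)$.

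The main obstacle is the completeness step, namely the dimension count or the representation-theoretic identification of the spaces of homogeneous $k$-regular polynomials. I would attempt it first through the representation theory, since the $z^{AA'}$ coordinates make the $GL(2n)\times SL(2)$ action transparent.
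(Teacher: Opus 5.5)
Your outline is correct, but it does not follow the route behind this statement. The paper does not prove the theorem itself. It imports it from \cite{KW13}, where the $P^{\mathbf m}_{s,j}$ arise as Penrose-type contour integrals. The paper's own use of the result, in the proof of Lemma~\ref{lem:estimate of C mj and P m sj}, is to regroup the Taylor series via \eqref{eq:C mj and Taylor expansion}.

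Your regularity step is right. From $\nabla_{A1'}\Phi_r=r\nabla_{A0'}\Phi_r$, together with $\nabla_A^{0'}=\nabla_{A1'}$ and $\nabla_A^{1'}=-\nabla_{A0'}$, you get $d^{0'}P^{\mathbf m}_{s,j}+d^{1'}P^{\mathbf m}_{s+1,j}=0$ for every integer $s$. For $k=0$ you should add one line: $d^{0'}d^{1'}P^{\mathbf m}_{0,j}=d^{1'}d^{1'}P^{\mathbf m}_{1,j}=0$.

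Your dimension-count route to completeness also works, with two provisos:
\begin{enumerate}
\item Linear independence should come from the observation that, at fixed $s$, distinct $(\mathbf m,j)$ have disjoint monomial supports. This gives $\binom{N+2n-1}{N}(N+k+1)=\dim\bigl(\odot^N\mathbb C^{2n}\otimes\odot^{N+k}\mathbb C^2\bigr)$ independent elements of the kernel.
\item A highest-weight vector among the $P$'s is not enough on its own, because their linear span is not a priori $SL(2)$-invariant.
\end{enumerate}
The real cost of your plan is computing the kernel dimension. You would do this either by an Euler characteristic that accounts for the degree-two shift at $\mathscr D^{(k)}_k$, or by the $GL(2n)\times SL(2)$ decomposition. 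What it buys you is the structural fact that the $P$-vectors form a basis of the homogeneous kernel.

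The step you call the main obstacle can in fact be bypassed. For $k\geqslant1$, $k$-regularity read through $\eta$ says $\nabla_{A1'}\eta(f)_s=\nabla_{A0'}\eta(f)_{s+1}$. Each component also satisfies $\nabla_{A0'}\nabla_{B1'}=\nabla_{B0'}\nabla_{A1'}$. These two relations let you move $1'$-derivatives between the slots $A$ and trade them across $s$. Hence $\prod_A\nabla_{A0'}^{m_A-r_A}\nabla_{A1'}^{r_A}\eta(f)_s(0)$ depends only on $(\mathbf m,|\mathbf r|+s)$. Regrouping the $z$-Taylor series then gives the expansion at once, with
$C^j_{\mathbf m}=\prod_A\nabla_{A0'}^{m_A-r_A}\nabla_{A1'}^{r_A}\eta(f)_s(0)\big/\bigl(2^{|\mathbf m|}\prod_A m_A!\bigr)$ for any $|\mathbf r|=j-1-s$.

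This identification also closes a gap in your convergence paragraph. Bounds on $f_N$ and on $P^{\mathbf m}_{s,j}$ give convergence only of the degree-grouped series. The paper later splits the series into the parts $m_0=0$ and $m_1=0$, which requires absolute convergence of the double series. For that you need a bound on each $C^j_{\mathbf m}$. You get one because $C^j_{\mathbf m}\prod_A\binom{m_A}{r_A}$ is a single $z$-Taylor coefficient of $\eta(f)_s$, hence $O(R^{-|\mathbf m|})$. There are only polynomially many terms per degree, and $|P^{\mathbf m}_{s,j}|\leqslant 2^{|\mathbf m|}|x|^{|\mathbf m|}$. Together these give absolute convergence on a small ball.
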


Some remarks and lemmas concerning the above polynomials and their coefficients are given below.

\begin{remark}
For integers $t > |\mathbf{m}|$ or $t < 0$, we define the coefficient of $r^t$ in the polynomial
\[
f(r)=\prod_{A=0}^{2n-1} \big(z^{A0'}+z^{A1'}r\big)^{m_{A}}
\]
to be zero. By the definition of $P_{s,j}^{\mathbf{m}}$ \eqref{eq:rep:P m sj}, the polynomial $P_{s,j}^{\mathbf{m}}$ equals the coefficient of $r^{j-s-1}$ in $f(r)$.

Furthermore, we set
\[
C_{\mathbf{m}}^{j}:=0
\]
if $j-s \leqslant 0$ or $j-s \geqslant |\mathbf{m}|+2$. With these notations, the series stated in Theorem \ref{thm:series expansion} can be rewritten as
\[
\sum_{\mathbf{m}\in \mathbb{Z}_{\geqslant 0}^{2n}}\sum_{j\in \mathbb{Z}}C_{\mathbf{m}}^j P^{\mathbf{m}}_{s,j}.
\]
\end{remark}

\begin{lemma}\label{lem:L action on P sj}
For all $s,j\in\mathbb{Z}$, the identities
\begin{equation*}
z_{2t}^{0'}P^{\mathbf{m}}_{s,j}+z_{2t}^{1'}P^{\mathbf{m}}_{s+1,j}=P^{\mathbf{m}+\mathbf{e}_{2t+1}}_{s,j},\quad z_{2t+1}^{0'}P^{\mathbf{m}}_{s,j}+z_{2t+1}^{1'}P^{\mathbf{m}}_{s+1,j}=-P^{\mathbf{m}+\mathbf{e}_{2t}}_{s,j}
\end{equation*}
hold. Here $\mathbf{e}_{A}$ stands for the standard basis vector of $\mathbb{Z}^{2n}$ whose $A$-th coordinate is $1$ and all remaining coordinates are $0$.
\end{lemma}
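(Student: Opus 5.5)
The plan is to use the generating-function description of $P^{\mathbf{m}}_{s,j}$ from the Remark following Theorem~\ref{thm:series expansion}. There, $P^{\mathbf{m}}_{s,j}$ is the coefficient of $r^{j-s-1}$ in
\[
f_{\mathbf{m}}(r):=\prod_{A=0}^{2n-1}\big(z^{A0'}+z^{A1'}r\big)^{m_A},
\]
with the convention that coefficients of $r^t$ for $t<0$ or $t>|\mathbf{m}|$ vanish. Equivalently, $P^{\mathbf{m}}_{s,j}=\frac{1}{2\pi i}\oint_{|r|=1} r^{s-j}f_{\mathbf{m}}(r)\,dr$ for all $s,j\in\mathbb{Z}$. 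The integral form is convenient because it needs no case distinctions at the boundary indices.

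\textbf{Step 1: reduce to a product identity.} Let $a,b$ be any $r$-independent quantities. Since $r^{s-j}\cdot r=r^{(s+1)-j+1}$, the integral formula gives
\[
aP^{\mathbf{m}}_{s,j}+bP^{\mathbf{m}}_{s+1,j}=\frac{1}{2\pi i}\oint_{|r|=1}r^{s-j}(a+br)f_{\mathbf{m}}(r)\,dr .
\]
In coefficient language, this says that $aP^{\mathbf{m}}_{s,j}+bP^{\mathbf{m}}_{s+1,j}$ is the coefficient of $r^{j-s-1}$ in $(a+br)f_{\mathbf{m}}(r)$. This holds for all $s,j\in\mathbb{Z}$.

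\textbf{Step 2: relate the lower-index $z_A^{A'}$ to the upper-index $z^{AA'}$.} Compare the computation of $\tau(-q_t\mathbf{j})$ with the notation \eqref{eq:def:zAAprime}. The two matrices give
\[
z_{2t}^{A'}=z^{(2t+1)A'},\qquad z_{2t+1}^{A'}=-z^{(2t)A'},\qquad A'=0',1'.
\]
I would double-check these against \eqref{eq:z AAprime}. For instance, $\overline{z^{(2t+1)0'}}=x_{4t+2}-ix_{4t+3}$, so $z^{(2t+1)0'}=x_{4t+2}+ix_{4t+3}$, which agrees with the $(1,1)$ entry of $\tau(-q_t\mathbf{j})$. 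The other entries are checked the same way.

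\textbf{Step 3: conclude.} Take $(a,b)=(z_{2t}^{0'},z_{2t}^{1'})$. Then
\[
(a+br)f_{\mathbf{m}}(r)=\big(z^{(2t+1)0'}+z^{(2t+1)1'}r\big)f_{\mathbf{m}}(r)=f_{\mathbf{m}+\mathbf{e}_{2t+1}}(r).
\]
By Step 1, the left-hand side of the first identity is therefore $\frac{1}{2\pi i}\oint r^{s-j}f_{\mathbf{m}+\mathbf{e}_{2t+1}}(r)\,dr=P^{\mathbf{m}+\mathbf{e}_{2t+1}}_{s,j}$.

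For the second identity take $(a,b)=(z_{2t+1}^{0'},z_{2t+1}^{1'})$. Then $(a+br)f_{\mathbf{m}}(r)=-f_{\mathbf{m}+\mathbf{e}_{2t}}(r)$, which gives $-P^{\mathbf{m}+\mathbf{e}_{2t}}_{s,j}$.

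The only real point of care is Step 2, namely getting the sign and index bookkeeping between $z_A^{A'}$ and $z^{AA'}$ right; the rest is formal. Because $s,j$ range over all of $\mathbb{Z}$, the zero conventions must be respected. Working with the contour integral rather than with \eqref{eq:rep:P m sj} ensures this automatically.
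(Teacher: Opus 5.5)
Your proposal is correct and is essentially the paper's proof. The paper likewise rewrites $z_{2t}^{A'}=z^{(2t+1)A'}$ and $z_{2t+1}^{A'}=-z^{(2t)A'}$, then identifies each left-hand side as the coefficient of $r^{j-s-1}$ in $\prod_A\bigl(z^{A0'}+z^{A1'}r\bigr)^{m_A}$ multiplied by one extra factor $\pm\bigl(z^{B0'}+z^{B1'}r\bigr)$; your use of the defining contour integral is only a cosmetic way to handle the boundary conventions, and your Step 1 has a harmless typo ($r^{s-j}\cdot r=r^{(s+1)-j}$, not $r^{(s+1)-j+1}$), since the displayed identity you derive from it is right.
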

\begin{proof}
We first observe the identities
\begin{align*}
z_{2t}^{0'}P^{\mathbf{m}}_{s,j}+z_{2t}^{1'}P^{\mathbf{m}}_{s+1,j}
&=z^{(2t+1)0'}P^{\mathbf{m}}_{s,j}+z^{(2t+1)1'}P^{\mathbf{m}}_{s+1,j},\\
z_{2t+1}^{0'}P^{\mathbf{m}}_{s,j}+z_{2t+1}^{1'}P^{\mathbf{m}}_{s+1,j}
&=-z^{(2t)0'}P^{\mathbf{m}}_{s,j}-z^{(2t)1'}P^{\mathbf{m}}_{s+1,j},
\end{align*}
where the left-hand sides of these two equalities are precisely the coefficients of $r^{j-s-1}$ in
\[
\big(z^{(2t+1)0'}+z^{(2t+1)1'}r\big)\prod_{A=0}^{2n-1} \big(z^{A0'}+z^{A1'}r\big)^{m_{A}}
\quad\text{and}\quad
-\big(z^{(2t)0'}+z^{(2t)1'}r\big)\prod_{A=0}^{2n-1} \big(z^{A0'}+z^{A1'}r\big)^{m_{A}},
\]
in that order. Combining the definition of $P_{\mathbf{m}}^{s,j}$, we obtain the identities
\[
z_{2t}^{0'}P^{\mathbf{m}}_{s,j}+z_{2t}^{1'}P^{\mathbf{m}}_{s+1,j}=P^{\mathbf{m}+\mathbf{e}_{2t+1}}_{s,j},\quad z_{2t+1}^{0'}P^{\mathbf{m}}_{s,j}+z_{2t+1}^{1'}P^{\mathbf{m}}_{s+1,j}=-P^{\mathbf{m}+\mathbf{e}_{2t}}_{s,j}.
\]
\end{proof}

The following lemma reveals the connection between the series expansion given in Theorem \ref{thm:series expansion} and the Taylor expansion, and provides an estimate for the coefficient $C_{\mathbf{m}}^{j}$ and the polynomial $P^{\mathbf{m}}_{s,j}$.

\begin{lemma}\label{lem:estimate of C mj and P m sj}
Let $\varepsilon>0$ be sufficiently small. Suppose $f$ is a $k$-regular function defined on $B_0(\varepsilon)$, and let $C_{\mathbf{m}}^j$ and $P^{\mathbf{m}}_{s,j}$ be the quantities introduced in Theorem \ref{thm:series expansion}. The power series expansion of $f$ stated in Theorem \ref{thm:series expansion} coincides with the Taylor series of $f$ up to a rearrangement of summation indices. Furthermore, we establish the following estimates for the coefficient $C_{\mathbf{m}}^{j}$ and the polynomial $P^{\mathbf{m}}_{s,j}$:
\[
|C_{\mathbf{m}}^j|\leqslant C\left(\frac{2^{4n+5}n^2e}{\varepsilon}\right)^{|\mathbf{m}|}
\int_{B_{0}(\varepsilon)}\frac{|\eta(f)_s|}{\sigma_{4n}\varepsilon^{4n}}\,dV.
\]
\[
|P^{\mathbf{m}}_{s,j}|\leqslant \sum_{\substack{\mathbf{r},\mathbf{m}-\mathbf{r}\in \mathbb{Z}^{2n}_{\geqslant 0}\\ |\mathbf{r}|=j-1-s}}
(2\varepsilon)^{|\mathbf{m}|}\prod_{A}\binom{m_A}{r_A}.
\]
\end{lemma}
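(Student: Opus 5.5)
We first relate the series of Theorem~\ref{thm:series expansion} to the Taylor series, and then obtain the two estimates: the one for $P^{\mathbf{m}}_{s,j}$ from the explicit formula \eqref{eq:rep:P m sj}, and the one for $C^j_{\mathbf{m}}$ from a Cauchy-type integral representation of Taylor coefficients of the harmonic-type functions $\eta(f)_s$.

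\textbf{Step 1 (identification with Taylor series).} By \eqref{eq:rep:P m sj}, each $P^{\mathbf{m}}_{s,j}$ is a homogeneous polynomial of degree $|\mathbf{m}|$ in the real coordinates, since every $z^{AA'}$ is real-linear in $x$. Theorem~\ref{thm:series expansion} gives uniform convergence on $B_0(\varepsilon)$. We therefore group the terms by total degree $N=|\mathbf{m}|$ to obtain a series $\sum_N Q_N$ of homogeneous polynomials converging uniformly to $\eta(f)_s$. Since $\eta(f)_s$ is $0$-regular, hence real analytic (it is harmonic in the real variables, as $d^{0'}d^{1'}$ contains the Laplacian), we may integrate against spherical means, or use uniqueness of expansions of analytic functions into homogeneous components. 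Either way, $Q_N$ equals the degree-$N$ Taylor polynomial of $\eta(f)_s$ at $0$. So the expansion is the Taylor series after rearrangement.

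\textbf{Step 2 (bound on $P$).} On $B_0(\varepsilon)$ each $|z^{AA'}|\le |q_t|\le\varepsilon$, so $\prod_A |z^{A0'}|^{m_A-r_A}|z^{A1'}|^{r_A}\le \varepsilon^{|\mathbf{m}|}\le(2\varepsilon)^{|\mathbf{m}|}$. Summing \eqref{eq:rep:P m sj} with the triangle inequality then gives the stated bound directly.

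\textbf{Step 3 (bound on $C$; main obstacle).} We recover $C^j_{\mathbf{m}}$ from the Taylor coefficients. The $P^{\mathbf{m}}_{s,j}$ are linear combinations of monomials in the $z^{AA'}$, and we choose a monomial that isolates $C^j_{\mathbf{m}}$. Concretely, the monomial $\prod_A (z^{A0'})^{m_A-r_A}(z^{A1'})^{r_A}$ with a fixed $\mathbf{r}$, $|\mathbf{r}|=j-1-s$, appears (for fixed $s$) only in $P^{\mathbf{m}}_{s,j}$, because $\mathbf{m}$ is determined by the monomial's exponents and $j$ by $|\mathbf{r}|$. It appears there with coefficient $\prod\binom{m_A}{r_A}\ge1$. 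Hence $C^j_{\mathbf{m}}$ is bounded by the modulus of that monomial's coefficient in the Taylor expansion of $\eta(f)_s$, expressed in the complex coordinates $z^{AA'}$ and $\overline{z^{AA'}}$. That coefficient is a derivative $\partial^\alpha\eta(f)_s(0)/\alpha!$ with $|\alpha|=|\mathbf{m}|$, taken in directions $\partial/\partial z^{AA'}$; each such derivative is a combination of at most $2$ real partials with coefficients of size $\le1$.

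To bound these derivatives we use interior estimates for harmonic functions in $\mathbb{R}^{4n}$ through the mean value property: $|\partial^\beta u(0)|\le \big(\tfrac{4n\,|\beta|}{\rho}\big)^{|\beta|}e^{|\beta|}\cdot\frac{1}{\sigma_{4n}\varepsilon^{4n}}\int_{B_0(\varepsilon)}|u|\,dV$, obtained by iterating the gradient estimate on nested balls of radius step $\varepsilon/(2|\beta|)$. Converting complex derivatives to at most $2^{|\mathbf{m}|}$ real ones, and dividing by $\alpha!\ge (|\mathbf{m}|/(2n\cdot 2))^{|\mathbf{m}|}e^{-|\mathbf{m}|}$ up to constants via Stirling (the multi-index has $4n$ entries), the factor $|\mathbf{m}|^{|\mathbf{m}|}$ cancels. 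What remains is a geometric factor of the form $(c\,n^2 e/\varepsilon)^{|\mathbf{m}|}$ with $c$ a power of $2$ bounded by $2^{4n+5}$, which gives the claimed estimate. The delicate point is this bookkeeping of the combinatorial constants, which we will not optimize.
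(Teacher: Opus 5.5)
Your proposal is correct and follows essentially the paper's route: identify the Kang--Wang expansion with the Taylor series of the harmonic components $\eta(f)_s$, use the fact that for fixed $s$ each monomial $\prod_A (z^{A0'})^{m_A-r_A}(z^{A1'})^{r_A}$ occurs only in $P^{\mathbf{m}}_{s,j}$ (with coefficient $\prod_A\binom{m_A}{r_A}\geqslant 1$) to read off $C^j_{\mathbf{m}}$ from a derivative at $0$, bound that derivative by Cauchy-type estimates for harmonic functions combined with $|\alpha|^{|\alpha|}\leqslant e^{|\alpha|}|\alpha|!\leqslant e^{|\alpha|}(4n)^{|\alpha|}\alpha!$, and bound $P^{\mathbf{m}}_{s,j}$ termwise from \eqref{eq:rep:P m sj}. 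The differences are minor: you derive the derivative estimate by iterating the gradient bound instead of citing Evans (take $\rho=\varepsilon/2$; the factor $2^{4n}$ from passing to the average over $B_0(\varepsilon)$ goes into $C$), you go from the series to the Taylor expansion via uniqueness of homogeneous components rather than rearranging the Taylor series, and you should expand in the $4n$ coordinates $z^{AA'}$ alone rather than in $z^{AA'}$ and $\overline{z^{AA'}}$ together, since the conjugates are already among them (e.g.\ $\overline{z^{(2t)0'}}=z^{(2t+1)1'}$).
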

\begin{proof}
Note that every component of a $k$-regular function is harmonic and admits a Taylor expansion at the origin:
\begin{equation}\label{eq:Taylor expansion}
\eta(f)_s=\sum_{m=0}^\infty\sum_{|\alpha|=m}\frac{\big(D_\alpha\eta(f)_s\big)(0) \, x^{\alpha}}{\alpha!}.
\end{equation}
It is well known that there exists a Cauchy-type estimate for derivatives of harmonic functions:
\begin{align*}
\frac{1}{\alpha!}\big|(D_\alpha\eta(f)_s)(0)\big|
&\leqslant \frac{(2^{4n+3}\,n\,|\alpha|)^{|\alpha|}}{\alpha!\,\varepsilon^{|\alpha|}}
\int_{B_{0}(\varepsilon)}\frac{|\eta(f)_s|}{\sigma_{4n}\varepsilon^{4n}}\,dV\\
&\leqslant C\left(\frac{2^{4n+5}n^2e}{\varepsilon}\right)^{|\alpha|}
\int_{B_{0}(\varepsilon)}\frac{|\eta(f)_s|}{\sigma_{4n}\varepsilon^{4n}}\,dV,
\end{align*}
where $\sigma_{4n}$ stands for the volume of the unit ball $B_0(1)$, and $C$ is a constant depending only on the dimension. This estimate is presented in Theorems 7 and 10 of Section 2.2, Chapter 2 in \cite{Evans98}. Then  the series
\[
\sum_{m=0}^\infty\sum_{|\alpha|=m}\frac{\left|\big(D_\alpha\eta(f)_s\big)(0) \, x^{\alpha}\right|}{\alpha!}
\]
converges uniformly on $B_0\!\left(C'\varepsilon\right)$ for some constant $C'$. Consequently, the series \eqref{eq:Taylor expansion} admits interchange of the summation order.

Indeed, we claim that the polynomial vector
\[
\eta^{-1}\begin{pmatrix}
\vdots \\
P^{\mathbf{m}}_{s,j}\\
\vdots
\end{pmatrix}
\]
constitutes a basis for the space of $k$-regular polynomials of degree $|\mathbf{m}|$. The reasoning is as follows: for any fixed index $s$, each monomial of the form
\[
\prod_{A,A'} z^{AA'}
\]
occurs in at most one polynomial $P^{\mathbf{m}}_{s,j}$ across all admissible values of $j$.

We may now reorganize the summation indices to recast the Taylor expansion of $\eta(f)_s$ given in \eqref{eq:Taylor expansion} into a sum of terms of the form $C_{\mathbf{m}}^j P^{\mathbf{m}}_{s,j}$, as shown below:
\[
\begin{aligned}
\eta(f)_s(z)
&=\sum_{\alpha\in \mathbb{Z}^{4n}_{\geqslant 0}}\prod_{A,A'}\frac{1}{2^{|\alpha|}}\cdot\frac{(\overline{\nabla_{AA'}})^{\alpha_{AA'}}\eta(f)_s(0)\cdot (z^{AA'})^{\alpha_{AA'}}}{\alpha_{AA'}!}\\
&=\sum_{\mathbf{m}\in\mathbb{Z}^{2n}_{\geqslant 0}}\sum_{\substack{\mathbf{r},\mathbf{m}-\mathbf{r}\in \mathbb{Z}^{2n}_{\geqslant 0}\\ |\mathbf{r}|=j-1-s}}
\frac{(\overline{\nabla_{A0'}})^{m_A-r_A}(\overline{\nabla_{A1'}})^{r_A}\eta(f)_s(0)}{2^{|\mathbf{m}|}\,r_A!\,(m_A-r_A)!}
\,(z^{A0'})^{m_A-r_A}(z^{A1'})^{r_A}\\
&=\sum_{\mathbf{m}\in \mathbb{Z}_{\geqslant 0}^{2n}}\sum_{j\in\mathbb{Z}}C_{\mathbf{m}}^j P^{\mathbf{m}}_{s,j},
\end{aligned}
\]
where the coefficients $C_{\mathbf{m}}^j$ are defined via
\begin{equation}\label{eq:C mj and Taylor expansion}
C_{\mathbf{m}}^j=\frac{1}{2^{|\mathbf{m}|}\prod_{A}m_A!}(\overline{\nabla_{A0'}})^{m_A-r_A}(\overline{\nabla_{A1'}})^{r_A}\eta(f)_s(0).
\end{equation}
The formula above is independent of the choice of $\mathbf{r}$ due to the $k$-regularity of $f$.

Then the estimate of $C_{\mathbf{m}}^j$ follows from Theorems 7 and 10 in Section 2.2 of \cite{Evans98}, whereas the estimate of $P^{\mathbf{m}}_{s,j}$ follows immediately from equation \eqref{eq:rep:P m sj}.
\end{proof}

We can complete the remaining proof of Theorem \ref{thm:main} by using the series expansion.

\emph{Proof of Theorem \ref{thm:main}: case $x\in X$.}

It should be noted that $k$-regular functions possess translation invariance. Hence, we may assume $x=0$ without loss of generality.

Let $B_0(\varepsilon)$ denote a sufficiently small neighbourhood of the origin. We proceed with the proof by discussing several steps separately.

\textbf{Step 1: The injectivity of $\widetilde{\mathscr{L}}^{(k)}$.}

Assume that $\mathscr{L}^{(k)} h= 0$ on $U=B_0(\varepsilon)$. From the argument for the case $x \notin X \cap \Omega$, we conclude that $h(x) = 0$ for all $x \in U \cap \{q_0 \neq 0\}$. Since $h(x)$ is harmonic and $U \cap \{q_0 \neq 0\}$ is dense in $U$, it follows that $h = 0$ on $U$.

\textbf{Step 2: The exactness of $\mathcal{R}^{(k+2)}_x\xrightarrow{\widetilde{\mathscr{L}}^{(k)}}\mathcal{R}^{(k+1)}_x\oplus\mathcal{R}^{(k+1)}_x\xrightarrow{\mathscr{L}^{(k)}}\mathcal{R}_{x}^{(k)}$.}

Let $f,g\in \mathcal{R}^{(k+1)}(B_0(\varepsilon))$ satisfy
\[
L_0^{(k+1)}f+L_1^{(k+1)}g=0.
\]
It suffices to find some $h\in \mathcal{R}^{(k+2)}(B_0(\varepsilon^2))$ such that
\[
-L_1^{(k+2)}h=f,\qquad L_0^{(k+2)}h=g.
\]
By the proof case $x\notin X\cap \Omega$, away from $X$, the function $h$ necessarily obeys:
\begin{equation*}
\begin{pmatrix}
\eta(h)_s\\
\eta(h)_{s+1}
\end{pmatrix}=
\frac{1}{|q_0|^2}
\begin{pmatrix}
z_0^{1'} & z_1^{1'}\\
-z_0^{0'} & -z_1^{0'}
\end{pmatrix}
\begin{pmatrix}
\eta(f)_s\\
\eta(g)_s
\end{pmatrix}.
\end{equation*}
It remains to verify that the functions
\begin{equation}\label{eq:proof of thm main:case x in X:exactness:def of function h}
\frac{1}{|q_0|^2}
\begin{pmatrix}
z_0^{1'} & z_1^{1'}\\
-z_0^{0'} & -z_1^{0'}
\end{pmatrix}
\begin{pmatrix}
\eta(f)_s\\
\eta(g)_s
\end{pmatrix}
\end{equation}
admit smooth extensions across $X$ and are $(k+2)$-regular at every point $x\in B_0(\varepsilon^2)\cap X$. Assume that both $f$ and $g$ admit series expansions on $U$ of the following form:
\begin{align*}
\eta(f)_s&=\sum_{\mathbf{m}\in \mathbb{Z}_{\geqslant 0}^{2n}}\sum_{j\in\mathbb{Z}}{C}_{f,\mathbf{m}}^j P^{\mathbf{m}}_{s,j},\\
\eta(g)_s&=\sum_{\mathbf{m}\in \mathbb{Z}_{\geqslant 0}^{2n}}\sum_{j\in\mathbb{Z}}{C}_{g,\mathbf{m}}^j P^{\mathbf{m}}_{s,j}.
\end{align*}
Moreover, the above series expansions converge uniformly on $B_0(\varepsilon)$.
Then the condition $L_0^{(k+1)}f + L_1^{(k+1)}g = 0$, combined with equation \eqref{eq:LA eta} and Lemma \ref{lem:L action on P sj}, yields that for every $s = 0,1,\dots,k$, the series
\[
\sum_{\mathbf{m}\in \mathbb{Z}_{\geqslant 0}^{2n}}\sum_{j\in\mathbb{Z}}\left(C_{f,\mathbf{m}}^j P^{\mathbf{m}+\mathbf{e}_1}_{s,j}-C_{g,\mathbf{m}}^j P^{\mathbf{m}+\mathbf{e}_0}_{s,j}\right)
\]
converges uniformly to $0$ on $U$. We next consider the series
\[
\sum_{|i|=0}^{\infty}\sum_{\mathbf{m}\in \mathbb{Z}_{\geqslant 0}^{2n},\,|\mathbf{m}|=i}\sum_{j\in\mathbb{Z}}\left(C_{f,\mathbf{m}}^j P^{\mathbf{m}+\mathbf{e}_1}_{s,j}-C_{g,\mathbf{m}}^j P^{\mathbf{m}+\mathbf{e}_0}_{s,j}\right).
\]
The polynomial
\[
\sum_{\mathbf{m}\in \mathbb{Z}_{\geqslant 0}^{2n},\,|\mathbf{m}|=i}\sum_{j\in\mathbb{Z}}\left(C_{f,\mathbf{m}}^j P^{\mathbf{m}+\mathbf{e}_1}_{s,j}-C_{g,\mathbf{m}}^j P^{\mathbf{m}+\mathbf{e}_0}_{s,j}\right)
\]
is the $i$-th homogeneous part of above series. It follows from Proposition 1.30 in \cite{ABR01} that
\[
\sum_{\mathbf{m}\in \mathbb{Z}_{\geqslant 0}^{2n},\,|\mathbf{m}|=i}\sum_{j\in\mathbb{Z}}\left(C_{f,\mathbf{m}}^j P^{\mathbf{m}+\mathbf{e}_1}_{s,j}-C_{g,\mathbf{m}}^j P^{\mathbf{m}+\mathbf{e}_0}_{s,j}\right)=0
\]
holds for all $i=0,1,\dots$. Recall that for fixed $s$, the set $\{P_{s,j}^{\mathbf{m}}\}$ is linearly independent. We have
\begin{equation}\label{eq:proof of thm main:case x in X:exactness:cof 1}
C_{f,\mathbf{m}'}^{j}=C_{g,\mathbf{m'}}^{j}=0
\end{equation}
for all $\mathbf{m}'$ such that $m_0'=0$ or $m_1'=0$. Furthermore,
\begin{equation}\label{eq:proof of thm main:case x in X:exactness:cof 2}
C_{f,\mathbf{m}-\mathbf{e}_1}^{j}-C_{g,\mathbf{m}-\mathbf{e}_0}^{j}=0
\qquad \text{whenever} \qquad \mathbf{m}-\mathbf{e}_0,\mathbf{m}-\mathbf{e}_1\in \mathbb{Z}^{2n}_{\geqslant 0}.
\end{equation}

It suffices to show that the function \eqref{eq:proof of thm main:case x in X:exactness:def of function h} belongs to $C^1$ on $B_0(\varepsilon^2)$. It follows from equations \eqref{eq:proof of thm main:case x in X:exactness:cof 1} and \eqref{eq:proof of thm main:case x in X:exactness:cof 2} that
\begin{align*}
    z_0^{1'} \eta(f)_s + z_1^{1'}\eta(g)_s&=\sum_{\mathbf{m}\in \mathbb{Z}_{\geqslant 0}^{2n}}\sum_{j\in\mathbb{Z}}{C}_{f,\mathbf{m}}^j z_0^{1'}P^{\mathbf{m}}_{s,j}+ \sum_{\mathbf{m}\in \mathbb{Z}_{\geqslant 0}^{2n}}\sum_{j\in\mathbb{Z}}{C}_{g,\mathbf{m}}^j z_1^{1'}P^{\mathbf{m}}_{s,j}\\
    &=\sum_{\mathbf{m}-\mathbf{e}_0-\mathbf{e}_1\in \mathbb{Z}_{\geqslant 0}^{2n}}\sum_{j\in\mathbb{Z}}{C}_{f,\mathbf{m}-\mathbf{e}_1}^j z_0^{1'}P^{\mathbf{m}-\mathbf{e}_1}_{s,j}\\
    &\quad + \sum_{\mathbf{m}-\mathbf{e}_0-\mathbf{e}_1\in \mathbb{Z}_{\geqslant 0}^{2n}}\sum_{j\in\mathbb{Z}}{C}_{g,\mathbf{m}-\mathbf{e}_0}^j z_1^{1'}P^{\mathbf{m}-\mathbf{e}_0}_{s,j},
    \end{align*}
    and
    \begin{align*}
z_0^{0'}\eta(f)_s +z_1^{0'}\eta(g)_s&=\sum_{\mathbf{m}\in \mathbb{Z}_{\geqslant 0}^{2n}}\sum_{j\in\mathbb{Z}}{C}_{f,\mathbf{m}}^j z_0^{0'}P^{\mathbf{m}}_{s,j}+\sum_{\mathbf{m}\in \mathbb{Z}_{\geqslant 0}^{2n}}\sum_{j\in\mathbb{Z}} {C}_{g,\mathbf{m}}^j z_1^{0'}P^{\mathbf{m}}_{s,j}\\
&=\sum_{\mathbf{m}-\mathbf{e}_0-\mathbf{e}_1\in \mathbb{Z}_{\geqslant 0}^{2n}}\sum_{j\in\mathbb{Z}}{C}_{f,\mathbf{m}-\mathbf{e}_1}^j z_0^{0'}P^{\mathbf{m}-\mathbf{e}_1}_{s,j}\\
&\quad + \sum_{\mathbf{m}-\mathbf{e}_0-\mathbf{e}_1\in \mathbb{Z}_{\geqslant 0}^{2n}}\sum_{j\in\mathbb{Z}}{C}_{g,\mathbf{m}-\mathbf{e}_0}^j z_1^{0'}P^{\mathbf{m}-\mathbf{e}_0}_{s,j}.
\end{align*}
From Lemma \ref{lem:L action on P sj} and equation \eqref{eq:proof of thm main:case x in X:exactness:cof 2}, we obtain equations
\begin{align*}
{C}_{f,\mathbf{m}-\mathbf{e}_1}^j z_0^{1'}P^{\mathbf{m}-\mathbf{e}_1}_{s,j}+ {C}_{g,\mathbf{m}-\mathbf{e}_0}^j z_1^{1'}P^{\mathbf{m}-\mathbf{e}_0}_{s,j}
&=-{C}_{f,\mathbf{m}-\mathbf{e}_1}^jz_0^{1'}\big(z_1^{0'}P^{\mathbf{m}-\mathbf{e}_0-\mathbf{e}_1}_{s,j}+z_1^{1'}P^{\mathbf{m}-\mathbf{e}_0-\mathbf{e}_1}_{s+1,j}\big)\\
&\quad +{C}_{f,\mathbf{m}-\mathbf{e}_1}^jz_1^{1'}\big(z_0^{0'}P^{\mathbf{m}-\mathbf{e}_0-\mathbf{e}_1}_{s,j}+z_0^{1'}P^{\mathbf{m}-\mathbf{e}_0-\mathbf{e}_1}_{s+1,j}\big)\\
&={C}_{f,\mathbf{m}-\mathbf{e}_1}^j|q_0|^2P^{\mathbf{m}-\mathbf{e}_0-\mathbf{e}_1}_{s,j},
\end{align*}
and
\begin{align*}
-{C}_{f,\mathbf{m}-\mathbf{e}_1}^j z_0^{0'}P^{\mathbf{m}-\mathbf{e}_1}_{s,j}- {C}_{g,\mathbf{m}-\mathbf{e}_0}^j z_1^{0'}P^{\mathbf{m}-\mathbf{e}_0}_{s,j}
&={C}_{f,\mathbf{m}-\mathbf{e}_1}^jz_0^{0'}\big(z_1^{0'}P^{\mathbf{m}-\mathbf{e}_0-\mathbf{e}_1}_{s,j}+z_1^{1'}P^{\mathbf{m}-\mathbf{e}_0-\mathbf{e}_1}_{s+1,j}\big)\\
&\quad -{C}_{f,\mathbf{m}-\mathbf{e}_1}^jz_1^{0'}\big(z_0^{0'}P^{\mathbf{m}-\mathbf{e}_0-\mathbf{e}_1}_{s,j}+z_0^{1'}P^{\mathbf{m}-\mathbf{e}_0-\mathbf{e}_1}_{s+1,j}\big)\\
&={C}_{f,\mathbf{m}-\mathbf{e}_1}^j|q_0|^2P^{\mathbf{m}-\mathbf{e}_0-\mathbf{e}_1}_{s+1,j}
\end{align*}
which hold for every multi-index $\mathbf{m}$ such that $\mathbf{m}-\mathbf{e}_0,\mathbf{m}-\mathbf{e}_1\in \mathbb{Z}^{2n}_{\geqslant 0}$. This implies that
\begin{equation}
\begin{aligned}
     z_0^{1'} \eta(f)_s + z_1^{1'}\eta(g)_s&=|q_0|^2\sum_{\mathbf{m}-\mathbf{e}_0-\mathbf{e}_1\in \mathbb{Z}_{\geqslant 0}^{2n}}\sum_{j\in\mathbb{Z}}C_{f,\mathbf{m}-\mathbf{e}_1}^jP^{\mathbf{m}-\mathbf{e}_0-\mathbf{e}_1}_{s,j},\\
     -z_0^{0'}\eta(f)_s -z_1^{0'}\eta(g)_s&=|q_0|^2\sum_{\mathbf{m}-\mathbf{e}_0-\mathbf{e}_1\in \mathbb{Z}_{\geqslant 0}^{2n}}\sum_{j\in\mathbb{Z}}C_{f,\mathbf{m}-\mathbf{e}_1}^jP^{\mathbf{m}-\mathbf{e}_0-\mathbf{e}_1}_{s+1,j}.
\end{aligned}
\end{equation}
If the above series converges on $B_0(\varepsilon^2)$, then $h$ is well-defined and belongs to $C^1$. Let
\[
M=\sup_{x\in B(\varepsilon),\,0\leqslant s\leqslant k+1}\big\{|\eta(f)_s(x)|,\,|\eta(g)_s(x)|\big\}.
\]
Then Lemma \ref{lem:estimate of C mj and P m sj} implies the following estimate on $B_0(\varepsilon^2)$:
\begin{align*}
\sum_{\mathbf{m}-\mathbf{e}_0-\mathbf{e}_1\in \mathbb{Z}_{\geqslant 0}^{2n}}\sum_{j\in\mathbb{Z}}\big|C_{f,\mathbf{m}-\mathbf{e}_1}^j\big|\,\big|P^{\mathbf{m}-\mathbf{e}_0-\mathbf{e}_1}_{s,j}\big|
&\leqslant CM\sum_{|\mathbf{m}|=1}^\infty \big(|\mathbf{m}|+s\big) \left(\frac{2^{4n+5}n^2e}{\varepsilon}\right)^{|\mathbf{m}|}\big(8n\varepsilon^2\big)^{|\mathbf{m}|},\\[4pt]
\sum_{\mathbf{m}-\mathbf{e}_0-\mathbf{e}_1\in \mathbb{Z}_{\geqslant 0}^{2n}}\sum_{j\in\mathbb{Z}}\big|C_{f,\mathbf{m}-\mathbf{e}_1}^j\big|\,\big|P^{\mathbf{m}-\mathbf{e}_0-\mathbf{e}_1}_{s+1,j}\big|
&\leqslant CM\sum_{|\mathbf{m}|=1}^\infty \big(|\mathbf{m}|+s\big) \left(\frac{2^{4n+5}n^2e}{\varepsilon}\right)^{|\mathbf{m}|}\big(8n\varepsilon^2\big)^{|\mathbf{m}|}.
\end{align*}
Since the right-hand sides of the above inequalities converge for sufficiently small $\varepsilon$, the two series are convergent.
This implies that $\eta(h)$ is well-defined and given by
\begin{equation*}
\begin{aligned}
\eta(h)_s&=\sum_{\mathbf{m}-\mathbf{e}_0-\mathbf{e}_1\in \mathbb{Z}_{\geqslant 0}^{2n}}\sum_{j\in\mathbb{Z}}C_{f,\mathbf{m}-\mathbf{e}_1}^jP^{\mathbf{m}-\mathbf{e}_0-\mathbf{e}_1}_{s,j},\\
\eta(h)_{s+1}&=\sum_{\mathbf{m}-\mathbf{e}_0-\mathbf{e}_1\in \mathbb{Z}_{\geqslant 0}^{2n}}\sum_{j\in\mathbb{Z}}C_{f,\mathbf{m}-\mathbf{e}_1}^jP^{\mathbf{m}-\mathbf{e}_0-\mathbf{e}_1}_{s+1,j}.
\end{aligned}
\end{equation*}

\textbf{Step 3: The surjectivity of $\mathscr{L}^{(k)}_x$.}

Finally, we prove that if $h\in \mathcal{R}^{(k)}(B_0(\varepsilon))$ and $h$ vanishes on $X$, then there exist $f,g\in \mathcal{R}^{(k+1)}(B_0(\varepsilon^2))$ satisfying $L_0^{(k)}f+L_1^{(k)}g=h$.

Assume that $\eta(h)$ admits the following series expansion:
\begin{equation}\label{eq:proof of thm main:case x in X:series of h}
\eta(h)_s=\sum_{\mathbf{m}\in \mathbb{Z}_{\geqslant 0}^{2n}}\sum_{j\in\mathbb{Z}}C_{\mathbf{m}}^j P^{\mathbf{m}}_{s,j}.
\end{equation}
Since the restriction $h|_X$ vanishes identically, appealing to the Taylor expansion of $h$ given in \eqref{eq:C mj and Taylor expansion}, we deduce that $C_{\mathbf{m}}^j = 0$ provided $m_0 = m_1 = 0$.

We now consider the following two series:
\begin{align*}
\sum_{\mathbf{m}\in \mathbb{Z}_{\geqslant 0}^{2n},\,m_0=0}\sum_{j\in\mathbb{Z}}C_{\mathbf{m}}^j P^{\mathbf{m}}_{s,j},\qquad \sum_{\mathbf{m}\in \mathbb{Z}_{\geqslant 0}^{2n},\,m_1=0}\sum_{j\in\mathbb{Z}}C_{\mathbf{m}}^j P^{\mathbf{m}}_{s,j}.
\end{align*}
By Lemma \ref{lem:estimate of C mj and P m sj}, the expansion of $\eta(h)_s$ presented in \eqref{eq:proof of thm main:case x in X:series of h} converges absolutely. Therefore, both series converge on $B_0(\varepsilon)$ and satisfy
\[
\sum_{\mathbf{m}\in \mathbb{Z}_{\geqslant 0}^{2n},\,m_0=0}\sum_{j\in\mathbb{Z}}C_{\mathbf{m}}^j P^{\mathbf{m}}_{s,j}
+
\sum_{\mathbf{m}\in \mathbb{Z}_{\geqslant 0}^{2n},\,m_1=0}\sum_{j\in\mathbb{Z}}C_{\mathbf{m}}^j P^{\mathbf{m}}_{s,j}
=\eta(h)_s.
\]
From Lemma \ref{lem:L action on P sj}, it follows that
\begin{equation*}
\begin{aligned}
\sum_{\mathbf{m}\in \mathbb{Z}_{\geqslant 0}^{2n},\,m_0=0}\sum_{j\in\mathbb{Z}}C_{\mathbf{m}}^j P^{\mathbf{m}}_{s,j}
&=z_{0}^{0'}\sum_{\mathbf{m}\in \mathbb{Z}_{\geqslant 0}^{2n},\,m_0=0}\sum_{j\in\mathbb{Z}}C_{\mathbf{m}}^j P^{\mathbf{m}-\mathbf{e}_1}_{s,j}\\
&\quad +z_{0}^{1'}\sum_{\mathbf{m}\in \mathbb{Z}_{\geqslant 0}^{2n},\,m_0=0}\sum_{j\in\mathbb{Z}}C_{\mathbf{m}}^j P^{\mathbf{m}-\mathbf{e}_1}_{s+1,j},
\end{aligned}
\end{equation*}
and
\begin{equation*}
\begin{aligned}
\sum_{\mathbf{m}\in \mathbb{Z}_{\geqslant 0}^{2n},\,m_1=0}\sum_{j\in\mathbb{Z}}C_{\mathbf{m}}^j P^{\mathbf{m}}_{s,j}
&=-z_{1}^{0'}\sum_{\mathbf{m}\in \mathbb{Z}_{\geqslant 0}^{2n},\,m_1=0}\sum_{j\in\mathbb{Z}}C_{\mathbf{m}}^j P^{\mathbf{m}-\mathbf{e}_0}_{s,j}\\
&\quad -z_{1}^{1'}\sum_{\mathbf{m}\in \mathbb{Z}_{\geqslant 0}^{2n},\,m_1=0}\sum_{j\in\mathbb{Z}}C_{\mathbf{m}}^j P^{\mathbf{m}-\mathbf{e}_0}_{s+1,j}.
\end{aligned}
\end{equation*}

The above series converge on $B_0(\varepsilon^2)$, owing to the following estimates:
\begin{equation*}
\sum_{\mathbf{m}\in \mathbb{Z}_{\geqslant 0}^{2n},\,m_0=0}\sum_{j\in\mathbb{Z}}\big|C_{\mathbf{m}}^j\big| \big|P^{\mathbf{m}-\mathbf{e}_1}_{s,j}\big|
\leqslant CM\sum_{|\mathbf{m}|=1}^{\infty}\big(|\mathbf{m}|+s+1\big)\left(\frac{2^{4n+5}n^2e}{\varepsilon}\right)^{|\mathbf{m}|}\big(8n\varepsilon^2\big)^{|\mathbf{m}|-1},
\end{equation*}
\begin{equation*}
\sum_{\mathbf{m}\in \mathbb{Z}_{\geqslant 0}^{2n},\,m_1=0}\sum_{j\in\mathbb{Z}}\big|C_{\mathbf{m}}^j\big| \big|P^{\mathbf{m}-\mathbf{e}_0}_{s,j}\big|
\leqslant CM\sum_{|\mathbf{m}|=1}^{\infty}\big(|\mathbf{m}|+s+1\big)\left(\frac{2^{4n+5}n^2e}{\varepsilon}\right)^{|\mathbf{m}|}\big(8n\varepsilon^2\big)^{|\mathbf{m}|-1}.
\end{equation*}

Here $M$ is given by
\[
M=\sup_{x\in B(\varepsilon),\,0\leqslant s\leqslant k}\big\{|\eta(h)_s(x)|\big\}.
\]

Then we define $\eta(f)$ and $\eta(g)$ by
\begin{equation*}
\begin{aligned}
\eta(f)_s&:= \sum_{\mathbf{m}\in \mathbb{Z}_{\geqslant 0}^{2n},\,m_0=0}\sum_{j\in\mathbb{Z}}C_{\mathbf{m}}^j P^{\mathbf{m}-\mathbf{e}_1}_{s,j},\\
\eta(g)_s&:= \sum_{\mathbf{m}\in \mathbb{Z}_{\geqslant 0}^{2n},\,m_1=0}\sum_{j\in\mathbb{Z}}C_{\mathbf{m}}^j P^{\mathbf{m}-\mathbf{e}_0}_{s,j}.
\end{aligned}
\end{equation*}
Consequently, $f$ and $g$ satisfy
\begin{equation*}
L_0^{(k)}f+L_1^{(k)}g=h.
\end{equation*}
This completes the proof. \qed

\section{A long exact sequence of differential forms}\label{section:long exact sequence}

From the perspective of homological algebra, the Koszul complex \eqref{eq:Koszul complex} gives rise to a long exact sequence of sheaf cohomology groups. Nevertheless, this sequence is purely abstract and thus inconvenient for practical applications. In this section, we establish a long exact sequence of cohomology groups expressed in terms of differential forms, and complete the proof of Theorem \ref{thm:long exact sequence}.

We define operators between differential forms as follows.
\begin{definition}\label{def: L and tilde L}
For integers $k$ and $j$, any element in $C^\infty(\Omega,V_j^{(k)})$ is denoted by $(f_{\AAAAA})$.
Define operators $\mathcal{L}^{(k)}_j:C^\infty(\Omega,V_j^{(k+1)}\oplus V_j^{(k+1)})\longrightarrow C^\infty(\Omega,V_{j}^{(k)})$ by
    \begin{equation*}
       \left(\mathcal{L}^{(k)}_j(f,g)^T\right)_{\AAAAA }=\begin{cases}
          z_0^{0^\prime}f_{0^\prime \AAAAA }+z_0^{1^\prime}f_{1^\prime \AAAAA }
+ z_1^{0^\prime}g_{0^\prime \AAAAA }+z_1^{1^\prime}g_{1^\prime \AAAAA },
& 0\leqslant j\leqslant k,
\\[4pt]
z_0^{1^\prime}d^{0^\prime}f-z_0^{0^\prime}d^{1^\prime}f-z_1^{0\prime}d^{1^\prime}g+z_1^{1^\prime}d^{0^\prime}g+2\omega^1\wedge f-2\omega^0\wedge g,
& j = k+1,
\\[4pt]
-z_0^{(A_0^\prime}f^{A_1^\prime \cdots A_{j-k-2}^\prime)}-z_1^{(A_0^\prime}g^{A_1^\prime \cdots A_{j-k-2}^\prime)},
& j\geqslant k+2.
        \end{cases}
    \end{equation*}
Define operators $\widetilde{\mathcal{L}}^{(k)}_j:C^\infty(\Omega,V_j^{(k+2)})\longrightarrow C^\infty(\Omega,V_j^{(k+1)}\oplus V_j^{(k+1)})$ by
\begin{equation*}
      \left(\widetilde{\mathcal{L}}^{(k)}_j(h)\right)_{\AAAAA }=\begin{cases}
          \left(-z_1^{0^\prime}h_{0^\prime \AAAAA }-z_1^{1^\prime}h_{1^\prime \AAAAA },
 z_0^{0^\prime}h_{0^\prime \AAAAA }+z_0^{1^\prime}h_{1^\prime \AAAAA }\right)^T,
& 0\leqslant j\leqslant k+1 ,
\\[4pt]
\begin{pmatrix}2\omega^0\wedge h+z_1^{0^\prime}d^{1^\prime}h-z_1^{1^\prime}d^{0^\prime}h \\
2\omega^1\wedge h-z_0^{0^\prime}d^{1^\prime}h+z_0^{1^\prime}d^{0^\prime}h\end{pmatrix},
& j = k+2,
\\[4pt]
\left(z_1^{(A_0^\prime}h^{A_1^\prime \cdots A_{j-k-3}^\prime)},-z_0^{(A_0^\prime}h^{A_1^\prime \cdots A_{j-k-3}^\prime)}\right)^T,
& j\geqslant k+3.
\end{cases}
\end{equation*}
\end{definition}
More explicitly,  the operators $\mathcal{L}_j^{(k)}$ can be expressed as
\begin{equation}\label{eq:LA and mathcal L}
\mathcal{L}_j^{(k)}\bigl(f,g\bigr)^T=L_0^{(k-j+1)}f+L_1^{(k-j+1)}g.
\end{equation}
when $j\leqslant k$. Meanwhile, the operators $\widetilde{\mathcal{L}}_j^{(k)}$ take the form
\begin{equation}\label{eq:LA and widetilde mathcal L}
\widetilde{\mathcal{L}}_j^{(k)}h=\bigl(L_1^{(k-j+2)}h,\,L_0^{(k-j+2)}h\bigr)^T.
\end{equation}
for $j\leqslant k+1$.

Using the isomorphism $\eta$ \eqref{eq:def:eta} and Lemma \ref{lem:sym}, we can simplify the expressions of the operators $\mathcal{L}_j^{(k)}$ as follows:
\begin{equation}\label{eq:def mathcal L eta}
\eta\bigl(\mathcal{L}^{(k)}_j\bigl(f,g\bigr)^T\bigr)_{s}=\begin{cases}
          z_0^{0^\prime}\eta(f)_{s}+z_0^{1^\prime}\eta(f)_{s+1}
+ z_1^{0^\prime}\eta(g)_{s}+z_1^{1^\prime}\eta(g)_{s+1},
& 0\leqslant j\leqslant k,
\\[4pt]
 -\frac{p-s}{p}(z_0^{0'}\eta(f)_{s}+z_1^{0'}\eta(g)_{s})-\frac{s}{p}(z_0^{1'}\eta(f)_{s-1}+z_1^{1'}\eta(g)_{s-1}),
& j\geqslant k+2.
\end{cases}
\end{equation}
Here we set
\[
p = p(k,j) + 1,\quad s = 0,\dots,p.
\]
For the definition of $p(k,j)$, see equation \eqref{eq:def:p kj}.
Similarly, we simplify the expressions of the operators $\widetilde{\mathcal{L}}_j^{(k)}$ as follows:
\begin{equation}\label{eq:def widetilde mathcal L eta}
\eta\left(\widetilde{\mathcal{L}}^{(k)}_j(h)\right)_{s}=\begin{cases}
 \left(-z_1^{0^\prime}\eta(h)_{s}-z_1^{1^\prime}\eta(h)_{s+1},
 z_0^{0^\prime}\eta(h)_{s}+z_0^{1^\prime}\eta(h)_{s+1}\right)^T,
& 0\leqslant j\leqslant k+1 ,
\\[4pt]
\begin{pmatrix}
  \frac{p-s}{p}z_1^{0'}\eta(h)_s+\frac{s}{p}z_1^{1'}\eta(h)_{s-1} \\
  -\frac{p-s}{p}z_0^{0'}\eta(h)_s-\frac{s}{p}z_0^{1'}\eta(h)_{s-1}
\end{pmatrix},
& j\geqslant k+3.
\end{cases}
\end{equation}
Here we set
\[
p=p(k+1,j)+1,\qquad s=0,\ldots,p.
\]

We stress that the images of the operators $\mathcal{L}_{k+1}^{(k)}$ and $\widetilde{\mathcal{L}}_{k+2}^{(k)}$ are devoid of symmetric tensor components on $\mathbb{C}^2$. Consequently, the isomorphism $\eta$ does not yield any simplification to the representations of $\mathcal{L}_{k+1}^{(k)}$ and $\widetilde{\mathcal{L}}_{k+2}^{(k)}$.

We establish several properties associated with the operators $\mathcal{L}_j^{(k)}$, $\widetilde{\mathcal{L}}_j^{(k)}$, and $\mathscr{D}_j^{(k)}$.

\begin{proposition}\label{prop:DL and D tilde L commutative}
 The identities
    \[
    \mathcal{L}_{j+1}^{(k)}\mathscr{D}_j^{(k+1)}=\mathscr{D}_j^{(k)}\mathcal{L}_{j}^{(k)}, \qquad     \widetilde{\mathcal{L}}_{j+1}^{(k)}\mathscr{D}_j^{(k+2)}=\mathscr{D}_j^{(k+1)}\widetilde{\mathcal{L}}_{j}^{(k)}
    \]
    hold for all $j,k$.
\end{proposition}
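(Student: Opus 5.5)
The plan is a case analysis on $j$ relative to $k$, following the piecewise definitions of $\mathscr{D}_j^{(k)}$ in \eqref{eq:def_of_Dj}, of $\mathcal{L}_j^{(k)}$ and of $\widetilde{\mathcal{L}}_j^{(k)}$ (Definition~\ref{def: L and tilde L}). In each case both sides are written in $\eta$-coordinates and compared componentwise. The two identities are handled the same way, since $\widetilde{\mathcal{L}}_j^{(k)}$ is built from $L_1,L_0$ exactly as $\mathcal{L}_j^{(k)}$ is built from $L_0,L_1$. So I describe the argument for $\mathcal{L}$ and indicate the changes for $\widetilde{\mathcal{L}}$.

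\textbf{Low degrees} $0\leqslant j\leqslant k-1$. Here \eqref{eq:LA and mathcal L} gives $\mathcal{L}_j^{(k)}(f,g)^T=L_0f+L_1g$, and $\mathscr{D}_j^{(k)}$ coincides with $D^{(k-j)}$ by \eqref{eq:Djk under eta j small}. The identity is then exactly Proposition~\ref{prop:Lj and k CF}(1), applied to $f$ and $g$ separately and summed; note that $L_A$ and $D$ act on form-valued functions, so this is legitimate. The $\widetilde{\mathcal{L}}$ identity for $j\leqslant k$ is handled identically.

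\textbf{Transition degrees} $j=k$ and $j=k+1$ for $\mathcal{L}$, and $j=k+1$, $k+2$ for $\widetilde{\mathcal{L}}$. These are where $\mathscr{D}$ changes type: it becomes $d^{0'}d^{1'}$, then $d^{A_0'}$. For $j=k$ we must show $\mathcal{L}_{k+1}^{(k)}\mathscr{D}_k^{(k+1)}(f,g)=d^{0'}d^{1'}(z_0^{0'}f_{0'}+z_0^{1'}f_{1'}+z_1^{0'}g_{0'}+z_1^{1'}g_{1'})$, where $\mathscr{D}_k^{(k+1)}f=d^{0'}f_{0'}+d^{1'}f_{1'}$. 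I would expand the right side with Lemma~\ref{lem:computation z}(3) twice. The terms where no derivative hits $z$ are
\[
z_0^{0'}d^{0'}d^{1'}f_{0'}=-z_0^{0'}d^{1'}d^{0'}f_{0'},
\]
and similar ones. These should regroup, using \eqref{eq:prop of d0 and d1}, into $z_0^{1'}d^{0'}F-z_0^{0'}d^{1'}F$ with $F=d^{0'}f_{0'}+d^{1'}f_{1'}$, up to cross terms $z_0^{1'}d^{0'}d^{0'}f_{0'}=0$. The first-order correction terms $\omega^B\wedge\cdots$ produced by $\epsilon^{BA}$ should assemble into $2\omega^1\wedge F-2\omega^0\wedge G$; this is how the explicit $2\omega$ terms in the definition arise. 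The case $j=k+1$ compares $d^{A_0'}$ of the $j=k+1$ formula with $-z_0^{(A_0'}\mathscr{D}_{k+1}^{(k+1)}(\cdot))$. Here the derivatives landing on $z$ produce $\omega$ terms that cancel the $2\omega^1\wedge f$ terms, while $d^{A_0'}d^{0'}d^{1'}=0$ kills the rest. The $\widetilde{\mathcal{L}}$ cases $j=k+1,k+2$ are the same computations with $(f,g)\mapsto(-L_1h,L_0h)$.

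\textbf{High degrees} $j\geqslant k+2$. I would use the $\eta$-formulas \eqref{eq:Djk under eta j large} and \eqref{eq:def mathcal L eta}, which have the same weighted shape $\frac{p-s}{p}(\cdot)_s+\frac{s}{p}(\cdot)_{s-1}$. I would check that applying $d^{0'},d^{1'}$ after multiplication by $z_A^{A'}$ commutes up to the terms from Lemma~\ref{lem:computation z}(3), and that these extra terms cancel in pairs because of the antisymmetry of $\epsilon^{A'B'}$. The remaining weight bookkeeping, that is, checking the composed coefficients $\frac{p-s}{p}\cdot\frac{p'-s}{p'}$ etc.\ agree on both sides with $p'=p+1$, is symmetrisation combinatorics via Lemma~\ref{lem:sym}.

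I expect the main obstacle to be the transition degree $j=k$, $j=k+1$. There the operators are second order or mix types, and the exact signs and factors of $2$ in the $\omega^0,\omega^1$ terms have to match the definition precisely. I would first verify these cases by direct expansion in the simplest setting $k=0$ and then argue that the general case reduces to it componentwise.
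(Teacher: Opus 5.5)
Your plan is essentially the paper's proof. It uses the same split: the low degrees $j<k$ via Proposition~\ref{prop:Lj and k CF}(1), the transition degrees by direct expansion with Lemma~\ref{lem:computation z}, and $j>k+1$ via $\eta$-coordinates and Lemma~\ref{lem:sym}, with the $\widetilde{\mathcal{L}}$ identity treated in parallel.

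There are two slips in your sketch. First, at $j=k+1$ the zero-order terms do not vanish. They collapse to exactly $-z_0^{A_0'}d^{0'}d^{1'}f-z_1^{A_0'}d^{0'}d^{1'}g$, and this uses $d^{0'}d^{0'}=d^{1'}d^{1'}=0$ and $d^{1'}d^{0'}=-d^{0'}d^{1'}$, not $d^{A_0'}d^{0'}d^{1'}=0$. Second, the clean way to transfer the computations to $\widetilde{\mathcal{L}}$ is the identity $\widetilde{\mathcal{L}}_j^{(k)}h=\bigl(\mathcal{L}_j^{(k+1)}(0,-h),\,\mathcal{L}_j^{(k+1)}(h,0)\bigr)^T$, which holds for every $j$. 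With it, the second identity is literally the first one with $k$ replaced by $k+1$. A substitution $(f,g)\mapsto(-L_1h,L_0h)$ is not the right way to say this.
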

\begin{proof}
 The proof proceeds by cases. We begin by establishing the identity
\[
\mathcal{L}_{j+1}^{(k)}\mathscr{D}_j^{(k+1)}=\mathscr{D}_j^{(k)}\mathcal{L}_{j}^{(k)}.
\]

\textbf{Case $j<k$.}

This conclusion can be directly derived from equation \eqref{eq:LA and mathcal L}, the definition of $D^{(k)}$ in \eqref{eq:def Dk}, the definition of $\mathscr{D}_j^{(k)}$ in \eqref{eq:def_of_Dj}, and Proposition~\ref{prop:Lj and k CF}.

\textbf{Case $j=k$.}

We use the fact that $\mathscr{D}_{k}^{(k)}=d^{0'}d^{1'}$. Then Lemma~\ref{lem:computation z} implies that
\begin{align*}
d^{0'}d^{1'}\bigl(\mathcal{L}_{k}^{(k)}\bigl(f,g\bigr)^T\bigr)
&= d^{0'}d^{1'}\bigl(z_0^{0'}f_{0'}+z_0^{1'}f_{1'}
+ z_1^{0'}g_{0'}+z_1^{1'}g_{1'}\bigr)\\
&=z_0^{0'}d^{0'}d^{1'}f_{0'}
+z_0^{1'}d^{0'}d^{1'}f_{1'}
+ z_1^{0'}d^{0'}d^{1'}g_{0'}
+z_1^{1'}d^{0'}d^{1'}g_{1'}\\
&\quad +2\omega^1 \wedge d^{0'}f_{0'}
+2\omega^1\wedge d^{1'}f_{1'}
-2\omega^0\wedge d^{0'}g_{0'}
-2\omega^0\wedge d^{1'}g_{1'}.
\end{align*}
On the other hand, by the definition of $\mathcal{L}_k^{(k+1)}$, we have
\begin{align*}
\mathcal{L}_{k+1}^{(k)}\bigl(\mathscr{D}_k^{(k+1)}f,\mathscr{D}_k^{(k+1)}g\bigr)^T
&=-z_0^{0'}d^{1'}\mathscr{D}_k^{(k+1)}f
+z_0^{1'}d^{0'}\mathscr{D}_k^{(k+1)}f\\
&\quad -z_1^{0'}d^{1'}\mathscr{D}_k^{(k+1)}g
+z_1^{1'}d^{0'}\mathscr{D}_k^{(k+1)}g\\
&\quad +2\omega^1\wedge \mathscr{D}_k^{(k+1)}f
-2\omega^0\wedge \mathscr{D}_k^{(k+1)}g.
\end{align*}
Recall the definition of $\mathscr{D}_k^{(k+1)}$ \eqref{eq:def_of_Dj} and the properties of $d^{0'}$ and $d^{1'}$ \eqref{eq:prop of d0 and d1}. We obtain
\begin{align*}
\mathcal{L}_{k+1}^{(k)}\bigl(\mathscr{D}_k^{(k+1)}f,\mathscr{D}_k^{(k+1)}g\bigr)^T
&=z_0^{0'}d^{0'}d^{1'}f_{0'}
+z_0^{1'}d^{0'}d^{1'}f_{1'}
+ z_1^{0'}d^{0'}d^{1'}g_{0'}
+z_1^{1'}d^{0'}d^{1'}g_{1'}\\
&\quad +2\omega^1 \wedge d^{0'}f_{0'}
+2\omega^1\wedge d^{1'}f_{1'}
-2\omega^0\wedge d^{0'}g_{0'}
-2\omega^0\wedge d^{1'}g_{1'}\\
&=d^{0'}d^{1'}\bigl(\mathcal{L}_{k}^{(k)}\bigl(f,g\bigr)^T\bigr).
\end{align*}

\textbf{Case $j=k+1$.}

Recall that $\mathscr{D}_{k+1}^{(k+1)}=d^{0'}d^{1'}$. We have
\begin{align*}
\left(\mathcal{L}_{k+2}^{(k)}\bigl(d^{0'}d^{1'}f,d^{0'}d^{1'}g\bigr)^T\right)^{A_0^\prime}
=-z_0^{A_0'}d^{0'}d^{1'}f-z_1^{A_0'}d^{0'}d^{1'}g.
\end{align*}
On the other hand, from the definition of $\mathscr{D}_{k+1}^{(k)}$ in \eqref{eq:def_of_Dj}
and the definition of $\mathcal{L}_{k+1}^{(k)}$ (see Definition \ref{def: L and tilde L}), we obtain
\begin{align*}
\left(\mathscr{D}_{k+1}^{(k)}\mathcal{L}_{k+1}^{(k)}\bigl(f,g\bigr)^T\right)^{A_0'}
&=d^{A_0'}\left(\mathcal{L}_{k+1}^{(k)}\bigl(f,g\bigr)^T\right)\\
&=d^{A_0'}\Bigl(
-z_0^{0^\prime}d^{1^\prime}f
+z_0^{1^\prime}d^{0^\prime}f
-z_1^{0\prime}d^{1^\prime}g
+z_1^{1^\prime}d^{0^\prime}g
+2\omega^1\wedge f
-2\omega^0\wedge g
\Bigr).
\end{align*}
Applying Lemma \ref{lem:computation z}, we get
\begin{equation}\label{eq:prop:DL and D tilde L commutative:part 1: j eq k+1: 1}
\begin{aligned}
d^{A_0'}\bigl(-z_0^{0'}d^{1'}f+z_0^{1'}d^{0'}f+2\omega^1\wedge f\bigr)
&=-\delta_{0'}^{A_0'}z_0^{0'}d^{0'}d^{1'}f
-\delta_{1'}^{A_0'}z_0^{1'}d^{0'}d^{1'}f\\
&\quad +2\delta^{A_0'}_{1^\prime}\omega^1\wedge d^{1'}f
+2\delta^{A_0'}_{0^\prime}\omega^1\wedge d^{0'}f
-2\omega^1\wedge d^{A_0'}f\\
&=-z_0^{A_0'}d^{0'}d^{1'}f
\end{aligned}
\end{equation}
and
\begin{equation}\label{eq:prop:DL and D tilde L commutative:part 1: j eq k+1: 2}
\begin{aligned}
d^{A_0'}\bigl(-z_1^{0'}d^{1'}g+z_1^{1'}d^{0'}g-2\omega^0\wedge g\bigr)
&=-\delta_{0'}^{A_0'}z_1^{0'}d^{0'}d^{1'}g
-\delta_{1'}^{A_0'}z_1^{1'}d^{0'}d^{1'}g\\
&\quad -2\delta^{A_0'}_{1^\prime}\omega^0\wedge d^{1'}g
-2\delta^{A_0'}_{0^\prime}\omega^0\wedge d^{0'}g
+2\omega^0\wedge d^{A_0'}g\\
&=-z_1^{A_0'}d^{0'}d^{1'}g.
\end{aligned}
\end{equation}
This implies that
\[\left(\mathcal{L}_{k+2}^{(k)}\bigl(d^{0'}d^{1'}f,d^{0'}d^{1'}g\bigr)^T\right)^{A_0^\prime}=\left(\mathscr{D}_{k+1}^{(k)}\mathcal{L}_{k+1}^{(k)}\bigl(f,g\bigr)^T\right)^{A_0'}.\]

\textbf{Case $j>k+1$.}

Recall that \(V_{j+1}^{(k)}\cong \odot^{j-k}\mathbb{C}^2\otimes \Lambda^{j+2}\mathbb{C}^{2n}\). Let \(p=j-k>1\). It follows from equation \eqref{eq:Djk under eta j large}  and Lemma \ref{lem:sym} that
\begin{equation}\label{eq:prop:DL and D tilde L commutative:part 1: j geq k: 1}
\begin{aligned}
\eta\left(\mathscr{D}_j^{(k)}\mathcal{L}_{j}^{(k)}\bigl(f,g\bigr)^T\right)_{s}
&=\frac{p+1-s}{p+1}d^{0'}\left(\eta\bigl(\mathcal{L}_{j}^{(k)}\bigl(f,g\bigr)^T\bigr)_{s}\right)\\
&\quad +\frac{s}{p+1}d^{1'}\left(\eta\bigl(\mathcal{L}_{j}^{(k)}\bigl(f,g\bigr)^T\bigr)_{s-1}\right).
\end{aligned}
\end{equation}
Let $F:=\eta(f)$ and $G:=\eta(g)$. From equation \eqref{eq:def widetilde mathcal L eta}, we obtain
\begin{align*}
-\eta\left(\mathcal{L}_{j}^{(k)}\bigl(f,g\bigr)^T\right)_{s}
&= \frac{p-s}{p}z_0^{0'}F_{s}+\frac{s}{p}z_0^{1'}F_{s-1}+\frac{p-s}{p}z_1^{0'}G_{s}+\frac{s}{p}z_1^{1'}G_{s-1}.
\end{align*}
Then equation \eqref{eq:prop:DL and D tilde L commutative:part 1: j geq k: 1} implies that
\begin{equation}\label{eq:prop:DL and D tilde L commutative:part 1: j geq k: 2}
    \begin{aligned}
-p(p+1)\eta\left(\mathscr{D}_j^{(k)}\mathcal{L}_{j}^{(k)}\bigl(f,g\bigr)^T\right)_{s}
&=(p+1-s)d^{0'}\left((p-s)z_0^{0'}F_{s}+sz_0^{1'}F_{s-1}\right.\\
&\quad +\left.(p-s)z_1^{0'}G_{s}+sz_1^{1'}G_{s-1}\right)\\
&\quad +sd^{1'}\left((p-s+1)z_0^{0'}F_{s-1}+(s-1)z_0^{1'}F_{s-2}\right.\\
&\quad +\left.(p-s+1)z_1^{0'}G_{s-1}+(s-1)z_1^{1'}G_{s-2}\right).
    \end{aligned}
\end{equation}
Using Lemma \ref{lem:computation z}, it follows that
\begin{equation}\label{eq:prop:DL and D tilde L commutative:part 1: j geq k: 3}
\begin{aligned}
d^{A'}\bigl((p-s)z_0^{0'}F_{s}+sz_0^{1'}F_{s-1}\bigr)
&=(p-s)z_0^{0'}d^{A'}F_{s}+sz_0^{1'}d^{A'}F_{s-1}\\
&\quad +2(p-s)\epsilon^{0'A'}\omega^1\wedge F_s-2s\epsilon^{1'A'}\omega^1\wedge F_{s-1},
\end{aligned}
\end{equation}
and
\begin{equation}\label{eq:prop:DL and D tilde L commutative:part 1: j geq k: 4}
\begin{aligned}
d^{A'}\bigl((p-s)z_1^{0'}G_{s}+sz_1^{1'}G_{s-1}\bigr)
&=(p-s)z_1^{0'}d^{A'}G_{s}+sz_1^{1'}d^{A'}G_{s-1}\\
&\quad -2(p-s)\epsilon^{0'A'}\omega^0\wedge G_s+2s\epsilon^{1'A'}\omega^0\wedge G_{s-1}.
\end{aligned}
\end{equation}
Then equations \eqref{eq:prop:DL and D tilde L commutative:part 1: j geq k: 2},
\eqref{eq:prop:DL and D tilde L commutative:part 1: j geq k: 3} and
\eqref{eq:prop:DL and D tilde L commutative:part 1: j geq k: 4} imply that
\begin{equation}\label{eq:prop:DL and D tilde L commutative:part 1: j geq k: 5}
\begin{aligned}
-p(p+1)\eta\left(\mathscr{D}_j^{(k)}\mathcal{L}_{j}^{(k)}\bigl(f,g\bigr)^T\right)_{s}
&=(p+1-s)(p-s)(z_0^{0'}d^{0'}F_s+z_1^{0'}d^{0'}G_s)\\
&\quad +(p+1-s)s(z_0^{1'}d^{0'}F_{s-1}+z_1^{1'}d^{0'}G_{s-1})\\
&\quad +(p+1-s)s(z_0^{0'}d^{1'}F_{s-1}+z_1^{0'}d^{1'}G_{s-1})\\
&\quad +s(s-1)(z_0^{1'}d^{1'}F_{s-2}+z_1^{1'}d^{1'}G_{s-2}).
\end{aligned}
\end{equation}

On the other hand, by virtue of Lemma \ref{lem:sym} together with equations \eqref{eq:Djk under eta j large}, \eqref{eq:prop:DL and D tilde L commutative:part 1: j geq k: 1} and \eqref{eq:prop:DL and D tilde L commutative:part 1: j geq k: 5}, we deduce that
\begin{equation}\label{eq:prop:DL and D tilde L commutative:part 1: j geq k: final}
\begin{aligned}
p(p+1)\eta\left(\mathcal{L}_{j+1}^{(k)}\mathscr{D}_j^{(k+1)}\bigl(f,g\bigr)^T\right)_s
&=p(p+1-s)z_0^{0'}\eta\big(\mathscr{D}_j^{(k+1)}f\big)_s\\
&\quad +psz_0^{1'}\eta\big(\mathscr{D}_j^{(k+1)}f\big)_{s-1}\\
&\quad p(p+1-s)z_1^{0'}\eta\big(\mathscr{D}_j^{(k+1)}g\big)_s\\
&\quad +psz_1^{1'}\eta\big(\mathscr{D}_j^{(k+1)}g\big)_{s-1}\\
&=p(p+1)\eta\left(\mathscr{D}_j^{(k)}\mathcal{L}_{j}^{(k)}\bigl(f,g\bigr)^T\right)_{s}.
\end{aligned}
\end{equation}
This verifies the case $j>k+1$ and thus finishes the proof of the identity
\[
\mathcal{L}_{j+1}^{(k)}\mathscr{D}_j^{(k+1)}=\mathscr{D}_j^{(k)}\mathcal{L}_{j}^{(k)}.
\]

The assertion
\[\widetilde{\mathcal{L}}_{j+1}^{(k)}\mathscr{D}_j^{(k+2)}=\mathscr{D}_j^{(k+1)}\widetilde{\mathcal{L}}_{j}^{(k)}\]
can be proved similarly. The entire proof is divided into several cases.

\textbf{Case $j<k+1$.}

This conclusion follows directly from equation \eqref{eq:LA and widetilde mathcal L}, the definition of $D^{(k)}$ given in \eqref{eq:def Dk}, the definition of $\mathscr{D}_j^{(k)}$ in \eqref{eq:def_of_Dj}, and Proposition~\ref{prop:Lj and k CF}.

\textbf{Case $j=k+1$.}

Note that $\mathscr{D}_{k+1}^{(k+1)}=d^{0'}d^{1'}$. Then we have
\[
d^{0'}d^{1'}\bigl(\widetilde{\mathcal{L}}^{(k)}_{k+1}h\bigr)
=
\begin{pmatrix}
d^{0'}d^{1'}\bigl(-z_1^{1'}h_{1'}-z_1^{0'}h_{0'}\bigr)\\[4pt]
d^{0'}d^{1'}\bigl(z_0^{1'}h_{1'}+z_0^{0'}h_{0'}\bigr)
\end{pmatrix}.
\]
By Lemma~\ref{lem:computation z}, it follows that
\[
d^{0'}d^{1'}\bigl(\widetilde{\mathcal{L}}^{(k)}_{k+1}h\bigr)
=
\begin{pmatrix}
-z_1^{1'}d^{0'}d^{1'}h_{1'}-z_1^{0'}d^{0'}d^{1'}h_{0'}+2\omega^0\wedge\bigl(d^{1'}h_{1'}+d^{0'}h_{0'}\bigr)\\[4pt]
z_0^{1'}d^{0'}d^{1'}h_{1'}+z_0^{0'}d^{0'}d^{1'}h_{0'}+2\omega^1\wedge\bigl(d^{1'}h_{1'}+d^{0'}h_{0'}\bigr)
\end{pmatrix}.
\]
On the other hand, from the definition of $\widetilde{\mathcal{L}}_{k+2}^{(k)}$ (see Definition~\ref{def: L and tilde L}), we obtain
\begin{align*}
&\widetilde{\mathcal{L}}_{k+2}^{(k)}\bigl(\mathscr{D}_{k+1}^{(k+2)}h\bigr)=
\widetilde{\mathcal{L}}_{k+2}^{(k)}\bigl(d^{0'}h_{0'}+d^{1'}h_{1'}\bigr)\\
&=
\begin{pmatrix}
2\omega^0\wedge\bigl(d^{0'}h_{0'}+d^{1'}h_{1'}\bigr)
+z_1^{0'}d^{1'}\bigl(d^{0'}h_{0'}+d^{1'}h_{1'}\bigr)
-z_1^{1'}d^{0'}\bigl(d^{0'}h_{0'}+d^{1'}h_{1'}\bigr)\\[4pt]
2\omega^1\wedge\bigl(d^{0'}h_{0'}+d^{1'}h_{1'}\bigr)
-z_0^{0'}d^{1'}\bigl(d^{0'}h_{0'}+d^{1'}h_{1'}\bigr)
+z_0^{1'}d^{0'}\bigl(d^{0'}h_{0'}+d^{1'}h_{1'}\bigr)
\end{pmatrix}\\
&=
\begin{pmatrix}
2\omega^0\wedge\bigl(d^{0'}h_{0'}+d^{1'}h_{1'}\bigr)
+z_1^{0'}d^{1'}d^{0'}h_{0'}
-z_1^{1'}d^{0'}d^{1'}h_{1'}\\[4pt]
2\omega^1\wedge\bigl(d^{0'}h_{0'}+d^{1'}h_{1'}\bigr)
-z_0^{0'}d^{1'}d^{0'}h_{0'}
+z_0^{1'}d^{0'}d^{1'}h_{1'}
\end{pmatrix}=d^{0'}d^{1'}\bigl(\widetilde{\mathcal{L}}^{(k)}_{k+1}h\bigr).
\end{align*}

\textbf{Case $j=k+2$.}

Note that $\mathscr{D}_{k+2}^{(k+2)}=d^{0'}d^{1'}$. Then we have
\begin{align*}
\bigl(\widetilde{\mathcal L}^{(k)}_{k+3}\mathscr{D}_{k+2}^{(k+2)}h\bigr)^{A_0'}
&= \bigl(z_1^{A_0'}d^{0'}d^{1'}h,\, -z_0^{A_0'}d^{0'}d^{1'}h\bigr)^T.
\end{align*}
On the other hand, by setting $f=-g=h$ in equations \eqref{eq:prop:DL and D tilde L commutative:part 1: j eq k+1: 1} and \eqref{eq:prop:DL and D tilde L commutative:part 1: j eq k+1: 2}, we obtain
\begin{align*}
\bigl(\mathscr{D}_{k+2}^{(k+1)}\widetilde{\mathcal L}^{(k)}_{k+2}h\bigr)^{A_0'}
&= \begin{pmatrix}
d^{A_0'}\bigl(-z_1^{1'}d^{0'}h + z_1^{0'}d^{1'}h + 2\omega^0\wedge h\bigr)\\[4pt]
d^{A_0'}\bigl(\phantom{-}z_0^{1'}d^{0'}h - z_0^{0'}d^{1'}h + 2\omega^1\wedge h\bigr)
\end{pmatrix}\\[4pt]
&= \bigl(z_1^{A_0'}d^{0'}d^{1'}h,\, -z_0^{A_0'}d^{0'}d^{1'}h\bigr)^T
= \bigl(\widetilde{\mathcal L}^{(k)}_{k+3}\mathscr{D}_{k+2}^{(k+2)}h\bigr)^{A_0'}.
\end{align*}

\textbf{Case $j>k+2$.}

Recall that \(V_{j+1}^{(k+1)}\cong \odot^{j-k-1}\mathbb{C}^2\otimes \Lambda^{j+2}\mathbb{C}^{2n}\).
Let \(p=j-k-1>1\).
It follows from equation \eqref{eq:Djk under eta j large}  and Lemma \ref{lem:sym} that
\begin{equation}\label{eq:prop:DL and D tilde L commutative:part 2: j geq k: 1}
  \eta\bigl(\mathscr{D}_j^{(k+1)}\widetilde{\mathcal{L}}_j^{(k)}h\bigr)_s =\frac{p+1-s}{p+1}d^{0'}\, \eta(\widetilde{\mathcal{L}}_j^{(k)}h)_s+\frac{s}{p+1}d^{1'}\, \eta(\widetilde{\mathcal{L}}_j^{(k)}h)_{s-1}.
\end{equation}
Let $H:=\eta(h)$. From equation \eqref{eq:def widetilde mathcal L eta}, we obtain
\begin{equation*}
     \eta\bigl(\widetilde{\mathcal{L}}_j^{(k)}h\bigr)_s=\begin{pmatrix}
  \frac{p-s}{p}z_1^{0'}H_s+\frac{s}{p}z_1^{1'}H_{s-1} \\
  -\frac{p-s}{p}z_0^{0'}H_s-\frac{s}{p}z_0^{1'}H_{s-1}
\end{pmatrix}.
\end{equation*}
By equation \eqref{eq:prop:DL and D tilde L commutative:part 2: j geq k: 1}, we have
\begin{equation*}
    \begin{aligned}
        &p(p+1)\eta\bigl(\mathscr{D}_j^{(k+1)}\widetilde{\mathcal{L}}_j^{(k)}h\bigr)_s=\bigl(\sigma_1,-\sigma_2\bigr)^T,
        \end{aligned}
        \end{equation*}
where $\sigma_1$ and $\sigma_2$ are given as following
\begin{equation*}
\begin{aligned}
        \sigma_1&=
           (p+1-s)d^{0'}((p-s)z_1^{0'}H_s+sz_1^{1'}H_{s-1})+sd^{1'}((p-s+1)z_1^{0'}H_{s-1}+(s-1)z_1^{1'}H_{s-2}),\\
          \sigma_2&=(p+1-s)d^{0'}((p-s)z_0^{0'}H_s+sz_0^{1'}H_{s-1})+sd^{1'}((p-s+1)z_0^{0'}H_{s-1}+(s-1)z_0^{1'}H_{s-2}).
    \end{aligned}
\end{equation*}
Set $F = H$ in \eqref{eq:prop:DL and D tilde L commutative:part 1: j geq k: 3} and $G = -H$ in \eqref{eq:prop:DL and D tilde L commutative:part 1: j geq k: 4}. Combining this with Lemma \ref{lem:computation z}, we obtain
\begin{equation*}
    \begin{aligned}
    \sigma_1&=(p+1-s)\big((p-s)z_1^{0'}d^{0'}H_s+sz_1^{1'}d^{0'}H_{s-1}\big)\\
    &\quad +s\big((p-s+1)z_1^{0'}d^{1'}H_{s-1}+(s-1)z_1^{1'}d^{1'}H_{s-2}\big),\\
    \sigma_2&=(p+1-s)\big((p-s)z_0^{0'}d^{0'}H_s+sz_0^{1'}d^{0'}H_{s-1}\big)\\
    &\quad +s\big((p-s+1)z_0^{0'}d^{1'}H_{s-1}+(s-1)z_0^{1'}d^{1'}H_{s-2}\big).
    \end{aligned}
\end{equation*}

The desired result then follows by expanding $p(p+1)\eta\left(\widetilde{\mathcal{L}}_{j+1}^{(k)}\mathscr{D}_{j}^{(k+2)}h\right)_s$, whose form is analogous to \eqref{eq:prop:DL and D tilde L commutative:part 1: j geq k: final}.

\end{proof}

\begin{proposition}\label{prop: injective of tilde L}
 Let $\Omega\subset\mathbb{H}^n$ be a domain. The operator $\widetilde{\mathcal{L}}_j^{(k)}$ is injective for all $j\neq k+2$.
\end{proposition}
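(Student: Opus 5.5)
The argument is pointwise linear algebra on the dense open set $\Omega\setminus X$, followed by a continuity argument. Since $\widetilde{\mathcal{L}}_j^{(k)}$ acts without derivatives for $j\neq k+2$, it is a multiplication operator by a matrix of the coordinates $z_A^{A'}$. So it suffices to show the following: at every point with $q_0\neq 0$, this matrix has trivial kernel. Indeed, if $\widetilde{\mathcal{L}}_j^{(k)}h=0$, then $h$ vanishes on $\Omega\cap\{q_0\neq0\}$. This set is dense in $\Omega$, because $X$ is a real codimension-four subspace intersected with $\Omega$. Since $h$ is smooth, $h\equiv0$ on $\Omega$. No regularity of $h$ is needed, which is why the statement holds for all smooth forms. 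The case $j=k+2$ is exactly the one where $\widetilde{\mathcal{L}}_{k+2}^{(k)}$ involves $d^{0'},d^{1'}$, and this approach does not apply there.

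\textbf{Case $0\leqslant j\leqslant k+1$.} By \eqref{eq:def widetilde mathcal L eta}, the condition $\widetilde{\mathcal{L}}_j^{(k)}h=0$ reads
\[
\begin{pmatrix}-z_1^{0'}&-z_1^{1'}\\ z_0^{0'}&z_0^{1'}\end{pmatrix}\begin{pmatrix}\eta(h)_s\\ \eta(h)_{s+1}\end{pmatrix}=0
\]
for every admissible $s$. This is exactly the system treated in Step~1 of the proof of Theorem~\ref{thm:main} for $x\notin X$. The determinant of the matrix is $|q_0|^2$, so $\eta(h)_s=0$ for all $s$ wherever $q_0\neq0$. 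The density argument above then finishes this case.

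\textbf{Case $j\geqslant k+3$.} Put $H=\eta(h)$ and $p=p(k+1,j)+1=j-k-2$. Then $h$ has symmetric degree $j-k-3=p-1$, so $H=(H_0,\dots,H_{p-1})$. By \eqref{eq:def widetilde mathcal L eta}, the vanishing of $\widetilde{\mathcal{L}}_j^{(k)}h$ gives, for each $s$,
\[
(p-s)z_1^{0'}H_s+s\,z_1^{1'}H_{s-1}=0,\qquad (p-s)z_0^{0'}H_s+s\,z_0^{1'}H_{s-1}=0 .
\]
I will prove $H_s=0$ by induction on $s=0,1,\dots,p-1$.

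For $s=0$ the equations reduce to $z_1^{0'}H_0=z_0^{0'}H_0=0$. By \eqref{eq:def:zAAprime} we have $z_0^{0'}=z^{10'}$ and $z_1^{0'}=-z^{00'}$, so
\[
|z_0^{0'}|^2+|z_1^{0'}|^2=x_0^2+x_1^2+x_2^2+x_3^2=|q_0|^2 .
\]
Hence $z_0^{0'}$ and $z_1^{0'}$ do not both vanish off $X$, which forces $H_0=0$ there.

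For the inductive step, assume $H_{s-1}=0$ with $s\leqslant p-1$. Then the equations become $(p-s)z_A^{0'}H_s=0$ for $A=0,1$. Since $p-s\geqslant1$, the same argument gives $H_s=0$ off $X$.

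The only point needing care is this index bookkeeping: one must confirm that the coefficient $p-s$ never vanishes in the range where $H_s$ is defined. If the indexing turns out to allow $s=p$, I would instead run the induction downward from $s=p$, using the $z_A^{1'}$ coefficients. This works since $|z_0^{1'}|^2+|z_1^{1'}|^2=|q_0|^2$ as well. Density and continuity then give $h\equiv0$ on $\Omega$, completing the proof.
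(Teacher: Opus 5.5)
Your proof is correct. For $0\leqslant j\leqslant k+1$ it is the paper's argument: the $2\times 2$ matrix with determinant $|q_0|^2$ from Step 1 of the case $x\notin X$. You also make explicit the density/continuity step across $X$, which the paper leaves implicit in both cases.

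For $j\geqslant k+3$ you take a different route.
\begin{itemize}
\item The paper pairs $(\eta(h)_{s-1},\eta(h)_s)$ and uses the $2\times2$ matrix with entries $\frac{s}{p}z_A^{1'}$ and $\frac{p-s}{p}z_A^{0'}$. Its determinant is $-\frac{s(p-s)}{p^2}|q_0|^2$, which is nonzero only for $1\leqslant s\leqslant p-1$.
\item You run a triangular induction starting from the $s=0$ equation. You use only the fact that $z_0^{0'}$ and $z_1^{0'}$ cannot vanish simultaneously off $X$, via $|z_0^{0'}|^2+|z_1^{0'}|^2=|q_0|^2$.
\end{itemize}

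Your version has a real advantage. When $j=k+3$ we have $p=1$ and $h$ is the single form $H_0$. Then the paper's range $s=1,\dots,p-1$ is empty, so its argument as written proves nothing in that case. Your base case covers it directly.

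Your index count is correct: $h$ has $p-1$ symmetric indices, so $H=(H_0,\dots,H_{p-1})$, and the $s=p$ equation involves only $H_{p-1}$. The fallback in your last paragraph is therefore not needed.
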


\begin{proof}
   Recall the definition of $\widetilde{\mathcal{L}}_j^{(k)}$ (see Definition \ref{def: L and tilde L}). The proof is divided into two cases.

\textbf{Case $0\leqslant j\leqslant k+1$.}

The argument follows from the proof of the injectivity of $\widetilde{\mathscr L}^{(k+2-j)}$ in the case $x\notin X$ given in Theorem \ref{thm:main}.

\textbf{Case $j > k+2$.}

Recall that \(V_{j}^{(k+1)} = \odot^{j-k-2}\mathbb{C}^2 \otimes \Lambda^{j+1}\mathbb{C}^{2n}\). Let \(p = j - k -2 > 0\).
Recall equation \eqref{eq:def widetilde mathcal L eta}. We obtain
\begin{align*}
\eta\big(\widetilde{\mathcal{L}}_j^{(k)}h\big)_{s}
&= \begin{pmatrix}
\frac{s}{p} z_1^{1'} (\eta (h))_{s-1} + \frac{p-s}{p} z_1^{0'} (\eta (h))_{s} \\[6pt]
-\frac{s}{p} z_0^{1'} (\eta (h))_{s-1} - \frac{p-s}{p} z_0^{0'} (\eta (h))_{s}
\end{pmatrix}.
\end{align*}

Assume that $\widetilde{\mathcal L}_j^{(k)}h=0$. It is clear that $\eta(\widetilde{\mathcal L}_j^{(k)}h)=0$. Then for any fixed $s=1,\ldots,p-1$, $\eta(h)_{s-1}=\eta(h)_s=0$ since the determinant of matrix
\[\begin{pmatrix}\frac{s}{p}  z_1^{1'} & \frac{p-s}{p}z_{1}^{0'}   \\
-\frac{s}{p}  z_0^{1'} &-\frac{p-s}{p}z_{0}^{0'}\end{pmatrix}\]
does not vanish on \(\Omega \cap \{q_0 \neq 0\}\).

Since \(\eta\) is an isomorphism, we conclude that \(h = 0\).
\end{proof}

\begin{remark}
 In general, the operator $\widetilde{\mathcal{L}}_{k+2}^{(k)}$ fails to be injective. Indeed, for any integer $k\geqslant 1$, one has
\[
\widetilde{\mathcal{L}}_{k+2}^{(k)}\big(\omega^0\wedge\omega^1\big)=0.
\]
\end{remark}

\begin{proposition}\label{prop: exactness of L and tilde L}
   Let $\Omega\subset\mathbb{H}^n$ be a domain. For every integer $j\leqslant k$, the identity $\mathcal{L}_j^{(k)}\widetilde{\mathcal{L}}_j^{(k)}=0$ holds. Furthermore, suppose $\Omega\cap\left\{q_0=0\right\}=\emptyset$ and $\mathcal{L}_j^{(k)} f=0$ for all $0\leqslant j \leqslant k$. Then there exists a function $g\in C^\infty\big(\Omega,V_j^{(k+2)}\big)$ satisfying $\widetilde{\mathcal{L}}_j^{(k)}g=f$. For the special case $j=k+1$, the operator $\widetilde{\mathcal{L}}_{k+1}^{(k)}$ is surjective.
\end{proposition}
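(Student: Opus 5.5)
The argument has three parts: the composition identity, exactness in the middle, and surjectivity at $j=k+1$. Everything is done pointwise in the $\eta$-picture. For $j\leqslant k$ the operators act only on the $\odot$-indices, with coefficients $z_A^{A'}$. So no derivatives appear, and the whole statement reduces to linear algebra with the $2\times 2$ block
\[
M=\begin{pmatrix}-z_1^{0'}&-z_1^{1'}\\ z_0^{0'}&z_0^{1'}\end{pmatrix},\qquad \det M=|q_0|^2,
\]
exactly as in Steps 1--2 of the case $x\notin X$ in the proof of Theorem~\ref{thm:main}.

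\textbf{Composition identity.} For $j\leqslant k$, equations \eqref{eq:LA and mathcal L} and \eqref{eq:LA and widetilde mathcal L} express $\mathcal{L}_j^{(k)}$ and $\widetilde{\mathcal{L}}_j^{(k)}$ through $L_0,L_1$ with the appropriate sign convention of Definition~\ref{def: L and tilde L}. Hence
\[
\mathcal{L}_j^{(k)}\widetilde{\mathcal{L}}_j^{(k)}h=-L_0L_1h+L_1L_0h=0
\]
by Proposition~\ref{prop: Lj Li commutative}. Directly in $\eta$-coordinates, the $s$-th component is
\[
z_0^{0'}(-z_1^{0'}H_s-z_1^{1'}H_{s+1})+z_0^{1'}(-z_1^{0'}H_{s+1}-z_1^{1'}H_{s+2})+z_1^{0'}(z_0^{0'}H_s+z_0^{1'}H_{s+1})+z_1^{1'}(z_0^{0'}H_{s+1}+z_0^{1'}H_{s+2}),
\]
which vanishes identically.

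\textbf{Exactness for $j\leqslant k$ when $q_0\neq0$.} Write $F=\eta(f)$, $G=\eta(g)$ with $f=(f_1,f_2)$. The hypothesis $\mathcal{L}_j^{(k)}(f_1,f_2)^T=0$ reads
\[
z_0^{0'}F_s+z_0^{1'}F_{s+1}+z_1^{0'}G_s+z_1^{1'}G_{s+1}=0
\]
for every $s$. I will copy Step 2 of the case $x\notin X$ verbatim, with coefficients $\Lambda^\ast$-valued. Define $H=\eta(g)$ by
\[
(H_s,H_{s+1})^T=|q_0|^{-2}M^{-1}(F_s,G_s)^T .
\]
The two resulting formulas for each interior $H_s$ agree precisely because of the hypothesis; this is identity \eqref{eq:thm main: x notin X:mid:2}. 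Every $H_s$ is then smooth on $\Omega$, since $|q_0|^2\neq0$ there, and $\widetilde{\mathcal{L}}_j^{(k)}g=f$ holds by construction. No regularity needs to be checked here, so Step 2 carries over with no extra work. The main care point is index bookkeeping: the top index $s+1=k-j+2$ and the sign conventions of the $j\leqslant k+1$ line of Definition~\ref{def: L and tilde L}.

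\textbf{Surjectivity of $\widetilde{\mathcal{L}}_{k+1}^{(k)}$.} Here $V_{k+1}^{(k+2)}\cong\mathbb{C}^2\otimes\Lambda^{k+1}$ and $V_{k+1}^{(k+1)}\cong\Lambda^{k+1}$. Thus $\widetilde{\mathcal{L}}_{k+1}^{(k)}$ is $h=(h_{0'},h_{1'})\mapsto M(h_{0'},h_{1'})^T$, and since $\det M=|q_0|^2$ it is invertible pointwise wherever $q_0\neq0$. The preimage is therefore $h=M^{-1}(f_1,f_2)^T$, which is smooth on $\Omega$. This step implicitly uses $\Omega\cap\{q_0=0\}=\emptyset$, which I will state explicitly.

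I do not expect any genuinely hard step. The only potential obstacle is confirming the consistency of the overdetermined definition of $H_s$, and the hypothesis $\mathcal{L}_j^{(k)}f=0$ gives exactly that.
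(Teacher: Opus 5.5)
Your proposal is correct and follows the paper's proof essentially line by line. It uses the same cancellation in $\eta$-coordinates for $\mathcal{L}_j^{(k)}\widetilde{\mathcal{L}}_j^{(k)}=0$, the same pointwise inversion of $M$ (the consistency of the doubly defined interior $H_s$ comes exactly from $\mathcal{L}_j^{(k)}(f_1,f_2)^T=0$), and the same pointwise invertibility of $\widetilde{\mathcal{L}}_{k+1}^{(k)}$ on $\Omega\cap\{q_0=0\}=\emptyset$.

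One small slip: $M^{-1}=|q_0|^{-2}\begin{pmatrix} z_0^{1'} & z_1^{1'}\\ -z_0^{0'} & -z_1^{0'}\end{pmatrix}$ already contains the factor $|q_0|^{-2}$. The definition should therefore read $(H_s,H_{s+1})^T=M^{-1}(F_s,G_s)^T$, as you correctly write in the $j=k+1$ step, not $|q_0|^{-2}M^{-1}(F_s,G_s)^T$.
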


\begin{proof}
First, consider the case where $0\leqslant j \leqslant k$.
Recall the definitions of $\widetilde{\mathcal L}_j^{(k)}$ and $\mathcal{L}_j^{(k)}$ (see Definition \ref{def: L and tilde L}).
Direct computation yields
\begin{align*}
\eta\big(\mathcal{L}_j^{(k)}\widetilde{\mathcal L}_j^{(k)}h\big)_s
&= z_0^{0^\prime}\eta(f)_{s}+z_0^{1^\prime}\eta(f)_{s+1}
+ z_1^{0^\prime}\eta(g)_{s}+z_1^{1^\prime}\eta(g)_{s+1},
\end{align*}
where the equalities
\begin{align*}
\eta(f)_{s}=-z_1^{0'} \eta(h)_{s}-z_1^{1'} \eta(h)_{s+1},\qquad
\eta(g)_{s}=z_0^{0'} \eta(h)_{s}+z_0^{1'} \eta(h)_{s+1}
\end{align*}\
holds for any $s=0,1,\ldots, k-j+1$.
Let $H:=\eta(h)$. We obtain
\begin{align*}
\eta\big(\mathcal{L}_j^{(k)}\widetilde{\mathcal L}_j^{(k)}h\big)_s
&= -z_0^{0^\prime}\big(z_1^{0'} H_{s}+z_1^{1'} H_{s+1}\big)
-z_0^{1^\prime}\big(z_1^{0'} H_{s+1}+z_1^{1'} H_{s+2}\big)\\
&\quad + z_1^{0^\prime}\big(z_0^{0'} H_{s}+z_0^{1'} H_{s+1}\big)
+z_1^{1^\prime}\big(z_0^{0'} H_{s+1}+z_0^{1'} H_{s+2}\big)=0.
\end{align*}

Assume that $\mathcal{L}_j^{(k)}\bigl(f,g\bigr)^T = 0$. It suffices to construct a function $h$ satisfying
\[
\bigl(f,g\bigr)^T = \widetilde{\mathcal{L}}_j^{(k)} h.
\]
Then $h$ must satisfy
\begin{equation*}
\begin{pmatrix}
-z_1^{0'} & -z_1^{1'}\\
z_0^{0'} & z_0^{1'}
\end{pmatrix}
\begin{pmatrix}
\eta(h)_s\\
\eta(h)_{s+1}
\end{pmatrix}=
\begin{pmatrix}
\eta(f)_s\\
\eta(g)_s
\end{pmatrix},
\end{equation*}
where $s=0,1,\dots,k-j+1$. Consequently, we define $\eta(h)$ by
\begin{equation*}
\begin{pmatrix}
\eta(h)_s\\
\eta(h)_{s+1}
\end{pmatrix}\coloneqq \frac{1}{\vert q_0\vert^2}
\begin{pmatrix}
z_0^{1'} & z_1^{1'}\\
-z_0^{0'} & -z_1^{0'}
\end{pmatrix}
\begin{pmatrix}
\eta(f)_s\\
\eta(g)_s
\end{pmatrix}.
\end{equation*}
Note that for each $s=1,\dots,k-j+1$, we have
\[
\eta(h)_s=\frac{1}{\vert q_0\vert^2}\big(z_0^{1'}\eta(f)_{s}+z_1^{1'}\eta(g)_s\big)
\]
and
\[
\eta(h)_s=\frac{1}{\vert q_0\vert^2}\big(-z_0^{0'}\eta(f)_{s-1}-z_1^{0'}\eta(g)_{s-1}\big).
\]
It remains to verify that
\[
z_0^{1'}\eta(f)_{s}+z_1^{1'}\eta(g)_s+z_0^{0'}\eta(f)_{s-1}+z_1^{0'}\eta(g)_{s-1}=0.
\]
This identity follows directly from the condition $\mathcal{L}_j^{(k)}\bigl(f,g\bigr)^T = 0$ together with equation \eqref{eq:def mathcal L eta}.

Consider the case $j=k+1$. In this case, the operator
\[
\widetilde{\mathcal{L}}_{k+1}^{(k)}:C^\infty\big(\Omega,V_{k+1}^{(k+2)}\big)\longrightarrow C^\infty\big(\Omega,V_{k+1}^{(k+1)}\oplus V_{k+1}^{(k+1)}\big)
\]
is defined by
\[
\widetilde{\mathcal{L}}_{k+1}^{(k)}h=
\begin{pmatrix}
-z_1^{0'} & -z_1^{1'}\\
z_0^{0'} & z_0^{1'}
\end{pmatrix}
\begin{pmatrix}
h_{0'}\\
h_{1'}
\end{pmatrix}.
\]
Hence, the surjectivity of $\widetilde{\mathcal L}_{k+1}^{(k)}$ follows from the argument for the case $j\leqslant k$. This completes the proof.
\end{proof}

\begin{proposition}\label{prop: surjective of L}
      Let $\Omega\subset\mathbb{H}^n$ be a domain such that $\Omega\cap\{q_0=0\}=\emptyset$. For each $0\leqslant j\leqslant k$, the operator $\mathcal{L}_j^{(k)}$ is surjective.
\end{proposition}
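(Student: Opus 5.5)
The operators $\mathcal{L}_j^{(k)}$ for $0\leqslant j\leqslant k$ act pointwise: they are multiplication by polynomial matrices and involve no derivatives. So I will prove surjectivity by exhibiting an explicit right inverse built from smooth coefficients, exactly as in the first half of Step 3 in the proof of Theorem~\ref{thm:main} (case $x\notin X$). No correction by a $w$ is needed here, because we only work with smooth forms and not with regular functions.

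Fix $0\leqslant j\leqslant k$ and put $p=k-j$. By \eqref{eq:LA and mathcal L}, we have $\mathcal{L}_j^{(k)}(f,g)^T=L_0^{(p+1)}f+L_1^{(p+1)}g$, where $f,g$ take values in $\odot^{p+1}\mathbb{C}^2\otimes\Lambda^j\mathbb{C}^{2n}$. By \eqref{eq:def mathcal L eta}, in the coordinates $F=\eta(f)$, $G=\eta(g)$ this reads
\[
\eta\bigl(\mathcal{L}_j^{(k)}(f,g)^T\bigr)_s=z_0^{0'}F_s+z_0^{1'}F_{s+1}+z_1^{0'}G_s+z_1^{1'}G_{s+1},\qquad s=0,\dots,p .
\]
Given $h\in C^\infty(\Omega,V_j^{(k)})$, set $H=\eta(h)$ and $H_{-1}=H_{p+1}=0$. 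Mirroring \eqref{eq:thm main: x notin X:last:def f g}, I define for $s=0,\dots,p+1$
\[
F_s=\frac{1}{2|q_0|^2}\bigl(-z_1^{0'}H_{s-1}+z_1^{1'}H_s\bigr),\qquad
G_s=\frac{1}{2|q_0|^2}\bigl(z_0^{0'}H_{s-1}-z_0^{1'}H_s\bigr).
\]
The coefficients $z_A^{A'}/|q_0|^2$ are smooth on $\Omega$, since $\Omega\cap\{q_0=0\}=\emptyset$. The $H_s$ are $\Lambda^j\mathbb{C}^{2n}$-valued, but the coefficients are scalar, so the construction applies componentwise. Hence $f:=\eta^{-1}(F)$ and $g:=\eta^{-1}(G)$ belong to $C^\infty(\Omega,V_j^{(k+1)})$.

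The remaining step is to verify that $\mathcal{L}_j^{(k)}(f,g)^T=h$. I will substitute the definitions and collect the terms in $H_{s-1}$, $H_s$ and $H_{s+1}$.
\begin{itemize}
\item The coefficient of $H_s$ equals $\bigl(z_0^{0'}z_1^{1'}-z_0^{1'}z_1^{0'}\bigr)+\bigl(z_0^{0'}z_1^{1'}-z_0^{1'}z_1^{0'}\bigr)$, divided by $2|q_0|^2$. Since $z_0^{0'}z_1^{1'}-z_0^{1'}z_1^{0'}=|q_0|^2$, as computed in Step 1, this coefficient is $1$.
\item The coefficient of $H_{s-1}$ is $-z_0^{0'}z_1^{0'}+z_1^{0'}z_0^{0'}=0$.
\item The coefficient of $H_{s+1}$ is $z_0^{1'}z_1^{1'}-z_1^{1'}z_0^{1'}=0$.
\end{itemize}
Equivalently, this is the matrix identity displayed in Step 3, after rewriting the conjugates via $\overline{z_0^{0'}}=z_1^{1'}$, $\overline{z_0^{1'}}=-z_1^{0'}$, and so on.

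The only delicate point is the boundary indices $s=0$ and $s=p$. There the conventions $H_{-1}=H_{p+1}=0$ must be checked to be consistent: $F_0$ and $G_0$ involve only $H_0$, and $F_{p+1}$ and $G_{p+1}$ involve only $H_p$. The verification above goes through unchanged at these indices, because the cancelling terms simply vanish. This completes the proof of surjectivity.
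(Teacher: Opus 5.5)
Your proposal is correct and is essentially the paper's own argument. The paper defines $\eta(f)$ and $\eta(g)$ as $\frac{1}{2|q_0|^2}$ times the conjugate transposes of the banded coefficient matrices applied to $\eta(h)$; after rewriting $\overline{z_0^{0'}}=z_1^{1'}$ and $\overline{z_0^{1'}}=-z_1^{0'}$, these are exactly your $F_s$, $G_s$. It then checks $\mathcal{L}_j^{(k)}(f,g)^T=h$ via the identity that the two matrix products sum to $2|q_0|^2\mathrm{Id}_{k-j+1}$, which is your coefficient computation written in matrix form.
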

\begin{proof}
   Recall the definitions of $\mathcal{L}_j^{(k)}$ (see Definition \ref{def: L and tilde L}) and equation \eqref{eq:def mathcal L eta}. We have
\begin{equation*}
\begin{aligned}
\begin{pmatrix}
\eta\big(\mathcal{L}_j^{(k)}\bigl(f,g\bigr)^T\big)_0 \\
\vdots \\
\eta\big(\mathcal{L}_j^{(k)}\bigl(f,g\bigr)^T\big)_{k-j}
\end{pmatrix}
&=
\begin{pmatrix}
z_0^{0'} & z_0^{1'}  &  & \\
& z_0^{0'} & z_0^{1'}  & \\
& & \ddots & \ddots
\end{pmatrix}
\begin{pmatrix}
\eta(f)_0 \\
\vdots \\
\eta(f)_{k-j+1}
\end{pmatrix}\\
&\quad +
\begin{pmatrix}
z_1^{0'} & z_1^{1'}  &  & \\
& z_1^{0'} & z_1^{1'}  & \\
& & \ddots & \ddots
\end{pmatrix}
\begin{pmatrix}
\eta(g)_0 \\
\vdots \\
\eta(g)_{k-j+1}
\end{pmatrix}.
\end{aligned}
\end{equation*}
From the definition of $z_A^{A'}$ given in \eqref{eq:def:zAAprime}, it follows that
\begin{equation*}
\begin{aligned}
&z_0^{0'}\overline{z_0^{0'}}+z_0^{1'}\overline{z_0^{1'}}+z_1^{0'}\overline{z_1^{0'}}+z_1^{1'}\overline{z_1^{1'}}=2|q_0|^2,\\
&z_0^{1'}\overline{z_0^{0'}}+z_1^{1'}\overline{z_1^{0'}}=0. \\
\end{aligned}
\end{equation*}
  This implies that
\begin{equation*}
\begin{aligned}
2|q_0|^2\mathrm{Id}_{k-j+1}
&=
\begin{pmatrix}
z_0^{0'} & z_0^{1'}  &  & \\
& z_0^{0'} & z_0^{1'}  & \\
& & \ddots & \ddots
\end{pmatrix}
\begin{pmatrix}
\overline{z_0^{0'}} & \overline{z_0^{1'}}  &  & \\
& \overline{z_0^{0'}} & \overline{z_0^{1'}}  & \\
& & \ddots & \ddots
\end{pmatrix}^{\!T} \\
&\quad +
\begin{pmatrix}
z_1^{0'} & z_1^{1'}  &  & \\
& z_1^{0'} & z_1^{1'}  & \\
& & \ddots & \ddots
\end{pmatrix}
\begin{pmatrix}
\overline{z_1^{0'}} & \overline{z_1^{1'}}  &  & \\
& \overline{z_1^{0'}} & \overline{z_1^{1'}}  & \\
& & \ddots & \ddots
\end{pmatrix}^{\!T}.
\end{aligned}
\end{equation*}

For each $h\in C^\infty(\Omega,V_j^{(k)})$, we may choose $f$ and $g$ such that
\begin{align*}
&\eta(f)=\frac{1}{2|q_0|^2}\begin{pmatrix}
\overline{z_0^{0'}} & \overline{z_0^{1'}}  &  & \\
& \overline{z_0^{0'}} & \overline{z_0^{1'}}  & \\
& & \ddots & \ddots
\end{pmatrix}^{\!T} \eta(h),\\
&\eta(g)=\frac{1}{2|q_0|^2}\begin{pmatrix}
\overline{z_1^{0'}} & \overline{z_1^{1'}}  &  & \\
& \overline{z_1^{0'}} & \overline{z_1^{1'}}  & \\
& & \ddots & \ddots
\end{pmatrix}^{\!T}\eta(h).
\end{align*}
From the above arguments, we conclude that $\mathcal{L}_j^{(k)}\bigl(f,g\bigr)^T=h$.
\end{proof}

As is well known in homological algebra \cite{Rotman09}, Proposition \ref{prop:DL and D tilde L commutative} induces two maps
\[
\mathcal{L}_j^{(k)}:H^j\big(\Omega,\mathcal{R}^{(k+1)}\oplus \mathcal{R}^{(k+1)}\big)\longrightarrow H^j\big(\Omega,\mathcal{R}^{(k)}\big)
\]
and
\[
\widetilde{\mathcal{L}}_j^{(k)}:H^j\big(\Omega,\mathcal{R}^{(k+2)}\big)\longrightarrow H^j\big(\Omega,\mathcal{R}^{(k+1)}\oplus \mathcal{R}^{(k+1)}\big).
\]
Likewise, the connecting homomorphisms can also be constructed for $j\leqslant k$.

\begin{proposition}\label{prop: connecting homomorphism s}
Let $\Omega\subset \mathbb{H}^n$ be a domain satisfying $\Omega\cap X=\emptyset$. For each integer $j\leqslant k$, there exists a well-defined homomorphism
\[
s^{(k)}_j:H^j\big(\Omega,\mathcal{R}^{(k)}\big)\longrightarrow H^{j+1}\big(\Omega,\mathcal{R}^{(k+2)}\big)
\]
given by
\begin{equation}\label{eq:def:snake map}
\operatorname{cls}\big(s_j^{(k)}f\big) \coloneqq \operatorname{cls}\Big(\big(\widetilde{\mathcal{L}}_{j+1}^{(k)}\big)^{-1}\Big(\mathscr{D}_j^{(k+1)}\big(\mathcal{L}_j^{(k)}\big)^{-1}f\Big)\Big).
\end{equation}
\end{proposition}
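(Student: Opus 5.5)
This is the standard snake-lemma construction applied to the short exact sequence of cochain complexes
\[
0\to C^\infty(\Omega,V_\bullet^{(k+2)})\xrightarrow{\widetilde{\mathcal L}_\bullet^{(k)}}C^\infty(\Omega,V_\bullet^{(k+1)})^{\oplus 2}\xrightarrow{\mathcal L_\bullet^{(k)}}C^\infty(\Omega,V_\bullet^{(k)})\to 0,
\]
restricted to the degrees where the needed facts are available. Proposition~\ref{prop:DL and D tilde L commutative} says the maps are chain maps. Proposition~\ref{prop: surjective of L} gives surjectivity of $\mathcal L_j^{(k)}$ for $j\leqslant k$, since $\Omega\cap X=\emptyset$. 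Proposition~\ref{prop: exactness of L and tilde L} gives exactness in the middle for $j\leqslant k$, together with surjectivity of $\widetilde{\mathcal L}_{k+1}^{(k)}$. Proposition~\ref{prop: injective of tilde L} gives injectivity of $\widetilde{\mathcal L}_{j+1}^{(k)}$ for $j+1\leqslant k+1$. The plan is to check, by hand and in these degrees only, each step of the usual diagram chase.

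\textbf{Existence.} Let $f\in\ker\mathscr D_j^{(k)}$ and choose $F$ with $\mathcal L_j^{(k)}F=f$. Then
\[
\mathcal L_{j+1}^{(k)}\mathscr D_j^{(k+1)}F=\mathscr D_j^{(k)}f=0 .
\]
For $j+1\leqslant k$, exactness from Proposition~\ref{prop: exactness of L and tilde L} gives a unique $h$ with $\widetilde{\mathcal L}_{j+1}^{(k)}h=\mathscr D_j^{(k+1)}F$. For $j=k$, we need exactness at degree $k+1$. There $\widetilde{\mathcal L}_{k+1}^{(k)}$ is a pointwise invertible $2\times2$ matrix acting on $(h_{0'},h_{1'})$, so it is surjective onto the whole direct sum, and such an $h$ exists with no condition required. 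Uniqueness of $h$ in all cases comes from Proposition~\ref{prop: injective of tilde L}. Next, applying the second identity of Proposition~\ref{prop:DL and D tilde L commutative} gives
\[
\widetilde{\mathcal L}_{j+2}^{(k)}\mathscr D_{j+1}^{(k+2)}h=\mathscr D_{j+1}^{(k+1)}\mathscr D_j^{(k+1)}F=0 .
\]
This shows $h$ is a cocycle, provided $\widetilde{\mathcal L}_{j+2}^{(k)}$ is injective. That holds whenever $j+2\neq k+2$, i.e.\ for $j<k$.

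\textbf{Main obstacle: the case $j=k$.} Here $\widetilde{\mathcal L}_{k+2}^{(k)}$ fails to be injective (see the remark above, with kernel containing $\omega^0\wedge\omega^1$). I would handle this directly. We have $h\in C^\infty(\Omega,V_{k+1}^{(k+2)})$, and $\mathscr D_{k+1}^{(k+2)}=d^{0'}(\cdot)_{0'}+d^{1'}(\cdot)_{1'}$ is first order. Solving explicitly,
\[
(h_{0'},h_{1'})^T=|q_0|^{-2}\begin{pmatrix} z_0^{1'} & z_1^{1'}\\ -z_0^{0'} & -z_1^{0'}\end{pmatrix}\mathscr D_k^{(k+1)}F .
\]
I would then compute $d^{0'}h_{0'}+d^{1'}h_{1'}$ by imitating the $k=0$ computation in Step~3 of the proof of Theorem~\ref{thm:main}. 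That computation uses $\mathscr D_k^{(k+1)}F=(d^{0'}F_{0'}+d^{1'}F_{1'})$ together with the condition $d^{0'}d^{1'}\mathcal L_k^{(k)}F=0$ (since $f\in\ker\mathscr D_k^{(k)}$), and the identity $d^{0'}d^{1'}|q_0|^{-2}=0$. The goal is to show the result vanishes. If a direct verification is messy, a fallback is to note that the kernel of $\widetilde{\mathcal L}_{k+2}^{(k)}$ consists only of multiples of $\omega^0\wedge\omega^1$ wedged with forms. One then checks that the remaining components of $\mathscr D_{k+1}^{(k+2)}h$ vanish by pairing with $\omega^0,\omega^1$.

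\textbf{Well-definedness.} Three choices must not affect the class. First, changing $F$ by an element of $\ker\mathcal L_j^{(k)}$: for $j\leqslant k$ this kernel is $\widetilde{\mathcal L}_j^{(k)}g$ by Proposition~\ref{prop: exactness of L and tilde L}. Then $\mathscr D_j^{(k+1)}\widetilde{\mathcal L}_j^{(k)}g=\widetilde{\mathcal L}_{j+1}^{(k)}\mathscr D_j^{(k+2)}g$, so $h$ changes by the coboundary $\mathscr D_j^{(k+2)}g$. Second, changing $f$ by $\mathscr D_{j-1}^{(k)}u$: lift $u=\mathcal L_{j-1}^{(k)}U$ and replace $F$ by $F+\mathscr D_{j-1}^{(k+1)}U$, which does not change $\mathscr D_j^{(k+1)}F$. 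Third, additivity is clear from linearity, so $s_j^{(k)}$ is a homomorphism.
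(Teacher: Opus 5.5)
\textbf{Gap at $j=k$.} Your overall plan matches the paper's: the standard connecting-map chase for $j\leqslant k-1$, and a direct computation at $j=k$, where $\widetilde{\mathcal L}_{k+2}^{(k)}$ is not injective. Your well-definedness argument at $j=k$ is correct. It gets lift independence from exactness at degree $k$ plus injectivity of $\widetilde{\mathcal L}_{k+1}^{(k)}$, and representative independence from surjectivity of $\mathcal L_{k-1}^{(k)}$. This is cleaner than the paper's explicit construction of $G\in C^\infty(\Omega,V_k^{(k+2)})$.

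The gap is the only non-routine step: you never show $\mathscr D_{k+1}^{(k+2)}h=0$ when $j=k$, and your fallback cannot work. $\widetilde{\mathcal L}_{k+2}^{(k)}$ is a first-order differential operator, not a pointwise one. Eliminating $d^{0'}u$ and $d^{1'}u$ between its two components gives
\[
\ker\widetilde{\mathcal L}_{k+2}^{(k)}=\bigl\{u:\ d^{0'}(|q_0|^2u)=d^{1'}(|q_0|^2u)=0\bigr\}.
\]
This uses $z_0^{0'}z_1^{1'}-z_0^{1'}z_1^{0'}=|q_0|^2$ and $d^{A'}|q_0|^2=-2(z_0^{A'}\omega^0+z_1^{A'}\omega^1)$. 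The kernel contains $|q_0|^{-2}c$ for every constant-coefficient form $c$, so for $n\geqslant 2$ it is far more than multiples of $\omega^0\wedge\omega^1$. It also contains $|q_0|^{-2}d^{0'}d^{1'}f$ for \emph{every} $f$.

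Since $\widetilde{\mathcal L}_{k+1}^{(k)}$ is bijective, $h$ exists for any lift $F$, whether or not $f$ is closed. The chain-map identity then places $\mathscr D_{k+1}^{(k+2)}h$ in this kernel regardless. So membership in the kernel carries no information, and the hypothesis $d^{0'}d^{1'}f=0$ has to be used explicitly.

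\textbf{How to close it.} Use your lift independence to take the specific lift from the proof of Proposition~\ref{prop: surjective of L}:
\[
F^{(A)}_{A'}=\overline{z_A^{A'}}\,f/(2|q_0|^2),\qquad A=0,1 .
\]
With this lift, the imitation of the $k=0$ computation in Step~3 becomes literal. Set $\phi=|q_0|^{-2}$. Lemma~\ref{lem:computation z} gives
\[
2h=\bigl(\phi\,d^{1'}f-d^{1'}\phi\wedge f,\ -\phi\,d^{0'}f+d^{0'}\phi\wedge f\bigr)^T .
\]
Since $d^{0'}$ and $d^{1'}$ are antiderivations,
\[
\mathscr D_{k+1}^{(k+2)}(2h)=d^{0'}(2h)_{0'}+d^{1'}(2h)_{1'}=2\phi\,d^{0'}d^{1'}f-2\,(d^{0'}d^{1'}\phi)\wedge f=2\phi\,d^{0'}d^{1'}f .
\]
The last equality holds because $d^{0'}d^{1'}\phi=(\Delta_{q_0}\phi)\,\omega^0\wedge\omega^1=0$. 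The result vanishes exactly because $\mathscr D_k^{(k)}f=d^{0'}d^{1'}f=0$, which is the paper's argument.
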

\begin{proof}
    It follows from the proof of Proposition 6.9 in \cite{Rotman09}, Proposition \ref{prop: injective of tilde L}, Proposition \ref{prop: exactness of L and tilde L}, and Proposition \ref{prop: surjective of L} that $s^{(k)}_j$ is well-defined for each integer $0\leqslant j\leqslant k-1$. For the case $j = k$, we observe that the operator $\widetilde{\mathcal L}_{k+2}^{(k)}$ is not injective. When attempting to prove that $\mathscr{D}^{(k+2)}_{k+1}s_k^{(k)}f = 0$ under the condition $\mathscr{D}_k^{(k)}f = 0$, we run into obstacles if we adopt the argument strategy used to establish Proposition 6.9 from \cite{Rotman09}.

    We verify via direct computation that $\mathscr{D}_k^{(k)}f = 0$ implies $\mathscr{D}^{(k+2)}_{k+1}s_k^{(k)}f = 0$. The argument follows the same line of reasoning as the proof of Theorem 1.2 in \cite{LZ26}. Note that $V_k^{(k)}\cong \Lambda^k\mathbb{C}^{2n}$ and $V_{k+1}^{(k+2)}\cong \mathbb{C}^2\otimes\Lambda^{k+1}\mathbb{C}^{2n}$. Let $f\in C^\infty(\Omega,\Lambda^k\mathbb{C}^{2n})$. Then the operator $s_k^{(k)}f$ is defined as follows:
\begin{align*}
2\left[\begin{pmatrix}
    (s_k^{(k)}f)_{0'}\\
    (s_k^{(k)}f)_{1'}
\end{pmatrix}\right]&=\left[\frac{1}{|q_0|^2}\begin{pmatrix}
z_0^{1'} & z_1^{1'}\\
-z_0^{0'} & -z_1^{0'}
\end{pmatrix}
\begin{pmatrix}
d^{0'}\!\left(\frac{\overline{z_0^{0'}}f}{|q_0|^2}\right)+d^{1'}\!\left(\frac{\overline{z_0^{1'}}f}{|q_0|^2}\right)\\
d^{0'}\!\left(\frac{\overline{z_1^{0'}}f}{|q_0|^2}\right)+d^{1'}\!\left(\frac{\overline{z_1^{1'}}f}{|q_0|^2}\right)
\end{pmatrix}\right].
\end{align*}
It follows from the definition of $z_A^{A'}$ given in \eqref{eq:z AAprime} and Lemma \ref{lem:computation z} that
\begin{equation*}
\begin{aligned}
d^{0'}\!\left(\frac{\overline{z_0^{0'}}f}{|q_0|^2}\right)+d^{1'}\!\left(\frac{\overline{z_0^{1'}}f}{|q_0|^2}\right)
&=\frac{\overline{z_0^{0'}}}{|q_0|^2}d^{0'}f+\frac{\overline{z_0^{1'}}}{|q_0|^2}d^{1'}f\\
&\quad -\frac{2}{|q_0|^2}\omega^0\wedge f,\\
d^{0'}\!\left(\frac{\overline{z_1^{0'}}f}{|q_0|^2}\right)+d^{1'}\!\left(\frac{\overline{z_1^{1'}}f}{|q_0|^2}\right)
&=\frac{\overline{z_1^{0'}}}{|q_0|^2}d^{0'}f+\frac{\overline{z_1^{1'}}}{|q_0|^2}d^{1'}f\\
&\quad -\frac{2}{|q_0|^2}\omega^1\wedge f.
\end{aligned}
\end{equation*}
  This implies that
\begin{align*}
&\quad  \frac{1}{|q_0|^2}
\begin{pmatrix}
z_0^{1'} & z_1^{1'}\\
-z_0^{0'} & -z_1^{0'}
\end{pmatrix}
\begin{pmatrix}
d^{0'}\!\left(\dfrac{\overline{z_0^{0'}}f}{|q_0|^2}\right)
+d^{1'}\!\left(\dfrac{\overline{z_0^{1'}}f}{|q_0|^2}\right)\\
d^{0'}\!\left(\dfrac{\overline{z_1^{0'}}f}{|q_0|^2}\right)
+d^{1'}\!\left(\dfrac{\overline{z_1^{1'}}f}{|q_0|^2}\right)
\end{pmatrix}\\
&=
\begin{pmatrix}
\frac{1}{|q_0|^2}d^{1'}f
-\frac{2z_0^{1'}}{|q_0|^4}\omega^0\wedge f
-\frac{2z_1^{1'}}{|q_0|^4}\omega^1\wedge f \\
-\frac{1}{|q_0|^2}d^{0'}f
+\frac{2z_0^{0'}}{|q_0|^4}\omega^0\wedge f
+\frac{2z_1^{0'}}{|q_0|^4}\omega^1\wedge f
\end{pmatrix}\\
&=
\begin{pmatrix}
\frac{1}{|q_0|^2}d^{1'}f
-\left(d^{1'}\frac{1}{|q_0|^2}\right)\wedge f \\
- \frac{1}{|q_0|^2}d^{0'}f
+\left(d^{0'}\frac{1}{|q_0|^2}\right)\wedge f
\end{pmatrix}.
\end{align*}
Assume that $d^{0'}d^{1'}f=0$. We aim to prove that $\mathscr{D}_{k+1}^{(k+2)}\big(s_k^{(k)}f\big)=0$. It is easy to verify that $d^{0'}d^{1'}\dfrac{1}{|q_0|^2}=0$. By direct computation, we have
\begin{align*}
\mathscr{D}_{k+1}^{(k+2)}\big(2s_k^{(k)}f\big)
&= d^{0'}\!\left(\frac{1}{|q_0|^2}d^{1'}f
-\left(d^{1'}\frac{1}{|q_0|^2}\right)\wedge f\right)
+ d^{1'}\!\left(- \frac{1}{|q_0|^2}d^{0'}f
+\left(d^{0'}\frac{1}{|q_0|^2}\right)\wedge f\right)\\
&= \frac{2}{|q_0|^2}d^{0'}d^{1'}f = 0.
\end{align*}

On the other hand, suppose $f=d^{0'}F_{0'}+d^{1'}F_{1'}$. Then
\begin{align*}
2\begin{pmatrix}
\big(s_k^{(k)}f\big)_{0'}\\
\big(s_k^{(k)}f\big)_{1'}
\end{pmatrix}&=\begin{pmatrix}
\dfrac{1}{|q_0|^2}d^{1'}d^{0'}F_{0'}
-\left(d^{1'}\dfrac{1}{|q_0|^2}\right)\wedge \big(d^{0'}F_{0'}+d^{1'}F_{1'}\big) \\
- \dfrac{1}{|q_0|^2}d^{0'}d^{1'}F_{1'}
+\left(d^{0'}\dfrac{1}{|q_0|^2}\right)\wedge \big(d^{0'}F_{0'}+d^{1'}F_{1'}\big)
\end{pmatrix}.
\end{align*}
Define $G\in C^{\infty}(\Omega,V_{k}^{(k+2)})$ by
\begin{align*}
G_{0'0'}&=2d^{1'}\frac{1}{|q_0|^2}\wedge F_{0'},\\
G_{0'1'}&=G_{1'0'}=\frac{1}{|q_0|^2}\big(d^{0'}F_{0'}-d^{1'}F_{1'}\big),\\
G_{1'1'}&=-2d^{0'}\frac{1}{|q_0|^2}\wedge F_{1'}.
\end{align*}
It then follows that $\mathscr{D}_{k}^{(k+2)}G=2s_k^{(k)}f$, which implies that the operator $s_k^{(k)}$ is well-defined. This completes the proof.
\end{proof}

\emph{Proof of Theorem \ref{thm:long exact sequence}.}
The proof of Theorem \ref{thm:long exact sequence} follows from Proposition \ref{prop:DL and D tilde L commutative}, Proposition \ref{prop: injective of tilde L}, Proposition \ref{prop: exactness of L and tilde L}, Proposition \ref{prop: surjective of L}, Proposition \ref{prop: connecting homomorphism s} and proof of  Theorem 6.10 in \cite{Rotman09}. \qed

\section{Applications}\label{section:Application}

In this section, we discuss several applications of our main theoretical results, including the surjectivity of the operator $\mathcal{L}_0^{(1)}$ in the case $n=1$ and conditions for the vanishing of cohomology groups.

\subsection{The surjectivity of operator $(L_0^{(1)},L_1^{(1)})$ in the case $n=1$}

In this section, we give a proof of Theorem \ref{thm:2}.

\emph{Proof of Theorem \ref{thm:2}.}
By the definition of ideal sheaf $\mathcal{I}_{\{0\}}^{(0)}$, we only need to prove that the map
\[(L_0^{(1)},L_1^{(1)}):H^0\big(\Omega,\mathcal{R}^{(1)}\oplus \mathcal{R}^{(1)}\big)\longrightarrow H^0(\Omega,\mathcal{I}_{\{0\}}^{(0)})\]
is surjective.

  As established in \cite{LZ26}, the identity $H^1(\Omega,\mathcal{R}^{(2)})=0$ holds if and only if $H^3(\Omega,\mathbb{R})=0$.
Therefore, it suffices to prove that $H^1(\Omega,\mathcal{R}^{(2)})=0$ is equivalent to the surjectivity of the map $(L_0^{(1)},L_1^{(1)})$.

From the vanishing result $H^1(\Omega,\mathcal{R}^{(1)})=0$ and Theorem \ref{thm:main}, we deduce the existence of a long exact sequence for cohomology groups:
\[
H^0\big(\Omega,\mathcal{R}^{(1)}\oplus \mathcal{R}^{(1)}\big)\stackrel{(L_0^{(1)},L_1^{(1)})}{\longrightarrow} H^0(\Omega,\mathcal{I}_{\{0\}}^{(0)}) \longrightarrow H^1(\Omega,\mathcal{R}^{(2)}) \longrightarrow 0.
\]
Consequently, the map $(L_0^{(1)},L_1^{(1)})$ is surjective if and only if $H^1(\Omega,\mathcal{R}^{(2)})=0$. This completes the proof. \qed

\subsection{Vanishing of cohomology groups and extension of $k$-regular functions}

In this section, we derive several sufficient conditions for extending $k$-regular functions defined on the subset $X\subset\mathbb{H}^{n-1}$ to the entire domain $\Omega\subset\mathbb{R}^{4n}$. As stated in Proposition \ref{prop:extension}, our analysis relies on results concerning the vanishing property of the first cohomology group $H^1(\Omega,\mathcal{I}^{(k)}_X)$.

Theorem \ref{thm:main} provides a sheaf-theoretic resolution of $\mathcal{I}_X^{(k)}$. To connect it with the differential operators $\mathscr{D}_j^{(k)}$ and the long exact sequence of Theorem \ref{thm:long exact sequence}, we need to relate sheaf cohomology to \v{C}ech cohomology.

We will recall some knowledge of \v{C}ech cohomology. The notation we adopt here refers to \cite{Demailly12}.  Let $\Omega\subset \mathbb{R}^{4n}$ be a domain, set $X=\{q_0=0\}\cap \Omega$, and let $\{U_\alpha\}$ form a convex open cover of $\Omega$. Let $\mathscr{A}$ be a sheaf over domain $\Omega$.
The group $C^{p}(\mathscr{U},\mathscr{A})$ of \v{C}ech $p$-cochains is defined as the collection of all families $c=(c_{\alpha_0\cdots\alpha_p})$ with
\[c_{\alpha_0\cdots \alpha_p}\in \prod_{\alpha_0,\dots,\alpha_p} \mathscr{A}(U_{\alpha_0\cdots \alpha_p}).\]
The \v{C}ech differential $\delta^p:C^{p}(\mathscr{U},\mathscr{A})\to C^{p+1}(\mathscr{U},\mathscr{A})$ is given by
\[(\delta^pc)_{\alpha_0\cdots \alpha_{p+1}}(x):=\sum_{0\leqslant j\leqslant p+1}(-1)^jc_{\alpha_0\cdots \widehat{\alpha_j}\cdots \alpha_{p+1}}.\]
We define the group $\check{H}^p(\mathscr{U},\mathscr{A})$ as
\[\check{H}^p(\mathscr{U},\mathscr{A}):=\frac{\ker \delta^p}{\operatorname{Im}\delta^{p-1}}.\]

\begin{lemma}\label{lem:Cech coho and usual coho}
  Let $\Omega\subset \mathbb{R}^{4n}$ be a domain, set $X=\{q_0=0\}\cap \Omega$, and let $\{U_\alpha\}$ form a convex open cover of $\Omega$. Then we have the isomorphisms
\[
\check{H}^p(\mathscr{U},\mathcal{R}^{(k)})\cong H^p(\Omega,\mathcal{R}^{(k)}), \qquad \check{H}^p(\mathscr{U},\mathcal{I}_X^{(k)})\cong H^p(\Omega,\mathcal{I}_X^{(k)})
\]
for $p\geqslant 0$.
\end{lemma}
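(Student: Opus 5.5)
The plan is to apply Leray's theorem: for a sheaf $\mathscr{A}$ and an open cover $\mathscr{U}$ such that $H^q(U_{\alpha_0\cdots\alpha_p},\mathscr{A})=0$ for all $q\geqslant 1$ and all finite intersections, the natural map $\check{H}^p(\mathscr{U},\mathscr{A})\to H^p(\Omega,\mathscr{A})$ is an isomorphism (see \cite{Demailly12}). Since intersections of convex sets are convex, every $U_{\alpha_0\cdots\alpha_p}$ is convex. It therefore suffices to prove that both $\mathcal{R}^{(k)}$ and $\mathcal{I}_X^{(k)}$ are acyclic on every convex open set $V\subset\Omega$.

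For $\mathcal{R}^{(k)}$, I will use the acyclic resolution \eqref{eq:resolution_of_k_CF}, which gives $H^j(V,\mathcal{R}^{(k)})\cong\ker\mathscr{D}_j^{(k)}/\operatorname{Im}\mathscr{D}_{j-1}^{(k)}$. I then invoke the vanishing of the $k$-Cauchy--Fueter complex cohomology on convex domains due to Wang \cite{Wang10}. This is the same fact already used in the paper when it wrote ``$U$ is convex and $H^1(U,\mathcal{R}^{(k+2)})=0$''. So $H^j(V,\mathcal{R}^{(k)})=0$ for $j\geqslant1$, and I will take this as known for all $j\geqslant 1$ and all $k$.

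For $\mathcal{I}_X^{(k)}$, I apply Theorem~\ref{thm:main} restricted to $V$. This gives the short exact sequence $0\to\mathcal{R}^{(k+2)}\to\mathcal{R}^{(k+1)}\oplus\mathcal{R}^{(k+1)}\to\mathcal{I}_X^{(k)}\to0$ on $V$, and the associated long exact sequence contains
\[
H^p\big(V,\mathcal{R}^{(k+1)}\oplus\mathcal{R}^{(k+1)}\big)\to H^p(V,\mathcal{I}_X^{(k)})\to H^{p+1}(V,\mathcal{R}^{(k+2)}).
\]
For $p\geqslant1$ both outer terms vanish by the previous step, so $H^p(V,\mathcal{I}_X^{(k)})=0$. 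This holds whether or not $V\cap X$ is empty; when it is empty, $\mathcal{I}_X^{(k)}=\mathcal{R}^{(k)}$ on $V$ and the claim is already covered. Leray's theorem then gives both isomorphisms, and the case $p=0$ is simply the sheaf axiom.

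The main obstacle is the first step: justifying $H^j(V,\mathcal{R}^{(k)})=0$ for every $j\geqslant1$ on arbitrary convex (possibly unbounded) $V$, rather than only for $j=1$. I would cite Wang's Poincaré-type lemma for the $k$-Cauchy--Fueter complex on convex domains, including the degrees $j\geqslant k$ where $\mathscr{D}_k^{(k)}=d^{0'}d^{1'}$ is second order. If only star-shaped local solvability is available, I would fall back on the standard exhaustion argument of Hörmander type, using the analogue of Runge approximation for $k$-regular functions.
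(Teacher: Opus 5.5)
Your proposal is correct and is essentially the paper's own proof. Both arguments apply Leray's theorem to the convex cover, whose finite intersections are convex. Both use Wang's vanishing $H^p(U,\mathcal{R}^{(k)})=0$ for $p\geqslant 1$ on convex $U$. Both then take the segment $H^p(U,\mathcal{R}^{(k+1)}\oplus\mathcal{R}^{(k+1)})\to H^p(U,\mathcal{I}_X^{(k)})\to H^{p+1}(U,\mathcal{R}^{(k+2)})$ of the long exact sequence from Theorem~\ref{thm:main} to get acyclicity of $\mathcal{I}_X^{(k)}$. The paper simply cites \cite{Wang10} for the vanishing in all degrees $p\geqslant 1$, so your Runge-type fallback is not needed.
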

\begin{proof}
The isomorphism is valid in the case $p=0$, since we have
\[
\check{H}^0(\mathscr{U},\mathcal{R}^{(k)})\cong H^0(\Omega,\mathcal{R}^{(k)})\cong \mathcal{R}^{(k)}(\Omega), \qquad \check{H}^0(\mathscr{U},\mathcal{I}_X^{(k)})\cong H^0(\Omega,\mathcal{I}_X^{(k)})\cong\mathcal{I}_X^{(k)}(\Omega).
\]
We now turn to the case where $p\geqslant 1$. Recall from \cite{Wang10} that for any convex open set $U$, we have $H^p(U,\mathcal{R}^{(k)}) = 0$ for $p\geqslant 1$. The isomorphism $\check{H}^p(\mathscr{U},\mathcal{R}^{(k)}) \cong H^p(\Omega,\mathcal{R}^{(k)})$ follows directly from Leray's isomorphism theorem, see Theorem 5.17 in \cite{Demailly12}.

To establish $\check{H}^p(\mathscr{U},\mathcal{I}_X^{(k)}) \cong H^p(\Omega,\mathcal{I}_X^{(k)})$, it suffices by Leray's isomorphism to verify $H^p(U,\mathcal{I}_X^{(k)}) = 0$ for all convex open sets $U$.

According to Theorem \ref{thm:main}, we have the following exact sequence:
\[H^p(U,\mathcal{R}^{(k+1)}\oplus\mathcal{R}^{(k+1)})\longrightarrow H^p(U,\mathcal{I}_X^{(k)})\longrightarrow H^{p+1}(U,\mathcal{R}^{(k+2)}).\]
As stated in \cite{Wang10}, for any $p\geqslant1$, we have
\[H^p(U,\mathcal{R}^{(k+1)}\oplus\mathcal{R}^{(k+1)})=0,\qquad H^{p+1}(U,\mathcal{R}^{(k+2)})=0.\]
Combining the above facts, we conclude $H^p(U,\mathcal{I}_X^{(k)})=0$.
\end{proof}

We now introduce the Leray isomorphism
\begin{equation*}
\upsilon^{p,(k)}:H^p(\Omega,\mathcal{R}^{(k)})\longrightarrow \check{H}^p(\mathscr{U},\mathcal{R}^{(k)}).
\end{equation*}
For details concerning the Leray isomorphism, we refer to Theorem 5.17 in \cite{Demailly12}.

Consider the groups $K^{p,q}(\mathscr{U},V_q^{(k)})$ defined by
\[K^{p,q}(\mathscr{U},V_q^{(k)}):=\prod_{\alpha_0,\dots,\alpha_p} C^\infty(U_{\alpha_0\cdots \alpha_p},V_q^{(k)}).\]
We have two differentials $\delta^p\colon K^{p,q}\to K^{p+1,q}$ and $\mathscr{D}_q^{(k)}\colon K^{p,q}\to K^{p,q+1}$.

Note that any finite intersection $U_{\alpha\beta\cdots}:=U_\alpha\cap U_\beta\cap\cdots$ is also an open convex set, and the cohomology groups $H^q(U_{\alpha\beta\cdots},\mathcal{R}^{(k)})$ vanish for all $q\geqslant 1$ \cite{Wang10}. Hence, for any function $f$ satisfying $\mathscr{D}_{q+1}^{(k)}f=0$ on $U_{\alpha_0\cdots\alpha_p}$, there exists a function $F$ defined on $U_{\alpha_0\cdots\alpha_p}$ such that $\mathscr{D}_q^{(k)}F=f$. We denote such $F$ as $(\mathscr{D}_q^{(k)})^{-1}f$.

Let $[f]\in H^{p}(\Omega,\mathcal{R}^{(k)})$ and take $f$ to be a representative of the equivalence class $[f]$. Then we may define a section $F\in \prod_{\alpha}C^\infty(U_\alpha,V_p^{(k)})$ satisfying
\[F_\alpha=f\big|_{U_\alpha}.\]
The operator $\upsilon^{p,(k)}$ is defined by
\begin{equation}\label{eq:def:upsilon pk}
(\upsilon^{p,(k)}f)_{\alpha_0\cdots\alpha_p}:=(\delta^{p}\circ(\mathscr{D}_{0}^{(k)})^{-1})\circ\cdots\circ(\delta^2\circ(\mathscr{D}_{p-2}^{(k)})^{-1})\circ(\delta^1\circ(\mathscr{D}_{p-1}^{(k)})^{-1})F,
\end{equation}
where the inverses are well-defined by the vanishing of higher cohomology on convex sets.

We explicitly present the expressions of $\upsilon^{p,(k)}$ for the special cases where $p=1,2$.

Let $f\in C^{\infty}(\Omega,V_1^{(k)})$ satisfy the equation $\mathscr{D}_1^{(k)}f=0$. Then on each open set $U_\alpha$, there exists a function $f_\alpha\in C^\infty(U_\alpha,V_0^{(k)})$ such that $\mathscr{D}_0^{(k)}f_\alpha=f$. The element $\upsilon^{1,(k)}f$ is then defined as a section on $\prod_{\alpha\beta}\mathcal{R}^{(k)}(U_{\alpha\beta})$ via
\begin{equation*}
(\upsilon^{1,(k)}f)_{\alpha\beta}:=f_\beta-f_\alpha.
\end{equation*}

Let $f\in C^{\infty}(\Omega,V_2^{(k)})$ satisfy the equation $\mathscr{D}_2^{(k)}f=0$. Then for each open set $U_\alpha$, there exists a function $f_\alpha\in C^\infty(U_\alpha,V_1^{(k)})$ such that $\mathscr{D}_1^{(k)}f_\alpha=f$. The expression $\delta^1(f_\alpha)$ is then regarded as an element of $\prod_{\alpha\beta}C^\infty(U_{\alpha\beta},V_1^{(k)})$ defined by
\[
(\delta^1(f_\alpha))_{{\alpha\beta}}:=f_\beta-f_\alpha.
\]
Notice that $\mathscr{D}_1^{(k)}(f_\beta-f_\alpha)=0$. Hence there exists $F_{\alpha\beta}$ satisfying $\mathscr{D}_0^{(k)}F_{\alpha\beta}=f_\beta-f_\alpha$.
The element $\upsilon^{2,(k)}f$ is subsequently defined as a section over $\prod_{\alpha\beta\gamma}\mathcal{R}^{(k)}(U_{\alpha\beta\gamma})$ via
\begin{equation*}
\big(\upsilon^{2,(k)}f\big)_{\alpha\beta\gamma}:=F_{\beta\gamma}-F_{\alpha\gamma}+F_{\alpha\beta}.
\end{equation*}

Note that Proposition \ref{prop:DL and D tilde L commutative} gives rise to a map
\[
\widetilde{\mathscr{L}}^{(k)}_p:\prod_{\alpha_0,\ldots,\alpha_p}C^\infty(U_{\alpha_0\ldots\alpha_p},\mathcal{R}^{(k+2)})\longrightarrow \prod_{\alpha_0,\ldots,\alpha_p}C^\infty(U_{\alpha_0\ldots\alpha_p},\,\mathcal{R}^{(k+1)}\oplus\mathcal{R}^{(k+1)})
\]
defined by
\[
\widetilde{\mathscr{L}}^{(k)}_p f_{\alpha_0\cdots\alpha_p}
:=(\widetilde{\mathcal L}_0^{(k)}f)_{\alpha_0\cdots\alpha_p}=
\bigl(-L_1 f_{\alpha_0\cdots\alpha_p},\, L_0 f_{\alpha_0\cdots\alpha_p}\bigr)^T.
\]
We then have the following lemma concerning the connection between the map
\[
\widetilde{\mathscr{L}}^{(k)}_p:\check{H}^{p}(\mathscr{U},\mathcal{R}^{(k+2)})\longrightarrow \check{H}^{p}\big(\mathscr{U},\mathcal{R}^{(k+1)}\oplus \mathcal{R}^{(k+1)}\big)
\]
and
\[
\widetilde{\mathcal{L}}^{(k)}_p:H^{p}(\Omega,\mathcal{R}^{(k+2)})\longrightarrow H^{p}(\Omega,\mathcal{R}^{(k+1)}\oplus \mathcal{R}^{(k+1)}).
\]

\begin{lemma}\label{lem:connection between tilde L and msthscr L}
    Let $n\geqslant 1$, $k,p\geqslant 0$ and $\Omega\subset\mathbb{R}^{4n}$ be a domain. Then we have
    \begin{enumerate}
        \item We have
        \[\upsilon^{p,(k+1)}\widetilde{\mathcal L}^{(k)}_p=\widetilde{\mathscr L}^{(k)}_p\upsilon^{p,(k+2)}.\]
        \item If $\widetilde{\mathcal{L}}^{(k)}_p$ is surjective, then $\widetilde{\mathscr{L}}^{(k)}_p$ is surjective.
        \item If $\widetilde{\mathcal{L}}^{(k)}_p$ is injective, then $\widetilde{\mathscr{L}}^{(k)}_p$ is injective.
    \end{enumerate}
\end{lemma}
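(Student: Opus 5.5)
The plan is to prove (1) first, at the level of cochains, by induction on the construction \eqref{eq:def:upsilon pk} of $\upsilon^{p,(k)}$; parts (2) and (3) then follow formally, because the $\upsilon$'s are isomorphisms by Lemma~\ref{lem:Cech coho and usual coho}.

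For (1), fix a closed representative $f\in C^\infty(\Omega,V_p^{(k+2)})$ with $\mathscr{D}_p^{(k+2)}f=0$. Build $\upsilon^{p,(k+2)}f$ as usual. On each $U_\alpha$ choose $f_\alpha$ with $\mathscr{D}_{p-1}^{(k+2)}f_\alpha=f$. Apply $\delta^1$, choose primitives again on the double intersections, and continue in this way until a \v{C}ech $p$-cocycle with values in $\mathcal{R}^{(k+2)}$ is reached.

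Now apply $\widetilde{\mathcal L}^{(k)}_{j}$ to every intermediate primitive in degree $j$. By Proposition~\ref{prop:DL and D tilde L commutative},
\[
\mathscr{D}_{p-1}^{(k+1)}\widetilde{\mathcal L}^{(k)}_{p-1}f_\alpha=\widetilde{\mathcal L}^{(k)}_{p}\mathscr{D}_{p-1}^{(k+2)}f_\alpha=\widetilde{\mathcal L}^{(k)}_p f .
\]
So the family $\widetilde{\mathcal L}^{(k)}_{p-1}f_\alpha$ is an admissible choice of primitives for computing $\upsilon^{p,(k+1)}\widetilde{\mathcal L}^{(k)}_pf$. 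Moreover, $\widetilde{\mathcal L}^{(k)}_j$ is pointwise linear with coefficients that are global polynomials in $z$, so it commutes with the \v{C}ech differential $\delta$.

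Iterating these two facts down to degree $0$ shows that $\upsilon^{p,(k+1)}(\widetilde{\mathcal L}^{(k)}_pf)$ is represented by the cocycle $\widetilde{\mathcal L}^{(k)}_0(\upsilon^{p,(k+2)}f)=\widetilde{\mathscr L}^{(k)}_p\upsilon^{p,(k+2)}f$. The identity in cohomology then uses the standard fact that $\upsilon$ is independent of the choices of primitives: two choices differ by a \v{C}ech coboundary. This fact is part of the Leray isomorphism (Theorem 5.17 in \cite{Demailly12}).

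The main obstacle is the bookkeeping in degrees $j=k+1,k+2$ when $p$ is that large. There $\widetilde{\mathcal L}^{(k)}_{k+2}$ involves $d^{0'},d^{1'}$, so it is a differential operator rather than a multiplication. It still commutes with $\delta$, since $\delta$ only restricts and subtracts. Only the commutation with $\mathscr{D}$ is needed, and Proposition~\ref{prop:DL and D tilde L commutative} supplies it for all $j$. I will check explicitly that no step requires injectivity of $\widetilde{\mathcal L}_{k+2}^{(k)}$.

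For (2) and (3): by (1), $\widetilde{\mathscr L}^{(k)}_p=\upsilon^{p,(k+1)}\,\widetilde{\mathcal L}^{(k)}_p\,(\upsilon^{p,(k+2)})^{-1}$ as maps on cohomology, since both $\upsilon$'s are isomorphisms (Lemma~\ref{lem:Cech coho and usual coho} and Leray). A composite of isomorphisms with a surjective (respectively injective) map is surjective (respectively injective), which gives (2) and (3).
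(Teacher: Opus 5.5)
Your proposal is correct and follows the paper's proof essentially step for step. You push $\widetilde{\mathcal L}^{(k)}_j$ through each stage of the zig-zag that defines $\upsilon$: Proposition~\ref{prop:DL and D tilde L commutative} commutes it with $\mathscr{D}$, and locality commutes it with $\delta$. You then use that $\upsilon$ does not depend on the choice of primitives. Your derivation of (2) and (3) by conjugating with the Leray isomorphisms spells out what the paper leaves implicit when it says they ``follow as corollaries of (1).''
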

\begin{proof}
   We first prove assertion (1), since (2) and (3) follow as corollaries of (1).

From the definition of $\widetilde{\mathcal L}_p^{(k)}$ (see Definition \ref{def: L and tilde L}) and Proposition \ref{prop:DL and D tilde L commutative}, we know the following identities hold for all
\[f\in \prod_{\alpha_0,\ldots,\alpha_q}C^{\infty}\big(U_{\alpha_0\ldots\alpha_q},V_{p-1}^{(k+2)}\big):\]
\[
\mathscr{D}_{p-1}^{(k+1)}\widetilde{\mathcal L}_{p-1}^{(k)}f=\widetilde{\mathcal L}_{p}^{(k)}\mathscr{D}_{p-1}^{(k+2)}f, \qquad \delta^q\widetilde{\mathcal L}_{p-1}^{(k)}f=\widetilde{\mathcal L}_{p-1}^{(k)}\delta^q f.
\]

Notice that $\widetilde{\mathcal L}_p^{(k)}$, $\widetilde{\mathscr L}_p^{(k)}$ and $\upsilon^{p,(k)}$ are morphisms between cohomology groups, and their formulas are independent of the choice of representatives of equivalence classes. Thus we may select a representative of an equivalence class for computation.
Let $f\in H^p(\Omega,\mathcal{R}^{(k+2)})$ and $F\in\prod_{\alpha}C^\infty(U_\alpha,V_p^{(k+2)})$ satisfy $F_\alpha=f\big|_{U_\alpha}$. Then we obtain
\begin{align*}
(\delta^1\circ(\mathscr{D}_{p-1}^{(k+1)})^{-1})\widetilde{\mathcal L}^{(k)}_pF&=
(\delta^1\circ(\mathscr{D}_{p-1}^{(k+1)})^{-1})\widetilde{\mathcal L}^{(k)}_p\mathscr{D}_{p-1}^{(k+2)}(\mathscr{D}_{p-1}^{(k+2)})^{-1}F\\
&=\widetilde{\mathcal L}^{(k)}_{p-1}\circ\big(\delta^1\circ (\mathscr{D}_{p-1}^{(k+2)})^{-1}\big)F.
\end{align*}
Repeating the above argument yields
\begin{align*}
\big(\upsilon^{p,(k+1)}\widetilde{\mathcal L}_p^{(k)}f\big)_{\alpha_0\cdots\alpha_p}&=\widetilde{\mathcal L}^{(k)}_{0}\circ \big(\delta^p\circ (\mathscr{D}_{0}^{(k+2)})^{-1}\big)F\circ \cdots\circ \big(\delta^1\circ (\mathscr{D}_{p-1}^{(k+2)})^{-1}\big)F\\
&=\big(\widetilde{\mathscr{L}}^{(k)}_p\upsilon^{p,(k+2)}f\big)_{\alpha_0\cdots\alpha_p}.
\end{align*}
This finishes the proof.
    \end{proof}

Now we present a proof for Theorem \ref{thm:main2}.

\emph{Proof of Theorem \ref{thm:main2}.}

The assertion (2) is a direct corollary of Theorem \ref{thm:long exact sequence}. It suffices to prove assertion (1).

Let $\Omega\subset \mathbb{R}^{4n}$ be a domain, and define $X=\{q_0=0\}\cap \Omega$. Suppose $\{U_\alpha\}$ is a convex open cover of $\Omega$.

It follows from Lemma \ref{lem:Cech coho and usual coho} that
\[H^1(\Omega,\mathcal{I}_X^{(k)})\cong \check{H}^1(\mathscr{U},\mathcal{I}_X^{(k)}).\]
By Theorem \ref{thm:main}, we obtain the following long exact sequence of cohomology groups:
\begin{equation*}
\begin{aligned}
&\check{H}^{1}\big(\mathscr{U},\mathcal{R}^{(k+2)}\big)\xrightarrow{\widetilde{\mathscr{L}}^{(k)}_1} \check{H}^{1}\big(\mathscr{U},\mathcal{R}^{(k+1)}\oplus\mathcal{R}^{(k+1)}\big)\longrightarrow \check{H}^1\big(\mathscr{U},\mathcal{I}_X^{(k)}\big)\longrightarrow \\
&\check{H}^{2}\big(\mathscr{U},\mathcal{R}^{(k+2)}\big)\xrightarrow{\widetilde{\mathscr{L}}^{(k)}_2} \check{H}^{2}\big(\mathscr{U},\mathcal{R}^{(k+1)}\oplus\mathcal{R}^{(k+1)}\big).
\end{aligned}
\end{equation*}
Consequently, the cohomology group
\[
\check{H}^1\big(\mathscr{U},\mathcal{I}_X^{(k)}\big)
\]
vanishes if and only if the map $\widetilde{\mathscr{L}}^{(k)}_1$ is surjective and $\widetilde{\mathscr{L}}^{(k)}_2$ is injective. Combining this with Lemma \ref{lem:connection between tilde L and msthscr L}, we arrive at the desired conclusion. \qed

 Corollaries \ref{cor:1} and \ref{cor:2} follow directly from Theorem \ref{thm:main}, Theorem \ref{thm:main2} and Proposition \ref{prop:extension}.

\begin{remark}
   We must point out that Corollary \ref{cor:2} provides a sufficient but not necessary condition for extending $k$-regular functions from $X$ to $\Omega$. Let us consider the case where $k=n=1$. In this setting, every constant function can be trivially extended from the single point $0$ to the entire space $\Omega$. However, as noted in \cite{LZ26}, $H^1(\Omega,\mathcal{R}^{(2)})=0$ if and only if $H^3(\Omega,\mathbb{R})=0$.
\end{remark}

\end{document}